\newcolumntype{R}[1]{>{\raggedleft\arraybackslash }b{#1}}
\newcolumntype{L}[1]{>{\raggedright\arraybackslash }b{#1}}
\newcolumntype{C}[1]{>{\centering\arraybackslash }b{#1}}
\newtheorem{thm}{Theorem}[section]
\newtheorem{Def}[thm]{Definition}
\newtheorem{prop}[thm]{Proposition}
\newtheorem{lem}[thm]{Lemma}
\newtheorem{cor}[thm]{Corollary}
\newtheorem{rem}[thm]{Remark}
\newcommand{\R}{\mathbb{R}}
\newcommand{\N}{\mathbb{N}}
\newcommand{\E}{\mathbb{E}}
\newcommand{\inte}{\int_0^t}
\newcommand{\e}{\eta}
\newcommand{\LL}{\mathcal{L}}
\newcommand{\lims}{\underset{N\rightarrow \infty}{\overline{\lim}}}
\newcommand{\nurhon}{\nu_{\rho}^N}
\newcommand{\sumn}{\sum_{x=1}^{N-1}}
\numberwithin{equation}{section}
\title{\textbf{Dynamic and static large deviations of a one
dimensional SSEP in weak contact with reservoirs }} \date{}
\newsavebox\affbox
\author[1]{\text{Claudio Landim}}
\author[2]{\text{Sonia Velasco}}
\affil[1] {IMPA, \textit{Estrada Dona Castorina 110, J. Botanico, 22460 Rio de Janeiro, Brazil.}
CNRS UMR 6085, \textit{Université de Rouen, 76128 Mont-Saint-Aignan Cedex, France.}}
\affil[2]{ Laboratoire MAP5, \textit{ Université Paris Cité, 45 rue des Saints-Pères, 75270 Paris Cedex 06, France.}
}
\begin{document}

\maketitle

\begin{abstract} 
We derive a formula for the quasi-potential of one-dimensional
symmetric exclusion process in weak contact with reservoirs. The
interaction with the boundary is so weak that, in the diffusive scale,
the density profile evolves as the one of the exclusion process with
reflecting boundary conditions. In order to observe an evolution of
the total mass, the process has to be observed in a longer time-scale,
in which the density profile becomes immediately constant. 
\end{abstract}

\let\thefootnote\relax\footnotetext{\noindent \small $^{*}$\textbf{
Corresponding author: sonia.velasco@parisdescartes.fr}}

\noindent \textbf{\textit{Keywords}}: \textit{Boundary driven
exclusion process, Large deviations, quasi-potential, Macroscopic
fluctuation theory}

\noindent  \textbf{MSC2020}: \textit{60K35, 82C22, 60F10}

\section{Introduction}

The investigation of the nonequilibrium stationary states of
interacting particle systems in contact with reservoirs has attracted
a lot of attention in the last years \cite{BDGJL, DLS2002, BDGJL2015,
DHS2021, BEL21, cfggt}.

In this article, we examine the one-dimensional symmetric exclusion
process in weak contact with reservoirs. The interaction of the system
with the reservoirs is so weak that the hydrodynamic equation describing the macroscopic time evolution of the empirical density is
the heat equation with Neumann boundary conditions, the equation which
represents the density evolution of the exclusion process with
reflecting boundary conditions.

The total mass changes in a much longer macroscopic time-scale. In
this very long time-scale, the empirical density immediately reaches 
the stationary profile of the heat equation with Neumann boundary
conditions, that is, a constant profile with a certain time-dependent
value.

There are therefore two important time-scales. In the first one, the
density profile evolves according to the heat equation with Neumann
boundary conditions and converges, as time increases,
to a constant density profile without modifying its total mass. In the second much longer time-scale, the constant density profile evolves smoothly, modifying the total mass according to an ODE, until it reaches the
value determined by the interactions of the system with the boundary.

This picture extends to the dynamical large deviations.  Denote by
$K_{[0,T]} (u)$ the cost of observing a trajectory $u(t)$ in the time
interval $[0,T]$. For each $t$, $u(t)$ is a density profile.  Assume
that the total mass of $u(t)$ is constant in time
($\int_K u(t,x) dx = c$). As the interaction with the boundary is
small, this cost coincides with that of an exclusion process with
reflection at the boundary \cite{GDnoncritical}. Denote the later cost
by $K^{\rm Neu}_{[0,T]} (u)$, so that
$K_{[0,T]} (u) = K^{\rm Neu}_{[0,T]} (u)$.

To consider the large deviations of the total mass, observe the system
in the longer time-scale at which the total mass evolves. Fix a
trajectory $a\colon [0,T'] \to [0, M]$, where $M$ represents the
length (or volume) of the interval where particles are interacting,
and denote by $I_{[0,T']} (a)$ the cost of observing a trajectory
$u(t)$, $0\le t\le T'$, whose total mass at time $t$ is equal to
$a(t)$. Theorem \ref{DynamicGD} below states the dynamical large
deviations principle for the total mass and provides a formula for
$I _{[0,T']} (a)$.

We turn to the quasi-potential. The existence of two time-scales creates
an obstruction in its derivation.  Denote by $\bar\rho_m$,
$0\le m\le M$, the constant density profile with total mass equal to
$m$.  Let $V^{\rm Neu}_m (\cdot)$ be the quasi-potential associated to
the exclusion dynamics with reflection at the boundary: for a density
profile $\rho$ whose total mass is equal to $m$,
$V^{\rm Neu}_m (\rho) = \inf_{T>0} \inf_u K^{\rm Neu}_{[0,T]} (u)$,
where the second infimum is carried over all trajectories $u$ with
time-independent total mass and such that $u(0) = \bar\rho_m$,
$u(T) = \rho$.

Fix a density profile $\rho$ whose total mass is equal to $m$, and
let $u^{\rm Neu}_\rho$ be the relaxation trajectory, that is, the
trajectory which describes the typical evolution of the density
profile when the initial condition is $\rho$. As this evolution
corresponds to the solution of the heat equation with Neumann boundary
conditions, $u^{\rm Neu}_\rho (t) \to \bar\rho_m$ as
$t\to\infty$. Let $u^{\rm R, Neu}_\rho$ be the time-reflected
trajectory: $u^{\rm R, Neu}_\rho (t) = u^{\rm Neu}_\rho (-t) $.
As the exclusion process with reflecting boundary conditions is
reversible,
$V^{\rm Neu}_m (\rho) = K^{\rm Neu}_{(-\infty, 0]} (u^{\rm R,
Neu}_\rho)$.

Since the dynamical large deviations rate functional of the exclusion
process with weak interaction at the boundary coincides with the one
with reflection at the boundary, the quasi-potentials also coincide. The infimum is therefore reached at the time-reversed relaxation
trajectory: for any density profile $\rho$ with density $m$,
\begin{equation*}
V_m (\rho) \;:=\;
\inf_{T>0} \inf_u K_{[0,T]} (u) \;=\;
\inf_{T>0} \inf_u K^{\rm Neu}_{[0,T]} (u) \;=\;
K^{\rm Neu}_{(-\infty, 0]} (u^{\rm R, Neu}_\rho)
\;=\; V^{\rm Neu}_m (\rho)\;.
\end{equation*}

Moreover, as the stationary states of the exclusion process with
reflecting boundary conditions are the uniform measures with fixed
total number of particles,
\begin{equation*}
V_m (\rho) \;:=\;
V^{\rm Neu}_m (\rho) \;=\; \int_K \Big\{\, \rho (x)
\,\log \frac{\rho(x)}{m} \,+\,
[1-\rho (x)]
\,\log \frac{1-\rho(x)}{1-m} \,\Big\}\, dx 
\end{equation*}
for any density profile $\rho(\cdot)$ whose total mass is equal to
$m$.

The quasi-potential $V(\cdot)$ associated to the mass evolution, is given by $V(m) := \inf_{T>0} \inf_a I_T (a)$, where the
second infimum is carried over all trajectories
$a\colon [0,T] \to [0,M]$ such that $a(0) = \gamma$, $a(T)= m$.
In this formula, $\gamma$ stands for the typical mass under the
stationary state, determined by the weak interaction of the system
with the boundary.

Fix a mass $m$ and let $a_m$ be the relaxation path of the total mass
which starts from $m$ and converges to $\gamma$ as $t\to\infty$. Denote
by  $a^{\rm R}_m$ the time-reversed trajectory. In Lemma
\ref{Identific} and its proof we show that
\begin{equation*}
V(m) \;=\; I_{(-\infty, 0]} (a^{\rm R}_m) \;=\;
M \, \Big\{\, m
\,\log \frac{m}{\gamma} \,+\,
[1-m]
\,\log \frac{1-m}{1-\gamma} \,\Big\}\;. 
\end{equation*}
Theorem \ref{StaticGD} asserts that $V$ is indeed the large deviations
rate function of the total mass under the stationary state.

We conclude this section with heuristics to derive the quasi-potential
for the symmetric exclusion process with weak interaction at the
boundary.  Fix a density profile $\rho(\cdot)$ with mass $m$. As
time evolves, it relaxes to a constant density equal to
$m$. In a longer time-scale the total mass relaxes to $\gamma$.
Hence, reasoning backwards, it is expected that to fluctuate to $\rho$,
the system first changes its mass from $\gamma$ to $m$ following the relaxation path reflected in time. Then, its density profile evolves from one which is constant in space and has total mass
equal to $m$, to $\rho$, following the relaxation path reflected in
time. The total cost of this trajectory is given by
\begin{equation*}
I_{(-\infty, 0]} (a^{\rm R}_m) \;+\;
K^{\rm Neu}_{(-\infty, 0]} (u^{\rm R, Neu}_\rho) \;=\;
V(m) \,+\, V_m (\rho)   \;.
\end{equation*}
By the previous identities,
\begin{equation*}
W(\rho) \;:=\; V(m) \,+\, V_m (\rho)  \;=\;
\int_K \Big\{\, \rho (x)
\,\log \frac{\rho(x)}{\gamma} \,+\,
[1-\rho (x)]
\,\log \frac{1-\rho(x)}{1-\gamma} \,\Big\}\, dx \;,
\end{equation*}
is the quasi-potential for the symmetric exclusion process with weak
interaction at the boundary.

To prove that $W$ is indeed the quasi-potential, one should consider a
time-inhomogeneous dynamics in which the process evolves diffusively
in a time interval $[0,T]$, and properly time-rescaled, so as to observe an
evolution of the total mass in a time interval $[T,T+S]$.

It is also possible to use the matrix representation of the stationary
state to derive the above equation for the quasi-potential \cite{D22}. It
coincides with the quasi-potential for the symmetric exclusion process
with Robin boundary conditions as the interaction with the boundary
vanishes \cite{DHS2021, BEL21, BL22}.

\section{Notation and results}
\label{sec2}

Denote the state space by $\Omega_N:=\{0,1\}^{\Sigma_N}$, $N\ge 1$,
where $\Sigma_N=\{1,...,N-1\}$, and by $\e$ its element, so that for
any $x\in \Sigma_N$, $\e(x)=1$ if site $x$ is occupied and $\e(x)=0$
if it is empty.

Consider the infinitesimal generator
$\mathcal{L}_N=N^2\mathcal{L}_{N,0} + \mathcal{L}_{N,b}$ defined as
follows. For any function $f:\Omega_N \rightarrow \R$,
$$
(\mathcal{L}_{N,0}f)(\eta) = \sum_{x=1}^{N-2}\left(f(\eta^{x,x+1})-f(\eta)\right), $$
$$
(\mathcal{L}_{N,b}f)(\eta)= \sum_{x\in
\{1,N-1\}}\left[r_x(1-\eta(x))+(1-r_x)\eta(x)
\right]\left(f(\eta^x)-f(\eta) \right)
$$
with $r_1=\alpha$, $r_{N-1}=\beta$, where $0<\alpha,\beta<1$. In this
formula, for $x\in \{1,...,N-2\}$, the configuration $\eta^{x,x+1} $
is obtained from $\eta$ by exchanging the occupation variables
$\eta(x)$ and $\eta(x+1)$, i.e.,
\begin{equation} 
   \eta^{x,x+1}(y)= \left\{
    \begin{array}{ll}
        \e(x+1),~ ~\text{if}~ y=x\\ 
        \e(x),~ ~\text{if}~ y=x+1\\
        \e(y),~ ~\text{otherwise},
        \end{array}
\right.
\end{equation}
and for $x\in \{1,N-1\}$, the configuration $\e^x$ is obtained from $\e$ by flipping the occupation variable $\e(x)$, i.e,
\begin{equation} 
   \e^x(y)= \left\{
    \begin{array}{ll}
        1-\e(y),~ ~\text{if}~ y=x\\ 
        \e(y),~ ~\text{otherwise}.
        \end{array}
\right.
\end{equation}

\subsection*{Hydrodynamical limit and large deviations}

Here, we recall the results established in \cite{SSEPslowboundary},
resp. \cite{GDnoncritical} regarding the hydrodynamic limit,
resp. large deviations for the process with generator $\mathcal{L}_N$
known as the SSEP with slow boundaries. For that, let us first
introduce some notation. We fix a time horizon $T>0$.

Given a metric space $A$, $\mathcal{D}_A^T$ denotes the space of trajectories on $[0,T]$ which are right continuous with left limits and with values in $A$. Given a measure $\mu_N$ on $\Omega_N$, $\mathbb{P}_{\mu_N}$ is
the probability measure induced on $\mathcal{D}_{\Omega_N}^T$ by
$\{\eta_t,~ t\in [0,T]\}$ when  $\e_0$ has law $\mu_N$. Also,
denote by $\mathbb{E}_{\mu_N}$ the expectation with respect to
$\mathbb{P}_{\mu_N}$. Introduce
$$
\mathcal{M}:= \{\mu~ \text{is a positive measure on }~
[0,1]~\text{such that} ~ \mu([0,1])\leq 1\},
$$ which we equip with the weak topology. Denote by $\mathcal{M}_0\subset\mathcal{M}$ the subset of elements
which are absolutely continuous with respect to the Lebesgue measure
and with density between zero and one.

Introduce the empirical measure associated to an element $\e \in \Omega_N$ as the the element of $ \mathcal{M}$ defined by
\begin{equation*}
    \pi^N(\e,\mathrm{d}{u}) =\pi^N(\mathrm{d}{u}) := \frac{1}{N}\sum_{x=1}^{N-1}\e(x)\delta_{\frac{x}{N}}(\mathrm{d}{u})
\end{equation*}
where $ \delta_{\frac{x}{N}}$ is the Dirac measure at point $x/N$.  The process $(\pi_t^N)_{0\leq t\leq T}$ of empirical measures associated to $(\e_t)_{0\leq t \leq T}$ is a Markov process inducing a measure on $\mathcal{D}_{\mathcal{M}}^T$.

For a continuous function
$F:[0,1]\rightarrow \R$, write
$$\langle\pi^N,F\rangle = \frac{1}{N}\sumn\e(x)F(x/N) $$
and for $F,G\in L^2([0,1])$,
$$\langle F,G\rangle = \int_0^1F(x)G(x)dx. $$

Denote by $C^{i,j} = C^{i,j}([0,T]\times [0,1])$ the set of functions
that are of class $C^{i}$ in time and of class $C^{j}$ in space with
$i,j$ possibly infinite, and $C^{i}([0,T])$ resp. $C^{i}([0,1])$ the
functions that are $C^{i}$ in time, resp. space. To lighten notation,
given $G : [0,T]\times[0,1]\rightarrow \R$, we will sometimes write
$G_t(x)$ instead of $G(t,x)$ and $\partial_t G(t,x)$ denotes its
partial time derivative.

We say that a sequence of probability measures $(\mu_N)_{N\geq 1}$ on
$\Omega_N$ is associated to the profile $\rho_0\colon [0,1]\to [0,1]$
if for any $\delta>0$ and any continuous function G,
\begin{equation*}
    \underset{N \rightarrow \infty}{\lim} \mu_N\left[ \left|\langle \pi^N,G \rangle -\langle \rho_0,G\rangle \right|>\delta \right]=0.
\end{equation*}

The following results established in \cite{SSEPslowboundary} and
\cite{GDnoncritical} hold:

\begin{thm}
\label{LGNnonacc}(Hydrodynamic limit, c.f. \cite{SSEPslowboundary})
Consider a sequence $(\mu_N)_{N\geq 1}$ associated to a measurable
profile $\rho_0\colon [0,1]\to [0,1]$. Then, for every $t\geq 0$,
$\delta>0$ and continuous function $f:[0,1] \rightarrow \R$,
\begin{equation*}
\underset{N \rightarrow \infty}{\lim}~
\mathbb{P}_{\mu_N}\left[~ \left|
\langle \pi_t^N,f\rangle - \langle \rho_t,f\rangle > \delta \right| ~ \right] = 0,
\end{equation*}
where $\rho(t,.)$ is the unique weak solution of the heat equation with Neumann boundary conditions
 \begin{equation} \label{heatNeumann}
   \left\{
    \begin{array}{ll}
        \partial_t\rho(t,x) = \partial^2_x \rho(t,x),~ ~ \text{for}~ t>0,~ x\in (0,1),\\ 
        \partial_x\rho(t,0) = \partial_x\rho(t,1)=0,~ ~ \text{for}~ t>0,\\
        \rho(0,x) = \rho_0(x),~ ~ \text{for}~ x\in [0,1].
        \end{array}
\right.
\end{equation}
\end{thm}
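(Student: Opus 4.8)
The plan is to establish the theorem by the standard martingale method for the hydrodynamic limit: prove tightness of the laws of the empirical measure, characterise every limit point as supported on weak solutions of \eqref{heatNeumann}, and invoke uniqueness of such solutions. A pleasant feature of the SSEP is that the dynamics closes on the empirical measure, so no replacement/entropy estimate will be needed.

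First I would fix a test function $G\in C^2([0,1])$ with $G'(0)=G'(1)=0$ and, by Dynkin's formula, write $M^{N,G}_t:=\langle\pi^N_t,G\rangle-\langle\pi^N_0,G\rangle-\int_0^t\mathcal{L}_N\langle\pi^N_s,G\rangle\,ds$ as a martingale. A direct computation gives that $N^2\mathcal{L}_{N,0}\langle\pi^N,G\rangle$ equals, after summation by parts, $\langle\pi^N,\Delta_N G\rangle$ plus discrete boundary terms at the sites $1$ and $N-1$ which are $O(1/N)$ \emph{precisely because} $G'(0)=G'(1)=0$; here $\Delta_N G$ is the discrete Laplacian, which converges uniformly to $G''$. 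Since the boundary rates are bounded and a single boundary flip changes $\langle\pi^N,G\rangle$ by $O(1/N)$, one also has $\mathcal{L}_{N,b}\langle\pi^N,G\rangle=O(1/N)$ uniformly in $\eta$. Hence $\mathcal{L}_N\langle\pi^N_s,G\rangle=\langle\pi^N_s,G''\rangle+r_N(\eta_s)$ with $\sup_\eta|r_N(\eta)|\to0$. A similar, shorter computation bounds the quadratic variation by $\langle M^{N,G}\rangle_t\le C(G)\,T/N$, so by Doob's inequality $M^{N,G}_t\to0$ in $L^2(\mathbb{P}_{\mu_N})$, uniformly in $t\in[0,T]$.

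Next I would prove tightness of the laws $Q^N$ of $(\pi^N_t)_{0\le t\le T}$ on $\mathcal{D}_{\mathcal{M}}^T$. Since $\mathcal{M}$ with the weak topology is compact, it suffices to control time oscillations: for each $G$ in a countable dense family the decomposition $\langle\pi^N_\cdot,G\rangle=\langle\pi^N_0,G\rangle+\int_0^\cdot\big(\langle\pi^N_s,G''\rangle+r_N(\eta_s)\big)\,ds+M^{N,G}_\cdot$ exhibits $\langle\pi^N_\cdot,G\rangle$ as a uniformly Lipschitz-in-time drift plus a negligible martingale, so Aldous' criterion applies. Passing to a subsequential limit $Q^*$, I would first note that $0\le\langle\pi^N,G\rangle\le N^{-1}\sum_{x}G(x/N)\to\int_0^1 G$ for every continuous $G\ge0$, which forces $Q^*$ to be supported on trajectories of the form $\pi_t(du)=\rho(t,u)\,du$ with $0\le\rho\le1$; then, letting $N\to\infty$ in the martingale identity and using $M^{N,G}\to0$ and $r_N\to0$, one gets that $Q^*$-almost surely $\langle\rho_t,G\rangle-\langle\rho_0,G\rangle=\int_0^t\langle\rho_s,G''\rangle\,ds$ for all such $G$, while the hypothesis that $(\mu_N)$ is associated to $\rho_0$ identifies the initial datum. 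Thus $\rho$ is a weak solution of \eqref{heatNeumann}. Uniqueness of the bounded weak solution of \eqref{heatNeumann} (a standard energy estimate, or the semigroup representation) then gives $Q^*=\delta_\rho$, so $\pi^N\to\rho$ in probability in $\mathcal{D}_{\mathcal{M}}^T$; evaluating at a fixed time $t$ against a continuous $f$ yields the stated convergence.

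The main obstacle is entirely at the boundary: one must check that the slow boundary generator $\mathcal{L}_{N,b}$, being of order $1$ against the bulk of order $N^2$, leaves no trace at the diffusive scale and that the effective boundary condition is Neumann — this is what dictates the restriction to test functions with $G'(0)=G'(1)=0$ and requires the careful bookkeeping of the discrete boundary corrections above — together with invoking uniqueness of the weak solution of the Neumann heat equation in the $L^\infty$ class. By contrast, closing the equation is free here: since the exclusion rates are linear, $\mathcal{L}_N\langle\pi^N,G\rangle$ is already an affine functional of $\eta$, so no local-equilibrium/replacement lemma is needed.
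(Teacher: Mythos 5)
Theorem \ref{LGNnonacc} is not proved in this paper; it is imported verbatim from \cite{SSEPslowboundary}, so there is no internal proof to compare against. Your sketch is correct and is essentially the argument of that reference in the Neumann regime: Dynkin's formula for $G\in C^2$ with $G'(0)=G'(1)=0$, summation by parts producing the discrete Laplacian with $O(1/N)$ boundary remainders, the observation that the unscaled boundary generator contributes $O(1/N)$ to the drift so that no replacement lemma is needed, quadratic variation of order $1/N$, Aldous tightness, absolute continuity of limit points with density in $[0,1]$, and identification via the very weak formulation of \eqref{heatNeumann}. The only step a full write-up must flesh out is uniqueness for that very weak formulation (time-independent Neumann test functions): one either upgrades to time-dependent test functions and dualizes against the Neumann semigroup, or, as in \cite{SSEPslowboundary}, first derives an energy estimate placing the limit density in $L^2([0,T],\mathcal H^1)$ and proves uniqueness in that class; your parenthetical ``energy estimate or semigroup representation'' points at exactly these two options, so the sketch is sound.
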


\indent We write $L^2(0,1)$ the space of $L^2$ functions defined on
$[0,1]$ and denote by $\|. \|_{L^2(0,1)}$, the associated norm. Let
$\mathcal{H}^1(0,1)$ be the subset of $L^2(0,1)$ such that for any
$F\in \mathcal{H}^1(0,1)$, there is $\partial_xF\in L^2(0,1)$
satisfying
$\langle \partial_xF,G \rangle = -\langle F,\partial_x G \rangle$, for
any $G\in C^{\infty}$ with compact support in $(0,1)$. For
$F\in \mathcal{H}^1(0,1)$ define the norm
$$\|F\|_{\mathcal{H}^1}:= \left(\|F\|_{L^2}^2 + \|\partial_x F\|_{L^2}^2 \right)^{1/2}. $$

Let $L^2([0,T],\mathcal{H}^1)$ be the space of measurable functions $F:[0,T] \rightarrow \mathcal{H}^1$ such that
$$\|F\|^2_{L^2([0,T],\mathcal{H}^1)}:= \int_0^T \|F_t\|^2_{\mathcal{H}^1}dt<\infty. $$
Define the energy functional $\mathcal{E}:\mathcal{D}_{\mathcal{M}}^T \rightarrow [0,\infty] $, as in \cite{GDnoncritical}, by
$\mathcal{E}(\pi) = \underset{H}{\sup}~ \mathcal{E}_H(\pi) $ with the supremum taken over elements in $C^{0,1}$ with compact support and where,
 \begin{equation*} 
   \mathcal{E}_H(\pi) = \left\{
    \begin{array}{ll}
        \int_0^1\int_0^T\partial_xH(t,x)\rho(t,x)dtdx - 2\int_0^1\int_0^TH^2(t,x),~ ~ \text{if}~ \pi \in \mathcal{D}_{\mathcal{M}_0}^T~ \text{and}~ \pi_t(dx) = \rho_t(x)dx\\\\
        + \infty~ ~ \text{otherwise}.
        \end{array}
\right.
\end{equation*}
Introduce
$$\mathcal{F} = \left\{ \pi \in \mathcal{D}_{\mathcal{M}}^T,~  \langle \pi_t,1\rangle =  \langle \pi_0,1\rangle,~ \forall t\in [0,T]\right\}. $$
For $H\in C^{1,2}$, define the linear functional $\Tilde{J}_H(\pi):\mathcal{D}_{\mathcal{M}}^T \rightarrow [0,\infty] $ as
 \begin{equation*} 
    \Tilde{J}_H(\pi) = \left\{
    \begin{array}{ll}
        \langle \rho_T,H_T\rangle-\langle \rho_0,H_0\rangle - \int_0^T\langle \rho_s,\partial_s H_s\rangle ds ~+ &\int_0^T\langle \partial_x \rho_s, \partial_x H_s\rangle ds-\int_0^T \langle \rho_s(1-\rho_s),\left(\partial_x H_s \right)^2\rangle ds\\\\
        &\text{if}~ ~ \pi \in \mathcal{F}~ \text{and}~ \mathcal{E}(\pi)<\infty~ \text{with}~ \pi_t= \rho_t(x)dx,\\\\
        + \infty~ ~ \text{otherwise},
        \end{array}
\right.
\end{equation*}
The rate function
$\Tilde{I}_T: \mathcal{D}_{\mathcal{M}}^T\to [0,+\infty]$ that appears
in the large deviations principle proved in \cite{GDnoncritical} and
recalled below (Theorem \ref{GDNoncritical}) is given by:
$$\Tilde{I}_T(\pi) = \underset{H\in C^{1,2}}{\sup} \Tilde{J}_H(\pi). $$

\begin{thm} (Large deviations principle, c.f. \cite{GDnoncritical}) \label{GDNoncritical} Consider a sequence of deterministic configurations $(\e^N)_{N\geq 1}$ associated to a continuous profile $\rho_0$ which is bounded away from $0$ and $1$. The sequence of probability measures $(\mathbb{P}_{\delta_{\e^N}})_{N\geq 1}$ satisfies the following large deviations principle:
\begin{itemize}
    \item [(i)]\textbf{(Upper bound)} For any closed subset $\mathcal{C}$ of $\mathcal{D}_{\mathcal{M}}^T$,
    \begin{equation*}
        \lims ~ \frac{1}{N}\log \mathbb{P}_{\delta_{\e^N}}\left[\mathcal{C} \right] \leq - \underset{\pi \in \mathcal{C}}{\inf}~ \Tilde{I}_T(\pi)
    \end{equation*}
    \item [(ii)] \textbf{(Lower bound)} For any open subset $\mathcal{O}$ of $\mathcal{D}_{\mathcal{M}}^T$,
    \begin{equation*}
        \underset{N \rightarrow \infty}{\underline{\lim}} ~ \frac{1}{N}\log \mathbb{P}_{\delta_{\e^N}}\left[\mathcal{O} \right] \geq - \underset{\pi \in \mathcal{O}}{\inf}~ \Tilde{I}_T(\pi).
    \end{equation*}
\end{itemize}
    \end{thm}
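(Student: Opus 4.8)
The statement is the standard dynamical large deviations principle for the diffusively rescaled empirical measure; the plan is to follow the Benois--Kipnis--Landim scheme, adapted to the slow boundary as in \cite{GDnoncritical}. The starting point is, for each $H\in C^{1,2}$, the mean-one positive martingale
\begin{equation*}
M_t^{H}\;=\;\exp\Big\{N\langle \pi_t^N,H_t\rangle-N\langle \pi_0^N,H_0\rangle-\int_0^t e^{-N\langle \pi_s^N,H_s\rangle}\big(\partial_s+\LL_N\big)e^{N\langle \pi_s^N,H_s\rangle}\,ds\Big\}\;.
\end{equation*}
A second order Taylor expansion shows that, on configurations whose empirical measure is close to $\rho_s(x)\,dx$, one has $\tfrac1N e^{-N\langle \pi_s^N,H_s\rangle}\LL_N e^{N\langle \pi_s^N,H_s\rangle}\to\langle \rho_s,\partial_x^2 H_s\rangle+\langle \rho_s(1-\rho_s),(\partial_x H_s)^2\rangle$: the bulk part $N^2\LL_{N,0}$ produces these two terms after a discrete summation by parts, whereas $\LL_{N,b}$ has jump rates of order one, so its exponential version is $O(1)$ and, once divided by $N$, disappears. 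This is exactly why the rate function carries no boundary term and why the admissible trajectories are forced to conserve the total mass (the set $\mathcal{F}$): boundary flips are too rare to move $\langle \pi^N_t,1\rangle$ on the diffusive scale. Hence, on $\{\mathcal{E}(\pi^N)<\infty\}$ and after integrating by parts to turn $-\langle\rho,\partial_x^2H\rangle$ into $\langle\partial_x\rho,\partial_xH\rangle$, one gets $\tfrac1N\log M_T^H=\Tilde J_H(\pi^N)+o(1)$.

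\emph{Upper bound.} Because $\E_{\delta_{\eta^N}}[M_T^H]=1$, the exponential Markov inequality gives, for every $H$ and every compact $\mathcal{K}\subset\mathcal{D}_{\mathcal{M}}^T$, $\limsup_N\tfrac1N\log\p_{\delta_{\eta^N}}[\mathcal{K}]\le-\inf_{\pi\in\mathcal{K}}\Tilde J_H(\pi)$. I would then upgrade this to $-\inf_{\mathcal{K}}\Tilde I_T$ by a minimax argument over a countable family of test functions, after proving two super-exponential estimates: a replacement (and energy) estimate, which in particular shows that $\p[\mathcal{E}(\pi^N)=\infty]$ is super-exponentially small; and a bound on $\p[\sup_t|\langle\pi^N_t,1\rangle-\langle\pi^N_0,1\rangle|>\delta]$, which confines the limiting trajectories to $\mathcal{F}$. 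Exponential tightness of $(\pi^N)$ then extends the bound from compact to arbitrary closed sets.

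\emph{Lower bound.} It suffices to treat neighbourhoods $\mathcal{O}$ of a ``nice'' trajectory $\pi\in\mathcal{F}$, $\pi_t=\rho_t(x)\,dx$ with $\rho$ smooth and bounded away from $0$ and $1$, and then pass to the general case by an $\Tilde I_T$-density argument. For such $\pi$ one constructs $H^\pi$ for which the perturbed hydrodynamic equation — a heat equation with an $H^\pi$-dependent drift and Neumann conditions — admits $\rho$ as its solution; solvability with $\partial_x H^\pi=0$ at the endpoints is precisely where mass conservation of $\pi$ is used, and it yields $\Tilde I_T(\pi)=\Tilde J_{H^\pi}(\pi)$. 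Tilting the law by $M_T^{H^\pi}$ and proving the corresponding hydrodynamic limit shows $\pi^N\to\pi$ under the tilted measure; the relative-entropy (Jensen) inequality then gives $\liminf_N\tfrac1N\log\p_{\delta_{\eta^N}}[\mathcal{O}]\ge-\Tilde I_T(\pi)$. Finally, a general finite-cost $\pi$ is approximated by nice ones, with convergence of $\Tilde I_T$, via time shift, mollification, and convex combination with a space-constant profile of the same total mass.

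\emph{Main difficulty.} The delicate point is the $\Tilde I_T$-density step: one must regularize an arbitrary trajectory of finite cost while simultaneously keeping its total mass fixed (remaining in $\mathcal{F}$), controlling the energy, and pushing the profile strictly inside $(0,1)$, all without increasing the cost in the limit. Carrying this out near the spatial endpoints $\{0,1\}$, where there is no Dirichlet datum to pin the density and the profile may a priori touch $0$ or $1$, is the technical heart of the argument.
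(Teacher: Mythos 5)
This theorem is not proved in the paper at all --- it is recalled verbatim from \cite{GDnoncritical}, so there is no in-paper argument to compare your attempt against. Your sketch correctly reproduces the standard Benois--Kipnis--Landim strategy used in that reference (exponential martingale, superexponential replacement/energy and mass-conservation estimates, exponential tightness, tilting plus an $\Tilde{I}_T$-density argument), and correctly identifies both why the boundary generator drops out of the rate functional and why the limiting trajectories are confined to the mass-conserving set $\mathcal{F}$.
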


\subsection*{Main results}
    
To observe an evolution of the total mass of the process
$(\e_t)_{t\geq 0}$, time has to be accelerated by a factor $N$, that
is, by speeding-up the exclusion part by $N^3$ and the boundary
dynamics by $N$.

Fix one and for all a time horizon $T>0$ and denote by
$\{\zeta_t,~ t\in [0,T]\}$ the Markov process with generator 
$$ \mathfrak{L}_N = N^3 \mathcal{L}_{N,0} + N \mathcal{L}_{N,b}= N\LL_N.$$  
We will often refer to $\{\zeta_t,~ t\in [0,T]\}$ as the accelerated process. Given a measure $\mu_N$ on $\Omega_N$, $\Tilde{\mathbb{P}}_{\mu_N}$ is
the probability measure induced on $\mathcal{D}_{\Omega_N}^T$ by the speeded up process
$\{\zeta_t,~ t\in [0,T]\}$ when $\zeta_0$ has law $\mu_N$. Also,
denote by $\Tilde{\mathbb{E}}_{\mu_N}$ the expectation with respect to
$\Tilde{\mathbb{P}}_{\mu_N}$. 

For $\pi \in \mathcal{M}$, introduce
$$
\widehat{m}(\pi) =  \langle1,\pi\rangle = \pi([0,1])
$$
the total mass of $\pi$. Then, the process $\left(\widehat{m}(\pi^N(\zeta_t,.)\right)_{0\leq t\leq T}$, which we will denote by $\left(\widehat{m}(\pi_t^N)\right)_{0\leq t\leq T}$ defines a hidden Markov process with state space $[0,1]$ and induces a probability measure on $\mathcal{D}_{[0,1]}^T$, the space of trajectories defined on $[0,T]$ that are right continuous with left limits and taking their values in $[0,1]$.

The results established in this paper are given in the following subsections.

\subsubsection*{Hydrodynamic limits}

\begin{thm}\label{LGNprocess}(Hydrodynamic limit for the accelerated
process). Fix a measurable profile $\rho_0:[0,1] \rightarrow [0,1]$
and consider a sequence $(\mu_N)_{N \geq 1}$ associated to
$\rho_0$. For any $t\in (0,T]$, $\delta>0$ and
$H\in \mathcal{C}^{0}([0,1])$,
    \begin{equation*}
        \underset{N\rightarrow\infty}{\lim}\Tilde{\mathbb{P}}_{\mu_N}~ \left[\Big|\langle\pi_t^N,H\rangle  - m(t)\langle1,H\rangle \Big|>\delta \right] = 0,
    \end{equation*}
     where $m:[0,T] \rightarrow [0,1]$ is the unique solution of 
 \begin{equation} \label{EDPmasse}
   \left\{
    \begin{array}{ll}
        \partial_tm = -\, 2\, (m -\gamma) \\ 
        m(0)=\int_0^1\rho_0(x)dx,
        \end{array}
\right.
\end{equation}
where $\gamma:= (\alpha+ \beta)/2$.
\end{thm}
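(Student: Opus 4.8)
The plan is to prove the hydrodynamic limit for the accelerated process by the standard entropy/martingale route, adapted to the fact that in the accelerated time-scale the empirical density profile collapses instantaneously onto a spatially constant profile whose value $m(t)$ evolves according to the ODE \eqref{EDPmasse}. First I would establish tightness of the law of $\pi^N$ under $\tilde{\mathbb P}_{\mu_N}$ in $\mathcal D_{\mathcal M}^T$; since $\mathcal M$ is a compact metric space under the weak topology, tightness at the level of one-time marginals is automatic and one only needs an Aldous-type estimate on the modulus of continuity, which follows from bounding the Dirichlet form and the martingale quadratic variation associated with $\mathfrak{L}_N$. The crucial point is that the exchange part of the dynamics is sped up by $N^3$, so over any positive macroscopic time the density instantaneously homogenizes in space; hence it suffices to control the single scalar observable $\widehat m(\pi_t^N)$ and to show that every limit point is concentrated on trajectories $\pi_t(dx)=m(t)\,dx$.

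The core computation is the martingale decomposition. For $H\in C^2([0,1])$ with $\partial_x H(0)=\partial_x H(1)=0$, the Dynkin martingale $M_t^{N,H}=\langle \pi_t^N,H\rangle-\langle\pi_0^N,H\rangle-\int_0^t \mathfrak{L}_N\langle\pi_s^N,H\rangle\,ds$ has quadratic variation of order $1/N$ (the $N^3$ speed-up of the bulk is compensated by the $1/N^2$ from the discrete Laplacian acting on $H$, and the two extra boundary factors contribute an $O(1/N)$ term), so $M_t^{N,H}\to 0$ in $L^2$. The bulk term $N^3\mathcal L_{N,0}\langle\pi^N,H\rangle$ equals $\langle\pi_s^N,N^2\Delta_N H\rangle$, which is $O(N)$ and does not vanish — this is the term that forces instantaneous homogenization: integrated against the density it penalizes any spatial variation, and combined with the energy estimate it shows that for every $t>0$ any limit point satisfies $\int_0^1|\rho(t,x)-m(t)|^2\,dx=0$. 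The boundary term $N\mathcal L_{N,b}\langle\pi^N,H\rangle$ contributes, after summing the reservoir rates at sites $1$ and $N-1$, a term converging to $[\alpha+\beta - (\rho(s,0)+\rho(s,1))]\,H$-type expression; once homogenization gives $\rho(s,0)=\rho(s,1)=m(s)$, this reduces to $(\alpha+\beta-2m(s))\langle 1,H\rangle = -2(m(s)-\gamma)\langle 1,H\rangle$, which is exactly the right-hand side of \eqref{EDPmasse}.

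To make the homogenization rigorous I would use the energy estimate of the type recalled via the functional $\mathcal E$ in the excerpt: bounding $\mathcal E(\pi^N)$ by the entropy of $\mu_N$ relative to a reference product measure, one gets that limit points have finite energy, hence $\rho(t,\cdot)\in \mathcal H^1(0,1)$ for a.e. $t$; then testing the bulk identity against $H$ with $\partial_x H=0$ at the boundary and using that the $N^2\Delta_N H$ term must be absorbed shows $\partial_x\rho(t,\cdot)\equiv 0$, i.e. $\rho(t,x)=m(t)$. Finally, choosing $H\equiv 1$ in the martingale decomposition gives directly a closed equation for $\widehat m(\pi_t^N)$ whose limit is \eqref{EDPmasse}; uniqueness of the solution of that linear ODE identifies the limit and upgrades convergence in distribution to convergence in probability, yielding the statement. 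The main obstacle is the boundary term: because only sites $1$ and $N-1$ interact with the reservoirs and the rate is not sped up enough to create a boundary layer, one must argue that the bulk homogenization propagates all the way to the boundary sites — i.e. that $\rho(s,0)$ and $\rho(s,1)$ (the boundary traces of the limiting profile) genuinely equal $m(s)$ — which requires combining the $\mathcal H^1$-regularity of the limit with a replacement (local equilibrium) lemma controlling the occupation variables near the boundary in terms of the empirical density.
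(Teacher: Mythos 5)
Your treatment of the total mass (take $H\equiv 1$, for which the bulk generator contributes nothing exactly, and close the boundary term with a replacement lemma turning $\eta_s(1),\eta_s(N-1)$ into $\widehat m(\pi^N_s)$) is precisely what the paper does in Proposition \ref{LGNmass}. The gaps are in the other half of your plan, the spatial homogenization via the martingale decomposition for general $H$. First, the quadratic variation of $M^{N,H}_t$ is \emph{not} $O(1/N)$: each exchange changes $\langle\pi^N,H\rangle$ by $N^{-1}(\eta(x{+}1)-\eta(x))\,(H(x/N)-H((x{+}1)/N))=O(N^{-2})$, so the carré du champ of the bulk part is $N^3\cdot O(N)\cdot O(N^{-4})=O(1)$. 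The martingale therefore does not vanish for non-constant $H$ in the accelerated time scale, and the deterministic identification of $\langle\pi^N_t,H\rangle$ along your route breaks down (the limit is deterministic only because the $O(N)$ drift damps the $O(1)$ noise, which is a different and harder argument). Relatedly, the limiting identity you write for general $H$ cannot close: the boundary term converges to $(\alpha-\rho_s(0))H(0)+(\beta-\rho_s(1))H(1)$, which with $\rho_s\equiv m(s)$ is incompatible with $\partial_t\big(m\langle 1,H\rangle\big)=-2(m-\gamma)\langle 1,H\rangle$ unless $H(0)+H(1)=2\langle 1,H\rangle$; the discrepancy is carried by $\lim_N N\int_0^t\langle\pi^N_s,\Delta_N H\rangle\,ds$, a finite nonzero quantity that is \emph{not} a function of the limit point, so the martingale identity for general $H$ identifies nothing.

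Second, even granting the energy estimate (which in the accelerated scale does give $\int_0^T\mathbb D_N(f^N_s)\,ds=O(N^{-2})$ and hence spatial flatness for a.e.\ $s$), you only obtain the profile $\rho(s,\cdot)=m(s)$ for almost every $s$, whereas the theorem is a statement at every fixed $t\in(0,T]$. Upgrading requires path continuity of $t\mapsto\pi_t$, and here tightness of the measure-valued process in $\mathcal D^T_{\mathcal M}$ is genuinely problematic: the limiting trajectory jumps at $t=0^+$ from $\rho_0\,dx$ to $m(0)\,dx$, and continuous pre-limit paths cannot converge in the $J_1$ topology to a discontinuous path, so the "automatic" Aldous-type tightness you invoke fails near $t=0$. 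The paper avoids all of this with a different mechanism (Proposition \ref{hydrossep}): at a fixed macroscopic time $t$ it rewinds by a \emph{microscopic} amount $t_0/N$, observes that on $[t-t_0/N,t]$ the accelerated process is exactly the diffusively rescaled process run for time $t_0$, applies the Neumann hydrodynamic limit of Theorem \ref{LGNnonacc} together with the exponential relaxation of the Neumann heat semigroup (Lemma \ref{analyse}) to conclude $\langle\pi^N_t,H\rangle\approx\widehat m(\pi^N_{t-t_0/N})\langle 1,H\rangle$, and then uses tightness of the scalar mass process to replace $\widehat m(\pi^N_{t-t_0/N})$ by $\widehat m(\pi^N_t)$. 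You would need either this rewinding argument or a substitute for it to get the fixed-$t$ statement.
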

For $N\geq 1$ fixed, both Markov processes $(\e_t)_{0\leq t \leq T}$ and $(\zeta_t)_{0\leq t \leq T}$ are irreducible with finite state space and their generators are proportional. They therefore admit a unique same stationary measure on $\Omega_N$ that we denote by $\mu_{ss}^N$.

\begin{thm} \label{LGNmeseq}(Law of large numbers for the invariant measures). For any $H\in \mathcal{C}^0([0,1])$,
\begin{equation*}
    \underset{N \rightarrow \infty}{\lim}~\mathbb{E}_{\mu_{ss}^N}\Big[~ \Big|\langle\pi^N,H \rangle - \gamma\langle 1,H \rangle\Big|~ \Big] =0.
\end{equation*}
\end{thm}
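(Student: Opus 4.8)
The plan is to derive the statement from the hydrodynamic limit for the accelerated process (Theorem \ref{LGNprocess}) together with the stationarity of $\mu_{ss}^N$, exploiting that the solution of the mass equation \eqref{EDPmasse} relaxes to $\gamma$ exponentially fast, uniformly over the initial mass. Fix $H\in\mathcal C^0([0,1])$ and $t>0$, and run the accelerated process $(\zeta_s)_{0\le s\le t}$ with $\zeta_0$ of law $\mu_{ss}^N$. By stationarity, $\langle\pi_t^N,H\rangle$ has the same law as $\langle\pi^N,H\rangle$, so
\begin{equation*}
\E_{\mu_{ss}^N}\big[\,\big|\langle\pi^N,H\rangle-\gamma\langle 1,H\rangle\big|\,\big]
\;=\;\widetilde{\mathbb{E}}_{\mu_{ss}^N}\big[\,\big|\langle\pi_t^N,H\rangle-\gamma\langle 1,H\rangle\big|\,\big].
\end{equation*}
Let $m^N(\cdot)$ be the solution of \eqref{EDPmasse} with $m^N(0)=\langle 1,\pi_0^N\rangle\in[0,1]$, so that $m^N(t)=\gamma+(m^N(0)-\gamma)e^{-2t}$ and $|m^N(t)-\gamma|\le e^{-2t}$. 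The triangle inequality then yields
\begin{equation*}
\E_{\mu_{ss}^N}\big[\,\big|\langle\pi^N,H\rangle-\gamma\langle 1,H\rangle\big|\,\big]
\;\le\;\widetilde{\mathbb{E}}_{\mu_{ss}^N}\big[\,\big|\langle\pi_t^N,H\rangle-m^N(t)\langle 1,H\rangle\big|\,\big]\;+\;e^{-2t}\,\|H\|_\infty .
\end{equation*}

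The core step is to show that the first term on the right tends to $0$ as $N\to\infty$, for each fixed $t$. This is the content of Theorem \ref{LGNprocess}, the only difference being that the initial law is now the random invariant measure $\mu_{ss}^N$ rather than a sequence associated to a deterministic profile. I would justify this by noting that the hydrodynamic limit remains valid for random initial conditions $\mu_N$ — with $m(0)$ in \eqref{EDPmasse} replaced by the random initial mass $\langle 1,\pi_0^N\rangle$ — as soon as a uniform $O(N)$ bound holds on the relative entropy of $\mu_N$ with respect to a product reference measure bounded away from $0$ and $1$, since the proof uses the initial law only through such a bound and the limiting trajectory for $t>0$ is a deterministic function of the initial total mass. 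For $\mu_{ss}^N$ this entropy bound is immediate: with $\nu$ the uniform (Bernoulli-$1/2$ product) measure on $\Omega_N$ one has $H(\mu_{ss}^N\mid\nu)\le(N-1)\log 2$. Hence $\langle\pi_t^N,H\rangle-m^N(t)\langle 1,H\rangle\to0$ in $\widetilde{\mathbb{P}}_{\mu_{ss}^N}$-probability and, being bounded by $2\|H\|_\infty$, also in $L^1$; this gives $\limsup_{N\to\infty}\E_{\mu_{ss}^N}[\,|\langle\pi^N,H\rangle-\gamma\langle 1,H\rangle|\,]\le e^{-2t}\|H\|_\infty$ for every $t>0$, and letting $t\to\infty$ proves the theorem.

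I expect the main obstacle to be exactly this transfer of Theorem \ref{LGNprocess} to the a priori uncontrolled invariant measure $\mu_{ss}^N$: one must either re-run the entropy-method proof of the hydrodynamic limit observing that only the entropy bound above is used, or condition on $\pi_0^N$ and use that the limiting hydrodynamic trajectory depends on the conditioning only through the initial mass. As a complementary and more elementary observation, the limit value $\gamma$ is pinned down directly: applying stationarity to the functions $\eta\mapsto\eta(x)$, the identities $\E_{\mu_{ss}^N}[\mathfrak{L}_N\eta(x)]=0$ force $\rho_N(x):=\E_{\mu_{ss}^N}[\eta(x)]$ to be affine in $x$ on $\{1,\dots,N-1\}$ and to satisfy, at $x=1$ and $x=N-1$, Robin-type relations with small parameter $N^{-2}$; solving gives $\rho_N(x)-\gamma=a_N\,(x-\tfrac N2)$ with $a_N=(\beta-\alpha)/(2N^2+N-2)$, so that $\sup_x|\rho_N(x)-\gamma|=O(N^{-1})$ and $\E_{\mu_{ss}^N}[\langle\pi^N,H\rangle]\to\gamma\langle 1,H\rangle$. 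Upgrading this to the stated $L^1$ convergence would then only require $\mathrm{Var}_{\mu_{ss}^N}(\langle\pi^N,H\rangle)\to0$, which follows from the standard $O(N^{-1})$ bound, uniform off the diagonal, on the truncated two-point function $\E_{\mu_{ss}^N}[\eta(x)\eta(y)]-\rho_N(x)\rho_N(y)$ — obtained from the closed discrete equation supplied by $\E_{\mu_{ss}^N}[\mathfrak{L}_N(\eta(x)\eta(y))]=0$ — together with the trivial $O(N^{-1})$ bound on the diagonal contribution.
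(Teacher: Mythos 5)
Your main argument is essentially the paper's: use stationarity to replace $\mu_{ss}^N$ by the law of the accelerated process at time $t$, invoke the hydrodynamic limit to flatten the profile to the (random) initial mass, and use that the mass ODE \eqref{EDPmasse} relaxes to $\gamma$ at rate $e^{-2t}$ uniformly over $m(0)$. You also correctly isolate the one delicate point, namely that Theorem \ref{LGNprocess} is stated only for initial laws associated to a deterministic profile. The paper's fix, however, is not an entropy estimate: none of the ingredients actually require one. Lemma \ref{tightness} and the replacement Lemma \ref{RL} are stated for arbitrary sequences of initial measures (the latter only uses the trivial bound $|d\mu_N/d\nu_\rho^N|\le e^{CN}$), so the paper first proves Proposition \ref{LGNequ} ($\widehat{m}(\pi^N)\to\gamma$ under $\mu_{ss}^N$) by extracting limit points of the stationary mass trajectory and using \eqref{Portmant}; it then splits $\langle\pi_T^N,G\rangle-\gamma\langle1,G\rangle$ by comparing to $\widehat{m}(\pi^N_{T-t_0/N})$ and flattens the profile over the microscopic window $[T-t_0/N,T]$ via Proposition \ref{hydrossep} and the Neumann relaxation Lemma \ref{analyse}, which again hold for arbitrary initial laws. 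So your "condition on the initial mass / re-run the proof" route is the right one, but as written the entropy-bound justification is a placeholder rather than a proof; to be complete you would need either the subsequence argument of Propositions \ref{LGNequ}--\ref{hydrossep} or an equivalent. Your second, correlation-function argument is a genuinely different and attractive elementary route: the one-point computation is correct (the affine profile with slope $(\beta-\alpha)/(2N^2+N-2)$ does follow from $\E_{\mu_{ss}^N}[\mathcal{L}_N\eta(x)]=0$), the two-point hierarchy does close for this dynamics, and an $o(1)$ off-diagonal covariance bound would indeed upgrade the convergence of the mean to the stated $L^1$ convergence, bypassing the hydrodynamic machinery entirely; the paper does not pursue this, and the required uniform covariance bound is asserted rather than proved in your sketch, so it would need to be carried out (or cited) to make that route self-contained.
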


\subsubsection*{Large deviations principles}

We start by defining the rate function that will appear in the dynamical large deviations principle.
\begin{Def}
    For $T>0$ fixed and $G\in \mathcal{C}^1([0,T])$, define $J_{T,G}:\mathcal{D}_{[0,1]}^T \rightarrow \R $ by
    \begin{equation}\label{fonctA}
    \begin{split}
            J_{T,G}(a) =: a_T G_T - a_0 G_0 - \int_0^T \partial_s G_s a_s ds - \int_0^TA_G(a)(s)ds,
\end{split}
\end{equation}
where
\begin{equation}\label{AG}
    A_G(a) := 2\gamma (1-a)\big(e^G-1\big) + 2\big(1-\gamma\big)a \big(e^{-G}-1\big).
\end{equation}
\end{Def}
The rate function is defined as follows.
\begin{Def}\label{defGD} Define $I_T(.):\mathcal{D}_{[0,1]}^T \rightarrow \R \cup \{+\infty\}$ by $I_{T}\big(a\big) = \underset{G\in \mathcal{C}^1([0,T])}{\sup}J_{T,G}(u) $ and for $m\in [0,1]$,
\begin{equation*}
  I_{T}\big(a|m\big) = \left\{
    \begin{array}{ll}
        I_{T}\big(a\big)~ ~ \text{if}~ ~ a(0)=m\\ 
        +\infty ~~~~~~~~~~~ \text{otherwise.}
        \end{array}
\right.
\end{equation*}
\end{Def}

 We define $V:[0,1] \rightarrow [0,+ \infty]$ the quasi potential for the rate function $I_T(~.~|\gamma)$:
\begin{equation*}
    V(m) := \underset{T>0}{\inf} ~ ~ \underset{a(.),~ a(T) = m}{\inf}~ I_T\left(a|\gamma\right),
\end{equation*}
where the infimum is taken over elements of $\mathcal{C}^1([0,T])$.

\begin{lem}\label{Identific}
    The quasi potential satisfies:
    $$\forall m\in [0,1],~ V(m)=S(m), $$
    where
    \begin{equation} \label{EntropieS}
    S(m) := m\log\left(\frac{m}{\gamma} \right) + (1-m) \log\left(\frac{1-m}{1-\gamma} \right).
\end{equation}
\end{lem}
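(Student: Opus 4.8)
The plan is to establish the two bounds $V(m)\le S(m)$ and $V(m)\ge S(m)$ separately; the engine in both is the elementary Hamilton–Jacobi identity
\[
A_{S'(a)}(a)\;=\;0\qquad\text{for all }a\in(0,1),
\]
obtained by substituting $e^{S'(a)}=\tfrac{(1-\gamma)a}{\gamma(1-a)}$ into the defining formula of $A_G$ and cancelling. Throughout, for a constant $p\in\R$ I write $A_p(a):=2\gamma(1-a)(e^{p}-1)+2(1-\gamma)a(e^{-p}-1)$, so that for $a\in\mathcal C^1([0,T])$ an integration by parts gives $J_{T,G}(a)=\int_0^T\big(\dot a_s G_s-A_{G_s}(a_s)\big)\,ds$, and I let $\Lambda(a,v):=\sup_{p\in\R}\big(vp-A_p(a)\big)$ be the associated Lagrangian. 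I shall use three further elementary facts: (i) $p\mapsto A_p(a)$ is convex with $A_0(a)=0$, hence $A_p(a)\le 0$ for every $p$ lying between $0$ and $S'(a)$; (ii) $\partial_p A_p(a)\big|_{p=S'(a)}=2(a-\gamma)$, so that $S'(a)$ is the maximiser in $\sup_p\big(2(a-\gamma)p-A_p(a)\big)$ and therefore $\Lambda\big(a,2(a-\gamma)\big)=2(a-\gamma)S'(a)$; (iii) $\Lambda(\gamma,0)=0$ and $\Lambda$ is continuous near $(\gamma,0)$.

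For the \textbf{lower bound} fix $m\in[0,1]$, $T>0$ and $a\in\mathcal C^1([0,T])$ with $a(0)=\gamma$, $a(T)=m$; we may assume $I_T(a)<\infty$. For $\lambda\in(0,1)$ put $S_\lambda(r):=S\big((1-\lambda)r+\lambda\gamma\big)$, which is smooth on all of $[0,1]$, and use the test function $G^\lambda_s:=S_\lambda'(a_s)=(1-\lambda)S'\big((1-\lambda)a_s+\lambda\gamma\big)\in\mathcal C^1([0,T])$, which is bounded. Since $(1-\lambda)a_s+\lambda\gamma$ lies between $a_s$ and $\gamma$ and $S'$ is increasing with $S'(\gamma)=0$, the value $G^\lambda_s$ lies between $0$ and $S'(a_s)$, so by (i) $A_{G^\lambda_s}(a_s)\le 0$, whence
\[
I_T(a)\;\ge\;J_{T,G^\lambda}(a)\;=\;\int_0^T\dot a_s S_\lambda'(a_s)\,ds-\int_0^T A_{G^\lambda_s}(a_s)\,ds\;\ge\;S_\lambda(a_T)-S_\lambda(a_0)\;=\;S\big((1-\lambda)m+\lambda\gamma\big).
\]
Letting $\lambda\downarrow0$ and using continuity of $S$ on $[0,1]$ gives $I_T(a)\ge S(m)$, and taking the infimum over $T$ and $a$ yields $V(m)\ge S(m)$. (This regularisation also handles, at no extra cost, paths that touch $0$ or $1$ at intermediate times, where the naive choice $G=S'(a)$ would be illegitimate.)

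For the \textbf{upper bound} with $m\in(0,1)$ the natural candidate is the time–reversed relaxation path, the solution of $\dot a=2(a-\gamma)$ reaching $m$; since it leaves $\gamma$ only in infinite time I work with a large parameter $T$ and set, on $[0,T+1]$, $a^T_t:=\gamma+(m-\gamma)e^{2(t-T-1)}$ for $t\in[1,T+1]$, and $a^T|_{[0,1]}$ an arbitrary $\mathcal C^1$ interpolation between $\gamma$ and $a^T_1=\gamma+(m-\gamma)e^{-2T}$ chosen so that $(a^T_s,\dot a^T_s)\to(\gamma,0)$ uniformly on $[0,1]$. Using the pointwise Legendre inequality $J_{T,G}\le\int\Lambda$ together with (ii) one gets $I_{[1,T+1]}(a^T)\le\int_1^{T+1}\Lambda\big(a^T_s,2(a^T_s-\gamma)\big)\,ds=\int_1^{T+1}\dot a^T_s S'(a^T_s)\,ds=S(m)-S\big(\gamma+(m-\gamma)e^{-2T}\big)$, while $I_{[0,1]}(a^T)\le\int_0^1\Lambda(a^T_s,\dot a^T_s)\,ds\to0$ by (iii). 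Since $J_{T,G}$ is additive over adjacent time intervals, $I_{T+1}(a^T)\le I_{[0,1]}(a^T)+I_{[1,T+1]}(a^T)$, so letting $T\to\infty$ and using continuity of $S$ gives $V(m)\le\limsup_T I_{T+1}(a^T)\le S(m)$. For the endpoints $m\in\{0,1\}$ the same path reaches the boundary at the final time, where $\Lambda$ is infinite, so I additionally truncate the reversed–relaxation segment at time $T-\varepsilon$, where $a^T_{T-\varepsilon}$ is close to but distinct from $m$, and append a short $\mathcal C^1$ piece driving $a^T$ to $m$ on $[T-\varepsilon,T]$; because that path hits the boundary transversally, the distance $\delta_s$ of $a^T_s$ to $\{0,1\}$ decays linearly and $\Lambda(a^T_s,\dot a^T_s)=O(-\delta_s\log\delta_s)$ there, so this piece costs $\eta(\varepsilon)\to0$ uniformly in $T$; sending $T\to\infty$ then $\varepsilon\downarrow0$ and using continuity of $S$ at $0$ and $1$ gives $V(m)\le S(m)$ there too. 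Combining the two inequalities proves $V=S$.

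I expect the \textbf{upper bound} to be the delicate part — not the identification of the optimal trajectory (the time–reversed relaxation path is forced by the structure) but the two limiting procedures it requires: the auxiliary launch segment needed because the optimal path detaches from $\gamma$ only in infinite time, and, for $m\in\{0,1\}$, the integrability of $\Lambda$ near the boundary. The lower bound, by contrast, becomes routine once one replaces $S$ by the regularised $S_\lambda$, which simultaneously produces a bounded $\mathcal C^1$ test function and keeps $A_G(a)\le0$.
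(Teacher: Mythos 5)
Your proof is correct, and its skeleton coincides with the paper's: both bounds rest on the identity $A_{S'(a)}(a)=0$, the lower bound uses the test function $G=S'(a)$, and the upper bound uses the time-reversed relaxation path preceded by a short launch segment out of $\gamma$ whose cost vanishes. The differences lie in the technical execution, and in both places your route is more self-contained. For the lower bound the paper first treats paths in $\Pi_3^T$ (smooth, bounded away from $0$ and $1$), where $H=S'(a)$ is admissible and $J_{T,H}(a)=S(m)$ is computed exactly, and then extends to general paths via the $I_T$-density machinery of Lemmas \ref{Pi1}--\ref{PI3}; your regularisation $G^\lambda=S_\lambda'(a)$ produces a legitimate bounded $\mathcal C^1$ test function for \emph{every} $\mathcal C^1$ path, including those touching $\{0,1\}$, and the convexity observation $A_p(a)\le 0$ for $p$ between $0$ and $S'(a)$ replaces the exact cancellation by a one-sided bound that suffices — so you bypass the density argument entirely. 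For the upper bound the paper invokes Proposition \ref{Inversion} to identify the optimal $H$ and then evaluates $J_{T_1,H}$ by the page-long computation of Appendix \ref{Appendice}, controlling the launch segment via Lemma \ref{ControlI1}; you instead use the Legendre bound $I_T(a)\le\int\Lambda(a_s,\dot a_s)\,ds$ and the identity $\Lambda(a,2(a-\gamma))=2(a-\gamma)S'(a)=\tfrac{d}{ds}S(a_s)$, which collapses the appendix to a fundamental-theorem-of-calculus evaluation, and you control the launch segment by continuity of $\Lambda$ at $(\gamma,0)$ rather than by an explicit family of paths. Two cosmetic points: the pointwise estimate near the boundary in your endpoint discussion should read $\Lambda(a^T_s,\dot a^T_s)=O\bigl(\log(1/\delta_s)\bigr)$ (it is the \emph{integral} over the short final piece that is $O(\varepsilon\log(1/\varepsilon))$), and the gluing of the launch segment to the exponential piece is not literally $\mathcal C^1$ at the junction unless you match derivatives or smooth it — but the paper's own path $m^*$ has the same kink, and neither affects the argument.
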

We are now in position to state the dynamic and static large deviations principle.
\begin{thm}\label{DynamicGD}(Dynamical large deviations).
    Fix $m_0\in (0,1)$ an initial mass and consider a sequence of
    configurations $(\e^N)_{N\geq 1}$ such that $
    \widehat{m}(\pi^N(\e^N)) $ converges to $m_0$. The sequence of probability measures $\Tilde{\mathbb{P}}_{\delta_{\e^N}}$ satisfies the following large deviations principle:
    \begin{itemize}
        \item [(i)] For any closed subset $\mathcal{F}$ of $\mathcal{D}_{[0,1]}^T$,
        \begin{equation*}
            \underset{N \rightarrow \infty}{\lim} ~ \frac{1}{N} \log~ \Tilde{\mathbb{P}}_{\delta_{\e^N}}\left[\left(\widehat{m}(\pi_t^N)\right)_{0\leq t\leq T}\in \mathcal{F} \right] \leq - \underset{a\in \mathcal{F}}{\inf} ~ I(a|m_0).
        \end{equation*}
        \item[(ii)] For any open subset $\mathcal{O}$ of $\mathcal{D}_{[0,1]}^T$,
        \begin{equation*}
            \underset{N \rightarrow \infty}{\lim} ~ \frac{1}{N} \log~ \Tilde{\mathbb{P}}_{\delta_{\e^N}}\left[\left(\widehat{m}(\pi_t^N)\right)_{0\leq t\leq T}\in \mathcal{O} \right] \geq - \underset{a\in \mathcal{O}}{\inf} ~ I(a|m_0).
        \end{equation*}
    \end{itemize}
\end{thm}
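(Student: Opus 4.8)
The plan is to deduce Theorem \ref{DynamicGD} from the dynamical large deviations principle for the full empirical measure (Theorem \ref{GDNoncritical}) via a contraction argument, after reconciling the two time-scales. The map $\pi \mapsto \widehat m(\pi)$ from $\mathcal{D}_{\mathcal{M}}^T$ to $\mathcal{D}_{[0,1]}^T$ is continuous, so a naive contraction would give a rate function $\inf\{\Tilde{I}_T(\pi) : \widehat m(\pi(\cdot)) = a(\cdot)\}$. The first obstacle is that Theorem \ref{GDNoncritical} concerns the process speeded up by $N^2$, whereas $\{\zeta_t\}$ is speeded up by an extra factor $N$; on this longer scale the density profile collapses instantaneously onto a flat profile, so the relevant trajectories $\pi$ are of the form $\pi_t(dx) = a(t)\,dx$, and one must show that the cost of any trajectory whose profile is not (essentially) flat is superlinear in $N$ and hence negligible at the exponential scale $1/N$. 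Concretely, I would first establish, via the Radon–Nikodym/entropy estimates underlying Theorem \ref{GDNoncritical} applied to the generator $\mathfrak{L}_N = N\LL_N$, that for any test function $G=G(t)$ depending only on time, the exponential martingale
\begin{equation*}
\exp\Big\{N\Big[\langle \pi_T^N,G_T\rangle - \langle \pi_0^N,G_0\rangle - \int_0^T \langle \pi_s^N,\partial_s G_s\rangle\,ds\Big] - \int_0^T N\big(e^{-N\langle\pi_s^N,G_s\rangle}\mathfrak{L}_N e^{N\langle\pi_s^N,G_s\rangle}\big)\,ds\Big\}
\end{equation*}
is a mean-one martingale, and compute the asymptotics of the $\mathfrak{L}_N$ term. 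Since $G$ has no spatial dependence, the bulk part $N^3\mathcal{L}_{N,0}$ contributes nothing (exchanges do not change $\langle\pi^N,G\rangle$ when $G$ is constant in space), and only the boundary part $N\mathcal{L}_{N,b}$ survives, yielding precisely $\int_0^T A_{G_s}\big(\widehat m(\pi_s^N)\big)\,ds$ after replacing the boundary occupation variables by their local density — which, by a one-block/replacement estimate exploiting the fast $N^3$ bulk dynamics, equals the global mass $\widehat m(\pi_s^N)$. This identifies the dynamical rate function of $\widehat m$ with $I_T(\cdot\,|\,m_0)$ via the variational formula $I_T(a) = \sup_{G\in\mathcal{C}^1([0,T])} J_{T,G}(a)$.

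The second ingredient is the upper bound. Here the standard route is: (a) exponential tightness of $\big(\widehat m(\pi_t^N)\big)_t$ in $\mathcal{D}_{[0,1]}^T$, which follows from exponential tightness for the full empirical measure (already available in \cite{GDnoncritical}) and continuity of the projection, or directly by estimating the oscillations of $\langle\pi^N_t,1\rangle$ using the martingale above with a cleverly chosen $G$; (b) the local upper bound: for each trajectory $a$ and each $G$, Chebyshev applied to the mean-one martingale gives
\begin{equation*}
\limsup_{N\to\infty}\frac1N\log \Tilde{\mathbb{P}}_{\delta_{\e^N}}\big[\widehat m(\pi^N_\cdot)\approx a\big] \;\le\; -\,J_{T,G}(a)\,,
\end{equation*}
and optimizing over $G$ plus the usual covering-by-neighbourhoods and minimax argument upgrades this to compact sets and then, via exponential tightness, to all closed sets. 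One must be careful that the replacement lemma ("boundary density $\approx$ global mass") holds in a super-exponential sense, i.e. with an error that is $o(N)$ in the exponent uniformly — this is where the separation of scales $N^3 \gg N$ is essential and is, I expect, the main technical obstacle.

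For the lower bound I would follow the Donsker–Varadhan/tilting strategy: given a "nice" target trajectory $a$ (say $a\in\mathcal{C}^1$ bounded away from $0$ and $1$, with $a(0)=m_0$), construct the associated optimal perturbation $G=G(t)$ solving $\dot a = -A_{G}(a)\big|_{\text{sign adjusted}}$, i.e. the $G$ achieving the supremum in $I_T(a)$, and consider the time-inhomogeneous Markov chain obtained by tilting the boundary rates of $\mathfrak{L}_N$ by $e^{\pm G}$. Under this tilted measure, a law of large numbers (a perturbation of Theorem \ref{LGNprocess}) shows $\widehat m(\pi^N_\cdot)\to a$, and a change-of-measure/entropy computation identifies the Radon–Nikodym derivative's exponential rate with $I_T(a)$; the density of such nice trajectories in $\mathcal{D}_{[0,1]}^T$ for the rate function (an $I_T$-density argument, standard once $I_T$ is shown to be convex and lower semicontinuous with the right approximation properties) then yields the lower bound for all open sets. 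Throughout, I would lean on the fact that the scalar rate function $I_T$ here is much simpler than its measure-valued counterpart $\Tilde I_T$: $A_G$ is an explicit smooth function of $(a,G)$, so the $I_T$-density of regular paths and the identification of minimizers can be done by elementary ODE arguments rather than the PDE machinery of \cite{GDnoncritical}.
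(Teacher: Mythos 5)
Your proposal is correct and, despite the opening framing as a "contraction" from Theorem \ref{GDNoncritical}, what you actually execute is exactly the paper's direct argument: an exponential martingale with time-only test functions (bulk part vanishing by conservation), a superexponential replacement of the boundary occupation variables by the global mass using the $N^{3}\gg N$ scale separation, Chebyshev plus minimax and exponential tightness for the upper bound, and tilting of the boundary rates combined with the $I_T$-density of regular paths for the lower bound. This matches the paper's proof in Section 5 step for step.
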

\begin{thm}\label{StaticGD}(Static large deviations principle).
    \begin{itemize}
        \item[(i)] For any closed subset $\mathscr{F}$ of $[0,1]$,
        \begin{equation} \label{upper}
            \underset{N\rightarrow \infty}{\overline{\lim}}~ \frac{1}{N} \log \mu_{ss}^N\left[\widehat{m}(\pi^N)\in \mathscr{F} \right] \leq -\underset{m\in \mathscr{F}}{\inf} V(m).
        \end{equation}
        \item[(ii)] For any open subset $\mathscr{O}$ of $[0,1]$,
         \begin{equation}
            \underset{N\rightarrow \infty}{\underline{\lim}}~ \frac{1}{N} \log \mu_{ss}^N\left[\widehat{m}(\pi^N)\in \mathscr{O} \right] \geq -\underset{m\in \mathscr{O}}{\inf} V(m).
        \end{equation}
    \end{itemize}
\end{thm}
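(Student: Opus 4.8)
The plan is to derive the static principle from the dynamical one (Theorem~\ref{DynamicGD}), the concentration of the invariant measures (Theorem~\ref{LGNmeseq}), and the identification $V=S$ (Lemma~\ref{Identific}), following the classical route from dynamic to static large deviations for boundary-driven systems. The basic observation is that, since $\mu_{ss}^N$ is invariant for the accelerated process $\{\zeta_t\}$, for every $T>0$ and every Borel set $\mathscr{A}\subseteq[0,1]$ one has $\mu_{ss}^N[\,\widehat{m}(\pi^N)\in\mathscr{A}\,]=\widetilde{\mathbb{P}}_{\mu_{ss}^N}[\,\widehat{m}(\pi_T^N)\in\mathscr{A}\,]$; thus both bounds reduce to estimates on the law, at a time $T$ of our choosing, of the mass process started from $\mu_{ss}^N$, and it is the freedom to take $T$ arbitrarily large, together with Theorem~\ref{LGNmeseq} (which, applied with $H\equiv1$, gives $\mu_{ss}^N[\,|\widehat{m}(\pi^N)-\gamma|\le\varepsilon\,]\to1$ for every $\varepsilon>0$), that brings the quasi-potential into play.

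For the lower bound, fix $\mathscr{O}$ open, $m\in\mathscr{O}$ and $\delta>0$. By the definition of $V$ there are $T_\delta<\infty$ and $a^\delta\in\mathcal{C}^1([0,T_\delta])$ with $a^\delta(0)=\gamma$, $a^\delta(T_\delta)=m$ and $I_{T_\delta}(a^\delta)\le V(m)+\delta$. Taking $T=T_\delta$ above and restricting the integration to initial configurations $\e$ with $|\widehat{m}(\pi^N(\e))-\gamma|\le\varepsilon$ (a set of $\mu_{ss}^N$-measure at least $1/2$ for $N$ large), it remains to bound $\widetilde{\mathbb{P}}_{\delta_{\e}}[\widehat{m}(\pi_{T_\delta}^N)\in\mathscr{O}]$ from below uniformly over such $\e$; this follows from the dynamical lower bound applied to a small tube around $a^\delta$ — all of whose endpoints lie in $\mathscr{O}$ — after a harmless interpolation near $t=0$ that bends the initial mass from $\widehat{m}(\pi^N(\e))$ to $\gamma$ at a cost $o_\varepsilon(1)$, the interpolating speed being of order $\varepsilon$. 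One gets $\mu_{ss}^N[\widehat{m}(\pi^N)\in\mathscr{O}]\ge\frac12\,e^{-N(V(m)+2\delta)}$ for $N$ large; sending $\delta\downarrow0$, optimizing over $m\in\mathscr{O}$ and invoking $V=S$ gives part (ii).

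For the upper bound, let $\mathscr{F}$ be closed; if $\gamma\in\mathscr{F}$ the bound is trivial since $\inf_{\mathscr F}V=0$, so assume $d(\gamma,\mathscr{F})>0$ and set $\ell:=\inf_{m\in\mathscr{F}}V(m)<\infty$. Fix $\delta>0$ and a small $\varepsilon>0$, choose $T$ larger than the time needed for the ODE $\partial_t m=-2(m-\gamma)$ to drive any point of $[0,1]$ into the $\varepsilon$-ball of $\gamma$, and, for an integer $k$, write via invariance $\mu_{ss}^N[\mathscr{F}]=\widetilde{\mathbb{P}}_{\mu_{ss}^N}[\widehat{m}(\pi_{kT}^N)\in\mathscr{F}]$. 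Split according to whether $|\widehat{m}(\pi_{jT}^N)-\gamma|\le\varepsilon$ for some $j\in\{0,\dots,k-1\}$. On the complement, the Markov property and the dynamical upper bound give the estimate $\exp\{-N(k-1)(c^\ast-o(1))\}$ with $c^\ast:=\inf\{I_T(a):|a(0)-\gamma|\ge\varepsilon,\ |a(T)-\gamma|\ge\varepsilon\}>0$, the positivity coming from the choice of $T$ (such a path cannot follow the relaxation ODE). On the other alternative, restarting from the last such time $jT$ and using the dynamical upper bound again yields at most $k\exp\{-N(V_\varepsilon(\mathscr{F})-o(1))\}$, where $V_\varepsilon(\mathscr{F}):=\inf\{I_{T'}(a):T'>0,\ |a(0)-\gamma|\le\varepsilon,\ a(T')\in\mathscr{F}\}$. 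Picking $k$ with $(k-1)c^\ast>\ell$ and then $\varepsilon$ so small that $V_\varepsilon(\mathscr{F})\ge\ell-\delta$ — a base-point continuity of the quasi-potential, immediate from the explicit formula $V=S$ — gives $\mu_{ss}^N[\mathscr{F}]\le2\,e^{-N(\ell-\delta)}$ for $N$ large, and $\delta\downarrow0$ proves part (i).

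The main obstacle is the upper bound: Theorem~\ref{LGNmeseq} only yields convergence in probability, not a super-exponential estimate, for $\mu_{ss}^N$ away from $\gamma$, so one cannot simply condition at time $0$ on $\{|\widehat{m}(\pi^N)-\gamma|\le\varepsilon\}$; the resolution is precisely the bootstrap above, in which iterating the dynamical upper bound over $k$ windows of length $T$ forces any trajectory that never approaches $\gamma$ to pay an arbitrarily large cost. The remaining ingredients — a version of the dynamical large deviations bounds uniform over initial configurations whose mass lies in a small window around a prescribed value, and the continuity of $V_\varepsilon(\mathscr{F})$ as $\varepsilon\downarrow0$ — are routine here because $I_T$ and $V$ are completely explicit.
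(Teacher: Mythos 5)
Your lower bound is essentially the paper's argument (invariance, restriction to initial configurations whose mass is close to $\gamma$ via Theorem \ref{LGNmeseq}, then the dynamical lower bound), so there is nothing to add there. Your upper bound, however, takes a genuinely different route. The paper represents $\mu_{ss}^N$ through an embedded Markov chain on a boundary set $\partial B_{\delta}^N$ (the Freidlin--Wentzell / Bodineau--Giacomin / Farf\'an renewal formula), reduces the estimate to $\sup_{\e\in R_\delta^N}\Tilde{\mathbb{P}}_{\delta_\e}[\tau_{\mathfrak{F}^N}<\tau_{B_\delta^N}]$ and $\sup_{\e\in\mathfrak{F}^N}\Tilde{\mathbb{E}}_{\delta_\e}[\tau_{B_\delta^N}]$, and must in addition verify irreducibility of the embedded chain and track an $N^3$ prefactor. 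You instead use invariance at time $kT$ and decompose over whether the mass visits the $\varepsilon$-neighbourhood of $\gamma$ at one of the deterministic times $jT$; the ``never visits'' branch is killed by iterating the one-window bound $(k-1)$ times, and the other branch pays $V_\varepsilon(\mathscr{F})$. This avoids the embedded-chain construction entirely, at the price of needing the dynamical upper bound uniformly over initial configurations in a mass window — which, as in the paper, is obtained by taking the maximizing configuration (finite state space) and extracting a convergent subsequence of its mass. Note that both approaches ultimately rest on the same two estimates, which you should not understate: the positivity of $c^*$ is exactly the paper's Lemma \ref{interm} and requires the compactness of level sets (Lemma \ref{Levelset}) together with Corollary \ref{Strongsol}, not just the remark that such a path ``cannot follow the relaxation ODE''; and the convergence $V_\varepsilon(\mathscr{F})\to\inf_{\mathscr{F}}V$ is not a consequence of the formula $V=S$ but of the fact that connecting $\gamma$ to a point at distance $\varepsilon$ costs $o_\varepsilon(1)$ (Lemma \ref{ControlI1}) combined with the additivity property \eqref{DecompoI} — precisely the contradiction argument closing the paper's proof of Lemma \ref{Lemprinc}(ii). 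With these two points made explicit, your argument is complete and arguably more elementary than the one in the paper.
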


We prove Theorem \ref{LGNprocess} in section 3, Theorem \ref{DynamicGD} in Section 5
and Theorem \ref{StaticGD} in Section 7.

\section{Hydrodynamic limits}

\subsection*{Dynamical law of large numbers for the accelerated
process}

To prove Theorem \ref{LGNprocess}, which is a pointwise hydrodynamic
result, we first establish the hydrodynamic limit of the total mass
(Proposition \ref{LGNmass}). The latter is also stated pointwisely but
we prove that the convergence of the mass trajectory holds in law,
which yields the pointwise convergence. For that, we follow the standard
steps. First we prove tightness of the sequence of probability
measures induced by the mass process (Lemma \ref{tightness}). Then, we
perform a superexponential replacemment lemma (Lemma \ref{RL}) which
will also be used for the proof of the large deviations principle. For
that, we use some Dirichlet estimates (Lemma \ref{Ineg}).

Fix $\rho\in (0,1)$ and define
$$\mathbb{D}_N(f) = \int_\zeta\sum_{x=1}^{N-2}\left( \sqrt{f(\zeta^{x,x+1})} - \sqrt{f(\zeta)}  \right)^2d\nurhon(\zeta), $$
and
$$D_{N,b}(f) = \int_{\zeta} \sum_{x\in\{1,N-1\}}\left[r_x(1-\zeta(x))+(1-r_x)\zeta(x) \right]\left(\sqrt{f(\zeta^x)}-\sqrt{f(\zeta)} \right)^2d\nurhon(\zeta),$$
for $f:\Omega_N \rightarrow \R$. In this formula, $\nurhon$ represents the
Bernoulli product measure on $\Omega_N$ with density $\rho$.

Fix a constant $m_0\in [0,1]$. We say that a sequence of probability measures $(\mu_N)_{N\geq 1}$ on $\Omega_N$ is associated to the mass $m_0$ if, for any $\delta>0$,
\begin{equation*}
    \underset{N \rightarrow \infty}{\lim} ~\mu_N\left[ \left|\widehat{m}(\pi^N) -m_0 \right|>\delta \right]=0.
\end{equation*}
\begin{prop} \label{LGNmass}(Hydrodynamic limit for the total mass).
Fix $m_0\in [0,1]$ and consider a sequence of measures $(\mu_N)_{N\geq 1}$ on $\Omega_N$ associated to the mass $m_0$. Then, for any $t\in [0,T]$ and any $\delta> 0$,
    $$\underset{N\rightarrow \infty}{\lim}~ \Tilde{\mathbb{P}}_{\mu_N}\left[~ \Big|\widehat{m}(\pi_t^N) -m(t)\Big|> \delta \right] =0, $$
 where $m:[0,T] \rightarrow [0,1]$ is the unique solution of \eqref{EDPmasse} with $m(0)=m_0$.
 \end{prop}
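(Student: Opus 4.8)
The plan is to establish the result via the standard route for hydrodynamic limits: show tightness of the laws of $(\widehat m(\pi^N_t))_{0\le t\le T}$ in $\mathcal D_{[0,1]}^T$, characterize the limit points as concentrated on solutions of \eqref{EDPmasse}, and conclude by uniqueness of that ODE together with the $t=0$ hypothesis. Since convergence in law to a deterministic trajectory upgrades to convergence in probability, and since projection at a fixed time $t$ is continuous on $\mathcal D_{[0,1]}^T$ (at continuity points, and the limit is continuous), this yields the pointwise statement. First I would produce a martingale identity: for a smooth test function $G\in C^1([0,T])$, Dynkin's formula applied to $\langle \pi^N_t, G_t\rangle$ under $\mathfrak L_N = N^3\mathcal L_{N,0} + N\mathcal L_{N,b}$ gives that
\begin{equation*}
M^{N,G}_t \;=\; \langle \pi^N_t, G_t\rangle - \langle \pi^N_0, G_0\rangle - \int_0^t \big(\partial_s + \mathfrak L_N\big)\langle \pi^N_s, G_s\rangle\, ds
\end{equation*}
is a martingale. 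Because the bulk exchange part conserves the number of particles, $N^3\mathcal L_{N,0}$ contributes, after summation by parts, a discrete Laplacian term that vanishes identically when $G\equiv 1$; taking $G\equiv 1$ reduces everything to the boundary generator. Explicitly, $N\mathcal L_{N,b}\,\widehat m(\pi^N) = \tfrac1N\,[\,\alpha(1-\zeta(1)) - (1-\alpha)\zeta(1) + \beta(1-\zeta(N-1)) - (1-\beta)\zeta(N-1)\,]\cdot N$, and taking expectations one sees the drift of $\widehat m(\pi^N_t)$ is $-2(\widehat m(\pi^N_t) - \gamma)$ up to a replacement: we must replace the single-site occupations $\zeta(1),\zeta(N-1)$ by the empirical mass. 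This is exactly the role of the superexponential replacement Lemma \ref{RL} (itself resting on the Dirichlet estimates of Lemma \ref{Ineg}): with the factor $N$ speeding up the boundary and $N^3$ in the bulk, the local density near a boundary site equilibrates to the global mass, so $\mathbb E_{\mu_N}\!\int_0^T |\zeta_s(1) - \widehat m(\pi^N_s)|\,ds \to 0$ and likewise at $N-1$.

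For tightness I would invoke the Aldous criterion for $\mathcal D_{[0,1]}^T$: $\widehat m(\pi^N_t)\in[0,1]$ automatically, so only the modulus-of-continuity bound is needed, and this follows from the martingale decomposition — the bounded-variation part has bounded derivative (the drift is bounded), and the quadratic variation of $M^{N,1}_t$ is $O(1/N)$, since each boundary flip changes $\widehat m(\pi^N)$ by $1/N$ and occurs at rate $O(N)$, giving $\langle M^{N,1}\rangle_T = O(1/N)\to 0$. Hence $M^{N,1}_t\to 0$ uniformly in probability. Passing to the limit along a convergent subsequence, writing $m(\cdot)$ for a limit point, the replacement lemma lets us identify
\begin{equation*}
m(t) \;=\; m(0) \;-\; 2\int_0^t \big(m(s) - \gamma\big)\, ds,
\end{equation*}
the integral form of \eqref{EDPmasse}; since the initial data hypothesis forces $m(0) = m_0$ and this linear ODE has a unique solution $m(t) = \gamma + (m_0-\gamma)e^{-2t}$, every subsequential limit is this deterministic path, which proves convergence in law and therefore in probability.

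The main obstacle is the replacement step: unlike the classical diffusive scaling where one replaces a bulk occupation by a mesoscopic spatial average via the two-blocks estimate, here we need to replace a \emph{boundary} occupation variable by the \emph{global} empirical mass, which is only legitimate because the bulk has been accelerated enough ($N^3$ against the boundary's $N$) that the profile is instantaneously flat. Making this quantitative requires the spectral-gap/Dirichlet-form estimates of Lemma \ref{Ineg}: one bounds $\mathbb E_{\mu_N}\!\int_0^T (\zeta_s(1) - \widehat m(\pi^N_s))^2\,ds$ by controlling $\langle (-\mathfrak L_N) f, f\rangle_{\nu_\rho^N}$-type quantities and using that $N^3\mathbb D_N$ dominates the appropriate gradient, so the cost of flattening the profile to within $\varepsilon$ is $o(N)$ on the exponential scale — more than enough for the law-of-large-numbers statement, where we only need $o(1)$ in $L^1$. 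I would also need to check the entropy of $\mu_N$ with respect to $\nu_\rho^N$ is $O(N)$ (true for product-type or Dirac initial data after one step of the dynamics, or handled by the usual entropy-production argument), so that the relative-entropy method converting Dirichlet bounds into replacement bounds applies. Everything else — Dynkin, Aldous, uniqueness of the linear ODE — is routine.
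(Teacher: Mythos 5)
Your proposal is correct and follows essentially the same route as the paper: Dynkin's formula for the mass (the bulk part vanishing by conservation), tightness via the $O(1/N)$ quadratic variation of the boundary martingale, the superexponential replacement of $\zeta_s(1),\zeta_s(N-1)$ by $\widehat m(\pi^N_s)$ via Lemma \ref{RL} and the Dirichlet estimates of Lemma \ref{Ineg}, and identification of limit points with the unique solution of the linear ODE. The only cosmetic slip is phrasing the replacement as an $L^1$ bound on $\int_0^T|\zeta_s(1)-\widehat m(\pi^N_s)|\,ds$ with the absolute value inside the integral, which is stronger than what Lemma \ref{RL} gives (and stronger than what is needed); the weighted time-integral form with the absolute value outside suffices exactly as in the paper.
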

\begin{lem} \label{tightness}
    For any sequence of measures $(\mu_N)_{N\geq 1}$ on $\Omega_N$, the sequence of probability measures $(\Tilde{\mathbb{P}}_{\mu_N})_{N\geq 1}$ induced by $\widehat{m}(\pi_t^N)$ when $\e_0 \sim \mu_N $, is tight for the Skorohod topology. Moreover, all limit points are concentrated on continuous paths.
\end{lem}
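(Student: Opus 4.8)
The plan is to apply the Aldous--Rebolledo tightness criterion. Since the state space $[0,1]$ is compact, the compact containment condition of the criterion holds automatically, so the only thing to establish is a uniform control of the oscillations of $\widehat{m}(\pi_t^N)$ over small time intervals bounded by stopping times. The underlying mechanism is that the total mass is, up to a martingale that is negligible in $L^2$, a Lipschitz function of time with a Lipschitz constant that does not depend on $N$.

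Concretely, I would start from the Dynkin martingale
\begin{equation*}
M_t^N \;=\; \widehat{m}(\pi_t^N) \;-\; \widehat{m}(\pi_0^N) \;-\; \int_0^t (\mathfrak{L}_N \widehat{m})(\pi_s^N)\, ds \;.
\end{equation*}
Because the exchange part $\mathcal{L}_{N,0}$ conserves the number of particles, $\mathcal{L}_{N,0}\widehat{m}\equiv 0$, so only the boundary generator contributes, and a direct computation gives $(\mathcal{L}_{N,b}\widehat{m})(\zeta) = \tfrac1N\big[\ab - \zeta(1) - \zeta(N-1)\big]$, hence
\begin{equation*}
(\mathfrak{L}_N \widehat{m})(\zeta) \;=\; N\,(\mathcal{L}_{N,b}\widehat{m})(\zeta) \;=\; \ab \,-\, \zeta(1)\,-\,\zeta(N-1)\;,
\end{equation*}
which is bounded, uniformly in $N$ and $\zeta$, by $C_0 := \ab + 2$; in particular the time integral appearing in $M_t^N$ is $C_0$-Lipschitz in $t$. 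For the quadratic variation I would use $\langle M^N\rangle_t = \int_0^t \big[ \mathfrak{L}_N (\widehat{m}^2) - 2\,\widehat{m}\,\mathfrak{L}_N\widehat{m}\big](\pi_s^N)\,ds$; once more only the two boundary flips contribute, and since each of them modifies $\widehat m$ by exactly $\pm 1/N$, the terms linear in $\widehat m$ cancel and one is left with a bound of order $1/N$, namely $\langle M^N \rangle_t \le C_1 t/N$ with $C_1$ depending only on $\alpha,\beta$. By Doob's maximal inequality, $\Tilde{\mathbb{E}}_{\mu_N}\big[ \sup_{0\le t\le T} (M_t^N)^2 \big] \le 4\,\Tilde{\mathbb{E}}_{\mu_N}[\langle M^N\rangle_T] \le 4 C_1 T/N \to 0$.

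To verify the Aldous condition, fix $\varepsilon>0$, a stopping time $\tau\le T$ and $\theta\in[0,\delta]$, and write
\begin{equation*}
\widehat{m}(\pi_{\tau+\theta}^N) - \widehat{m}(\pi_{\tau}^N) \;=\; \big(M_{\tau+\theta}^N - M_\tau^N\big) \;+\; \int_\tau^{\tau+\theta}(\mathfrak{L}_N\widehat{m})(\pi_s^N)\,ds\;.
\end{equation*}
The integral is at most $C_0\delta$ in absolute value, hence $<\varepsilon/2$ as soon as $\delta < \varepsilon/(2C_0)$; for the martingale increment, $|M_{\tau+\theta}^N - M_\tau^N|\le 2\sup_{t\le T}|M_t^N|$, so by Chebyshev and the Doob bound $\Tilde{\mathbb{P}}_{\mu_N}\big[\,|M_{\tau+\theta}^N - M_\tau^N|>\varepsilon/2\,\big]\le 64\, C_1 T/(\varepsilon^2 N)\to 0$, uniformly in $\tau$ and $\theta\le\delta$. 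Together with the automatic compact containment this yields tightness of the sequence of laws of $(\widehat{m}(\pi_t^N))_{0\le t\le T}$ in $\mathcal{D}_{[0,1]}^T$ for the Skorohod topology. Finally, every jump of $t\mapsto\widehat{m}(\pi_t^N)$ has size exactly $1/N$ (bulk exchanges leave the mass unchanged, a boundary flip changes it by $\pm1/N$), so $\sup_{0\le t\le T}\big|\widehat{m}(\pi_t^N) - \widehat{m}(\pi_{t^-}^N)\big| \le 1/N \to 0$; combined with tightness this forces every limit point to be concentrated on $C([0,T];[0,1])$.

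I do not expect a real obstacle in this argument: the computations are routine, and the only point that deserves care is the cancellation of the terms linear in $\widehat m$ in the carr\'e du champ, which is what produces the $O(1/N)$ bound on $\langle M^N\rangle$ and thereby turns the total mass into an essentially deterministic, Lipschitz-in-time process.
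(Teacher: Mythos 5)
Your proposal is correct and follows essentially the same route as the paper: the Dynkin decomposition of $\widehat{m}(\pi_t^N)$ into a uniformly bounded (hence Lipschitz) drift coming only from the boundary generator, plus a martingale whose quadratic variation is $O(1/N)$ and is killed by Doob's inequality. The only cosmetic difference is that you verify the Aldous condition with stopping times while the paper checks the uniform modulus-of-continuity criterion directly, and you make explicit the $1/N$ bound on the jump sizes to get concentration on continuous paths; both rest on identical estimates.
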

\begin{proof}
It is enough to show that for any $\varepsilon>0$,
\begin{equation}\label{tight}
    \underset{\delta \rightarrow 0}{\lim}~ \underset{N \rightarrow \infty}{\limsup}~ \Tilde{\mathbb{P}}_{\mu_N}\left[ \underset{|t-s|\leq \delta}{\sup}~ |\widehat{m}(\pi_t^N) -  \widehat{m}(\pi_s^N)|>\varepsilon\right]=0.
\end{equation}
By Dynkin's formula (see \cite[Appendix 1]{KL}),
\begin{equation*}\label{Dynk}
    M_t^N= \widehat{m}(\pi_t^N) - \widehat{m}(\pi_0^N) - N^3\inte \mathcal{L}_{N,0}\widehat{m}(\pi_s^N)\mathrm{d}{s} - N\inte \mathcal{L}_{N,b}\widehat{m}(\pi_s^N)\mathrm{d}{s}
\end{equation*}
is a martingale with respect to the natural filtration $\mathcal{F}_t:=\sigma(\e_s,~ s\leq t)$. As the bulk dynamics is conservative,
$$\mathcal{L}_{N,0}\widehat{m}(\pi_s^N)=0.$$
Also, computations yield:
$$N\mathcal{L}_{N,b}\widehat{m}(\pi_s^N) = \alpha - \zeta_s(1) + \beta - \zeta_s(N-1),  $$
so
\begin{equation*}
    M_t^N= \widehat{m}(\pi_t^N) - \widehat{m}(\pi_0^N) - \inte(\alpha + \beta -\zeta_s(1)-\zeta_s(N-1))\mathrm{d}{s}.
\end{equation*}
Therefore, \eqref{tight} holds if
\begin{equation}\label{tight1}
    \underset{\delta \rightarrow 0}{\lim}~ \underset{N \rightarrow \infty}{\limsup}~\Tilde{\mathbb{E}}_{\mu_N} \left[\underset{|t-s|\leq \delta}{\sup}~\Big|M_t^N-M_s^N \Big| \right] = 0
\end{equation}
and
\begin{equation}\label{tight2}
    \underset{\delta \rightarrow 0}{\lim}~ \underset{N \rightarrow \infty}{\limsup}~ \Tilde{\mathbb{E}}_{\mu_N}\left[\underset{|t-s|\leq \delta}{\sup}~\Big|\int_s^t \big(\alpha + \beta - \zeta_r(1) - \zeta_r(N-1)\big)\mathrm{d}{r}  \Big| \right] = 0.
\end{equation}
Denote by $\langle M_t^N\rangle$ the quadratic variation of $M_t^N$. By Doob's inequality,
$$\Tilde{\mathbb{E}}_{\mu_N} \left[\underset{|t-s|\leq \delta}{\sup}~\Big|M_t^N-M_s^N \Big| \right] \leq 2\Tilde{\mathbb{E}}_{\mu_N} \left[\underset{0\leq t\leq T}{\sup}~\Big|M_t^N\Big| \right] \leq 4 \Tilde{\mathbb{E}}_{\mu_N}\left[\langle M_T^N\rangle \right]^{1/2}. $$
Dynkin's formula and the fact that the bulk dynamics is conservative yields the following expression for $\langle M_t^N\rangle $:
\begin{equation}
\label{Varquadra}
\begin{split}
\langle M_t^N\rangle &= N \int_0^t\left[\mathcal{L}_{N,b}\widehat{m}(\pi_s^N) ^2-2\widehat{m}(\pi_s^N) \mathcal{L}_{N,b}\widehat{m}(\pi_s^N)  \right]\mathrm{d}{s}\\
&=  \frac{1}{N}\inte \sum_{x\in \{1,N-1\}}
\left[r_x\zeta_s(x)+(1-r_x)(1-\zeta_s(x)) \right] \, ds\;.
\end{split}
\end{equation} 
Hence, as $\zeta_s(x)$ is bounded,
$\langle M_t^N\rangle \underset{N \rightarrow \infty}{
\longrightarrow} 0, $ as claimed in \eqref{tight1}. To prove
\eqref{tight2}, simply note that for $0\leq s \leq t$,
$N\mathcal{L}_{N,0}\widehat{m}(\pi_s^N) = \alpha + \beta
-\zeta_s(1)-\zeta_s(N-1)$ is uniformly bounded.
\end{proof}

\begin{lem}(Superexponential Replacemment Lemma). 
\label{RL}
Given $g\in \mathcal{C}([0,T])$, for any sequence of probability
measures $\mu_N$ on $\Omega_N$, $t\in [0,T]$, $\delta>0$ and
$x\in \{1,N-1\}$,
    $$ \underset{N\rightarrow \infty}{\overline{\lim}}~  \frac{1}{N} \log ~ \Tilde{\mathbb{P}}_{\mu_N}\left[~ \Big|\inte g(s)(\zeta_s(x)-\widehat{m}(\pi_s^N) )\mathrm{d}{s} \Big|>\delta \right] = -\infty.$$
\end{lem}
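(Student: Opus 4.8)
The plan is to prove this superexponential estimate via the standard entropy/Feynman--Kac route, exploiting the fact that site $x\in\{1,N-1\}$ is a boundary site whose occupation variable is flipped at rate of order $N$ under the accelerated dynamics $\mathfrak L_N = N^3\LL_{N,0}+N\LL_{N,b}$. First I would reduce, by the exponential Chebyshev inequality and a standard $\limsup$-of-sums argument (it suffices to treat the positive part, then the negative part), to showing that for every $a>0$,
\begin{equation*}
\lims \frac1N \log \Tilde{\E}_{\mu_N}\Big[\exp\Big(aN\int_0^t g(s)\big(\zeta_s(x)-\widehat m(\pi_s^N)\big)\mathrm ds\Big)\Big] \;\le\; 0\,,
\end{equation*}
uniformly in the initial measure $\mu_N$ (the uniformity lets one replace $\mu_N$ by $\nurhon$ up to the entropy $H(\mu_N\mid\nurhon)\le CN$, which is $o(N)$ after dividing by $N$ — actually $O(N)$, contributing a constant, but one gets $-\infty$ by first inserting an extra large parameter; I will instead run the bound with a free constant $a$ and let $a\to\infty$ at the end). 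By the Feynman--Kac formula this exponential moment is bounded by $\exp\big(\int_0^t \lambda_s^N\,\mathrm ds\big)$ where $\lambda_s^N$ is the largest eigenvalue of the symmetrized perturbed generator $\mathfrak L_N + aN g(s)\big(\zeta(x)-\widehat m(\pi^N)\big)$ in $L^2(\nurhon)$, i.e.
\begin{equation*}
\lambda_s^N \;=\; \sup_f\Big\{\, aNg(s)\!\int\!\big(\zeta(x)-\widehat m(\pi^N)\big) f\,\mathrm d\nurhon \;-\; N^3\,\mathbb D_N(f)\;-\;N\,D_{N,b}(f)\,\Big\}\,,
\end{equation*}
the supremum over densities $f$ with respect to $\nurhon$.

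The heart of the matter is then to show $\lambda_s^N = o(N)$. The key observation is that the variable $\zeta(x)$ at a single boundary site can be controlled by the boundary Dirichlet form alone: flipping $\zeta(x)$ costs $D_{N,b}(f)$, so by the elementary inequality (the same one used to prove superexponential replacement in the bulk) one has, for any $A>0$,
\begin{equation*}
\Big|\int \big(\zeta(x)-\tfrac{\alpha+\beta}{2}\big)\, (\text{flip-related difference})\, f\,\mathrm d\nurhon\Big| \;\le\; \frac{A}{N}\,D_{N,b}(f) \;+\; \frac{C}{A}\,N\,,
\end{equation*}
and absorbing the first term into $N\,D_{N,b}(f)$ (choosing $A$ of order $1$) leaves a term of order $N/A$ — still order $N$, not $o(N)$. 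To beat this I would genuinely use the \emph{bulk} Dirichlet form: write $\widehat m(\pi^N)=\frac1N\sum_y\zeta(y)$ and replace $\zeta(x)$ by a local average $\frac1\ell\sum_{y=1}^{\ell}\zeta(y)$ near the boundary at the cost of $\ell^2 \,\mathbb D_N(f)\cdot$(constant), which is absorbed by $N^3\mathbb D_N(f)$ as soon as $\ell\ll N^{3/2}$; then replace that local average by the global average $\widehat m(\pi^N)$ at a cost controlled by a combination of $\mathbb D_N$ and $D_{N,b}$ again. Finally, since $\mathfrak L_N$'s invariant measure $\mu_{ss}^N$ is close to $\nurhon$ only for $\rho=\gamma$, the natural choice is $\rho=\gamma=(\alpha+\beta)/2$ so that $\big\langle \zeta(x)-\widehat m\big\rangle_{\nurhon}=0$, removing the leading term.

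The main obstacle — and the step I would spend the most care on — is the quantitative absorption argument showing $\lambda_s^N=o(N)$: one must choose the block size $\ell=\ell(N)$ and the Lagrange-type constants so that the "replacement of $\zeta(x)$ by $\widehat m(\pi^N)$" error, which involves telescoping across $\sim N$ sites and crucially the acceleration factor $N^3$ on the bulk form, is controlled by $N^3\mathbb D_N(f)+N D_{N,b}(f)$ with a prefactor that can be made arbitrarily small, leaving only a lower-order remainder times $N$ that vanishes after dividing by $N$ and sending the free parameter $A\to\infty$. This is exactly where the very strong acceleration ($N^3$ in the bulk versus $N$ at the boundary) does the work: it makes the bulk essentially instantaneously homogenized at the level of large deviations, so a single boundary occupation variable is pinned to the global mass. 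The remaining pieces — passing from the eigenvalue bound back through Feynman--Kac and Chebyshev, handling the arbitrary initial measure $\mu_N$ via the entropy bound and the fact that we have a genuine $-\infty$ once $a$ is sent to infinity, and the routine continuity of $g$ — are standard (cf.\ \cite[Chapter 10]{KL}) and I would treat them briefly.
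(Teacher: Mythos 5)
Your proposal follows essentially the same route as the paper: exponential Chebyshev with a free constant sent to infinity at the end, the entropy bound $\big|\tfrac{d\mu_N}{d\nu^N_\rho}\big|\le e^{CN}$ to pass to the reference Bernoulli measure, Feynman--Kac, and absorption of the replacement error into the strongly accelerated bulk Dirichlet form. The paper's execution of the key step is more direct than what you sketch: no intermediate local average is needed, and no centering via the choice $\rho=\gamma$ plays any role (any fixed $\rho\in(0,1)$ works). One simply writes, say for $x=N-1$, $\zeta(N-1)-\widehat m(\pi^N)=\tfrac1N\sum_{y}\sum_{i=y}^{N-2}\big(\zeta(i+1)-\zeta(i)\big)$, performs the change of variables $\zeta\to\zeta^{i,i+1}$ and applies Young's inequality, obtaining a bound $\tfrac{B}{4}\mathbb D_N(f)+\tfrac NB$ which is absorbed by the bulk term upon taking $B\sim N^2/c$, leaving a remainder $O(c^2/N)$. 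Two quantitative points in your write-up deserve correction, though neither breaks the argument: (i) the constraint $\ell\ll N^{3/2}$ for your one-block step is not the operative one, and the two-step structure is superfluous since the direct global telescoping already closes; (ii) the $\limsup$ of $\tfrac1N\log$ of the exponential moment is not $\le 0$ but only $\le TC_0$ for a constant $C_0$, because the boundary part of the generator contributes a uniformly bounded term $U_N$ in $\langle\mathcal L_{N,b}\sqrt f,\sqrt f\rangle_{\nu^N_\rho}$ (Lemma \ref{Ineg}(ii)); this is harmless precisely because $C_0$ does not depend on the Chebyshev parameter $c$, so that $C-c\delta+TC_0\to-\infty$ as $c\to\infty$.
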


For the proof of that, we use the following :

\begin{lem}\label{Ineg}
    For any density $f$ with respect to $\nurhon$,
    \begin{itemize}
        \item [(i)] $\langle\mathcal{L}_{N,0}\sqrt{f},\sqrt{f}\rangle _{\nurhon} = -\frac{1}{2}\mathbb{D}_N(f)$
        \item [(ii)] $\langle\mathcal{L}_{N,b}\sqrt{f},\sqrt{f}\rangle _{\nurhon} = -\frac{1}{2}D_{N,b}(f) + U_N  $ where $(U_N)_{N\geq 1}$ is a uniformly bounded sequence.
    \end{itemize}
\end{lem}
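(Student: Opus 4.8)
The plan is to compute directly the Dirichlet form of each piece of the generator against a density $f$ (with respect to $\nurhon$), exploiting the reversibility of the bulk dynamics and the near-reversibility of the boundary dynamics. For part (i), I would write
$\langle \mathcal{L}_{N,0}\sqrt{f},\sqrt{f}\rangle_{\nurhon}
= \sum_{x=1}^{N-2} \int \big(\sqrt{f(\zeta^{x,x+1})}-\sqrt{f(\zeta)}\big)\sqrt{f(\zeta)}\, d\nurhon$,
and then symmetrize: since $\nurhon$ is the Bernoulli product measure, it is invariant under the exchange $\zeta\mapsto\zeta^{x,x+1}$, so the change of variables $\zeta\mapsto\zeta^{x,x+1}$ in the integral gives the same expression with $\sqrt{f(\zeta)}$ replaced by $\sqrt{f(\zeta^{x,x+1})}$. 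Averaging the two representations yields
$\langle \mathcal{L}_{N,0}\sqrt{f},\sqrt{f}\rangle_{\nurhon}
= -\tfrac12\sum_{x=1}^{N-2}\int\big(\sqrt{f(\zeta^{x,x+1})}-\sqrt{f(\zeta)}\big)^2 d\nurhon
= -\tfrac12\, \mathbb{D}_N(f)$,
which is exactly the claim. The only subtlety is that $\nurhon$ really is exchange-invariant, which holds because it is a product of identical Bernoulli marginals.

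For part (ii) the same symmetrization idea applies, but $\nurhon$ is \emph{not} invariant under the spin flip $\zeta\mapsto\zeta^x$ at $x\in\{1,N-1\}$, so a correction term appears. Writing
$\langle \mathcal{L}_{N,b}\sqrt{f},\sqrt{f}\rangle_{\nurhon}
= \sum_{x\in\{1,N-1\}}\int c_x(\zeta)\big(\sqrt{f(\zeta^x)}-\sqrt{f(\zeta)}\big)\sqrt{f(\zeta)}\, d\nurhon$
with $c_x(\zeta)=r_x(1-\zeta(x))+(1-r_x)\zeta(x)$, I would perform the change of variables $\zeta\mapsto\zeta^x$ in each term. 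Under this change $d\nurhon(\zeta^x) = \Phi_x(\zeta)\, d\nurhon(\zeta)$ where $\Phi_x(\zeta) = \rho/(1-\rho)$ if $\zeta(x)=0$ and $(1-\rho)/\rho$ if $\zeta(x)=1$, and $c_x(\zeta^x)$ is obtained from $c_x(\zeta)$ by swapping the roles of the two rates. Symmetrizing, the leading part reconstructs $-\tfrac12 D_{N,b}(f)$ (possibly with slightly perturbed rates), and all remaining terms are collected into $U_N$. The key point is that $U_N$ is \emph{uniformly bounded in $N$}: every term in the correction involves a single boundary site, the prefactors $\rho,1-\rho,\alpha,\beta$ are fixed constants in $(0,1)$, and $f$ is a density so $\int f\, d\nurhon = 1$; hence bounds like $\int \sqrt{f(\zeta^x)}\sqrt{f(\zeta)}\, d\nurhon \le 1$ (Cauchy--Schwarz together with flip-invariance of $\int f(\zeta^x) d\nurhon$ up to the bounded Radon--Nikodym factor) control everything by a constant depending only on $\alpha,\beta,\rho$ but not on $N$ or $f$.

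The main obstacle is bookkeeping rather than conceptual: one must carry the Radon--Nikodym factors $\Phi_x$ through the symmetrization carefully and verify that, after the exact square $-\tfrac12 D_{N,b}(f)$ is extracted, what is left is a finite sum (over the two boundary sites) of terms each bounded by an absolute constant times $\int f\, d\nurhon = 1$. I would organize the computation so that the ``perfect square'' $\big(\sqrt{f(\zeta^x)}-\sqrt{f(\zeta)}\big)^2$ is produced first by averaging the original and flipped representations, and then estimate the residual cross terms using $2\sqrt{ab}\le a+b$ together with the normalization of $f$; this makes the uniform bound on $U_N$ transparent and avoids any dependence on $N$.
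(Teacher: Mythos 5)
Your proposal is correct and follows essentially the same route as the paper: part (i) via the exchange-invariance of the Bernoulli product measure, and part (ii) via the flip change of variables $\zeta\mapsto\zeta^x$, extraction of the square $-\tfrac12 D_{N,b}(f)$, and absorption of the residual (controlled by the bounded rates, the bounded Radon--Nikodym factor $\bigl(\rho/(1-\rho)\bigr)^{1-2\zeta(x)}$, and $\int f\,d\nurhon=1$) into $U_N$. The only cosmetic difference is that the paper produces the exact square directly from the identity $(\sqrt a-\sqrt b)\sqrt b=-\tfrac12(\sqrt a-\sqrt b)^2+\tfrac12(a-b)$ rather than by averaging two representations, but the residual estimates are the same.
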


\begin{proof}
\noindent
    \begin{itemize}
    \item [(i)] This point comes from the invariance of $\nurhon$ under the dynamics generated by $\mathcal{L}_{N,0}$ and a change of variables.
    \item[(ii)] Developing the term $\langle\mathcal{L}_{N,b}\sqrt{f},\sqrt{f}\rangle _{\nurhon} $ and performing the change of variables $\zeta \rightarrow \zeta^x$ we get that
    \begin{align*}
        \langle\mathcal{L}_{N,b}\sqrt{f},\sqrt{f}\rangle _{\nurhon} &= -\frac{1}{2}D_{N,b}(f) +  \frac{1}{2}\int\sum_{x\in \{1,N-1\}}\left[r_x\zeta(x)+(1-r_x)(1-\zeta(x)) \right]f(\zeta)\left(\frac{\rho}{1-\rho} \right)^{1-2\zeta(x)}d\nurhon(\zeta).
    \end{align*}
     Now, using that $f$ is a density, that the jump rates are bounded and that $\rho\in (0,1)$, the result follows.
\end{itemize}
\end{proof}
\textit{Proof of Lemma \ref{RL}.} Fix $\rho \in (0,1)$ and $x\in \{1,N-1\}$ . For $s\in [0,t]$ and $g\in \mathcal{C}([0,T])$, write $V_s^g(\zeta) =g(s)(\zeta_s(x)-\widehat{m}(\pi_s^N) )$. Using that $\Big|\frac{d\mu_N}{d\nurhon}\Big|\leq e^{CN}$
and Tchebychev's inequality, 
\begin{equation}\label{Tcheby}
   \Tilde{\mathbb{P}}_{\mu_N}\left[~ \Big|\inte V_s^g(\zeta)\mathrm{d}{s} \Big|>\delta \right]\leq \exp(-(c\delta-C) N)\times \Tilde{\mathbb{E}}_{\nurhon}\left[ \exp\left( cN\Big|\inte V_s^g(\zeta)\mathrm{d}{s}\Big|\right)\right],
\end{equation}
for any $c>0$.
Therefore, 
\begin{equation} \label{Bigproba}
    \frac{1}{N} \log~ \Tilde{\mathbb{P}}_{\mu_N}\left[~ \Big|\inte V_s^g(\zeta)\mathrm{d}{s} \Big|>\delta \right] \leq C-c\delta + \frac{1}{N}\log ~ \Tilde{\mathbb{E}}_{\nurhon}\left[ \exp\left( cN\Big|\inte V_s^g(\zeta)\mathrm{d}{s}\Big|\right)\right].
\end{equation}
Let us deal with the last term. As $e^{|x|}\leq e^x + e^{-x},$ and
\begin{equation}\label{loginequ}
    \underset{N\rightarrow\infty}{\overline{\lim}}\frac{1}{N}\log(a_N+ b_N) \leq \max\left(\underset{N\rightarrow\infty}{\overline{\lim}}\frac{1}{N}\log a_N,\underset{N\rightarrow\infty}{\overline{\lim}}\frac{1}{N}\log b_N \right), 
\end{equation}
we bound that term without the absolute values. 

Let $V^N(\zeta)= \zeta(x)-\langle 1, \pi^N\rangle$.  By Feynman-Kac's
inequality, stated, for instance in \cite[Appendix 1]{KL}, Lemma
\ref{Ineg} and the fact that $D_{N,b}$ is non negative,
\begin{equation}\label{Bigexpp}
    \begin{split}
        &\Tilde{\mathbb{E}}_{\nurhon}\left[ \exp\left( cN\inte  V_s^g(\zeta)\mathrm{d}{s}\right)\right]\leq \exp\left(\inte \underset{f}{\sup}\Big\{\int cNg(s)V^N(\zeta)f(\zeta)d\nurhon(\zeta) + N^3\langle\mathcal{L}_{N,0}\sqrt{f},\sqrt{f} \rangle  +N\langle\mathcal{L}_{N,b}\sqrt{f},\sqrt{f} \rangle  \Big\}ds \right)\\
        & \leq \exp\left(\inte \underset{f}{\sup}\Big\{\int cN g(s)V^N(\zeta)f(\zeta)d\nurhon(\zeta) - \frac{N^3}{2}\mathbb{D}_N(f)+ N U_N \Big\}ds \right),
    \end{split}
\end{equation}
where the supremum is carried over densities with respect to
$\nurhon$. Without loss of generality, suppose $x=N-1$. Then,
\begin{align*}
    \int_{\zeta}V^N(\zeta)f(\zeta)d\nurhon(\zeta)&= \frac{1}{N}\sum_{y=1}^{N-1}\int_{\zeta}(\zeta(N-1)-\zeta(y))f(\zeta)d\nurhon(\zeta)\\
    &= \frac{1}{N}\sum_{y=1}^{N-1} \sum_{i=y}^{N-2}\int \big(\zeta(i+1)-\zeta(i)\big)f(\zeta)d\nurhon (\zeta)\\
    &= \frac{1}{2N}\sum_{y=1}^{N-1}\sum_{i=y}^{N-2}\int \big(\zeta(i+1)-\zeta(i)\big)\big(f(\zeta)-f(\zeta^{i,i+1})\big)d\nurhon (\zeta),
\end{align*}
where we performed the change of variable $\zeta \rightarrow \zeta^{i,i+1}$ in the last line. Now, write
$$f(\zeta)-f(\zeta^{i,i+1}) =\big(\sqrt{f(\zeta)} -\sqrt{f(\zeta^{i,i+1})}\big)\big(\sqrt{f(\zeta)} +\sqrt{f(\zeta^{i,i+1})} \big).  $$
By Young's inequality and using the fact that $\zeta$ is bounded, the above is less than
\begin{align*}
    &\frac{B}{4N}\sum_{y=1}^{N-1}\sum_{i=y}^{N-2}\int \left(\sqrt{f(\zeta^{i,i+1})}-\sqrt{f(\zeta)} \right)^2d\nurhon(\zeta) + \frac{1}{4NB}\sum_{y=1}^{N-1}\sum_{i=y}^{N-2}\int \left(\sqrt{f(\zeta^{i,i+1})}+\sqrt{f(\zeta)} \right)^2 d\nurhon(\zeta)\\
    &\leq \frac{B}{4N}\sum_{y=1}^{N-1}\mathbb{D}_N(f) + \frac{N}{B},
\end{align*}
for any $B>0$, where we used that $f$ is a density. Taking the log of \eqref{Bigexpp}, dividing it by $N$ and using the above inequality yields
\begin{equation*}
    \begin{split}
        \frac{1}{N}\log ~ \Tilde{\mathbb{E}}_{\nurhon}\left[ \exp\left( cN\inte V_s^g(\zeta)\mathrm{d}{s}\right)\right] &\leq \int_0^t \underset{f}{\sup} \Big\{\frac{c|g(s)|B}{4}\mathbb{D}_N(f) + \frac{cN}{B}|g(s)| - \frac{N^2}{2}\mathbb{D}_N(f) + U_N \Big\}ds\\
        &\leq T ~ \underset{f}{\sup} \Big\{\frac{c\|g\|_{\infty}B}{4}\mathbb{D}_N(f) + \frac{cN}{B}\|g\|_{\infty} - \frac{N^2}{2}\mathbb{D}_N(f) + U_N \Big\}\\
        &\leq T ~ \Big\{\frac{c^2\|g\|^2_{\infty}}{N}+ U_N \Big\},
    \end{split}
\end{equation*}
where to get the last inequality we took $B= \frac{2N^2}{c\|g\|_{\infty}}$. As $U_N$ is uniformly bounded by a constant $C_0$, we are left with
\begin{equation*}
    \lims \frac{1}{N}\log ~ \Tilde{\mathbb{E}}_{\nurhon}\left[ \exp\left( cN\inte V_s^g(\zeta)\mathrm{d}{s}\right)\right] \leq TC_0
\end{equation*}
and by \eqref{Bigproba},
\begin{equation*}
    \lims \frac{1}{N} \log~ \Tilde{\mathbb{P}}_{\nurhon}\left[~
    \Big|\inte V_s^g(\zeta)\mathrm{d}{s} \Big|>\delta \right]  \leq C-
    c \delta + TC_0 .
\end{equation*}
Taking $c\rightarrow +\infty$ yields the result.\\\\

\textit{Proof of Proposition \ref{LGNmass}.} 
From \eqref{Varquadra}, we have that $$\Tilde{\mathbb{E}}_{\mu_N}\left[~ \underset{0\leq t \leq T}{\sup}~ \Big|M_t^N \Big|~ \right] \underset{N\rightarrow \infty}{\longrightarrow} 0,$$
and using Lemma \ref{RL}, for any $\varepsilon>0$, for any $t\in [0,T]$
$$\Tilde{\mathbb{P}}_{\mu_N}\left[~  \Big|\widehat{m}(\pi_t^N) - \widehat{m}(\pi_0^N) - \inte 2(\gamma - \widehat{m}(\pi_s^N))\mathrm{d}{s} \Big|> \varepsilon ~ \right] \underset{N\rightarrow \infty}{\longrightarrow} 0.$$
Since all limit points of the sequence $(\Tilde{\mathbb{P}}_{\mu_N})_{N\geq 1}$ are concentrated on continuous trajectories,
by PortManteau's Lemma, for any limit point $\mathbb{Q}^*$ of $(\Tilde{\mathbb{P}}_{\mu_N})_{N\geq 1}$, for any $\varepsilon>0$ and $t\in [0,T]$,
\begin{equation}\label{Portmant}
    \mathbb{Q}^*\left[~\Big|\widehat{m}(\pi_t) - \widehat{m}(\pi_0) -\inte 2(\gamma - \widehat{m}(\pi_s))\mathrm{d}{s}\Big| > \varepsilon \right]=0.
\end{equation}
Taking $\varepsilon \rightarrow 0$, for any $t\in [0,T]$,
\begin{equation}
    \mathbb{Q}^*\left[~\widehat{m}(\pi_t) - \widehat{m}(\pi_0) =\inte 2(\gamma  - \widehat{m}(\pi_s))\mathrm{d}{s} \right]=1.
\end{equation}
Taking a dense subset of times and using the
right-continuity of the trajectories yields that 
\begin{equation}\label{limitpoint}
    \mathbb{Q}^*\left[~\forall t\in [0,T]~ \widehat{m}(\pi_t) - \widehat{m}(\pi_0) =\inte 2(\gamma- \widehat{m}(\pi_s))\mathrm{d}{s} \right]=1.
\end{equation}
If $(\mu_N)_{N\geq 1}$ is associated to a mass $m_0$,
$$\mathbb{Q}^*\left[~ \forall t\in [0,T]~ \widehat{m}(\pi_t)  =m_0+\inte 2(\gamma- \widehat{m}(\pi_s))\mathrm{d}{s} \right]=1, $$
so $\mathbb{Q}^*$ is the unique measure concentrated on $\delta_{m(t)}$ where $m$ is the unique solution of \eqref{EDPmasse} with $m(0)=m_0$. Convergence in law to a deterministic measure implies convergence in probability so we proved Proposition \ref{LGNmass}. Note that $m$ is explicitly given by
\begin{equation}
\label{f01}
    m(t) = \gamma + \left( m_0-\gamma\right)e^{-2t}.
    \end{equation}
    This completes the proof of the proposition. \qed
    
 Now, we move on to the proof of Theorem \ref{LGNprocess}, the hydrodynamic limit of the empirical measure associated to the accelerated process. The idea is to use the fact that the hydrodynamic limit of the non accelerated process is the heat equation with Neumann boundary conditions (see Theorem \ref{LGNnonacc}) and that the solution of that equation converges in time to a flat profile given by the total mass of the initial condition (Lemma \ref{analyse}). The accelerated process then only evolves according to the hydrodynamic limit of the mass, stated in Proposition \ref{LGNmass}. This is the object of Proposition \ref{hydrossep}.
\begin{prop}\label{hydrossep} Consider $(\mu_N)_{N\geq 1}$ a sequence of probability measures, fix $a>0$ and $H\in \mathcal{C}^{0}([0,1])$. There is a $t_0>0$, depending on $a$ and $H$ such that
\begin{equation} \label{Proba}
    \underset{N\rightarrow \infty}{\overline{\lim}} ~ \Tilde{\mathbb{P}}_{\mu_N}\left[~ \Big|\langle\pi_\frac{t}{N}^N,H \rangle -\widehat{m}(\pi_0^N)\langle1,H\rangle  \Big|>a \right] = 0,
\end{equation}
for any $t\geq t_0$.
\end{prop}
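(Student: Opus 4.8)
\textit{Strategy of the proof.} The plan is to transfer the statement to the non-accelerated process observed at time $t$, invoke the hydrodynamic limit of Theorem \ref{LGNnonacc} together with the relaxation of the Neumann heat equation to a flat profile, and then remove the dependence on the particular sequence $(\mu_N)$ by a compactness argument. Since the accelerated process has generator $\mathfrak{L}_N=N\mathcal{L}_N$, the joint law of $(\zeta_0,\zeta_{t/N})$ under $\Tilde{\mathbb{P}}_{\mu_N}$ coincides with the joint law of $(\e_0,\e_t)$ under $\mathbb{P}_{\mu_N}$; hence the probability in \eqref{Proba} equals $\mathbb{P}_{\mu_N}\big[\,|\langle\pi_t^N,H\rangle-\widehat m(\pi_0^N)\langle 1,H\rangle|>a\,\big]$. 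Conditioning on the initial configuration, this is at most $\sup_{\e\in\Omega_N}h_N(\e)$, where $h_N(\e):=\mathbb{P}_{\delta_\e}\big[\,|\langle\pi_t^N,H\rangle-\widehat m(\pi^N(\e))\langle 1,H\rangle|>a\,\big]$. It therefore suffices to produce $t_0=t_0(a,H)$ such that $\lim_{N\to\infty}\sup_{\e\in\Omega_N}h_N(\e)=0$ for every $t\ge t_0$.

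Next I would make the relaxation of \eqref{heatNeumann} quantitative and uniform over initial data. For a measurable profile $\rho_0\colon[0,1]\to[0,1]$, write $\rho$ for the corresponding solution of \eqref{heatNeumann} and $m_0:=\int_0^1\rho_0(x)\,dx$. Since $\rho_0-m_0$ is orthogonal to the constants and the Neumann Laplacian on $(0,1)$ has spectral gap $\pi^2$ (its eigenfunctions being $\cos(n\pi x)$, $n\ge 0$), one has $\|\rho(t,\cdot)-m_0\|_{L^2(0,1)}\le e^{-\pi^2 t}\|\rho_0-m_0\|_{L^2(0,1)}\le e^{-\pi^2 t}$, where we used $\int_0^1(\rho_0-m_0)^2\le\int_0^1\rho_0\le 1$; this is a quantitative form of Lemma \ref{analyse}. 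By Cauchy--Schwarz, $|\langle\rho(t,\cdot),H\rangle-m_0\langle1,H\rangle|\le e^{-\pi^2 t}\|H\|_{L^2(0,1)}$ for every admissible $\rho_0$. I then fix $t_0$, depending only on $a$ and $H$, so that $e^{-\pi^2 t_0}\|H\|_{L^2(0,1)}<a/4$.

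Now fix $t\ge t_0$ and suppose, for a contradiction, that $\sup_{\e}h_N(\e)$ does not converge to $0$. Then there are $N_k\uparrow\infty$ and $\e^{N_k}\in\Omega_{N_k}$ with $h_{N_k}(\e^{N_k})\ge c>0$. The space $\mathcal{M}$ is compact for the weak topology, and since $\pi^N(\e)(I)\le |I|+1/N$ for every subinterval $I\subset[0,1]$, any subsequential weak limit of $(\pi^{N_k}(\e^{N_k}))_k$ is absolutely continuous with density in $[0,1]$; hence, along a subsequence, $\pi^{N_k}(\e^{N_k})$ converges weakly to some $\rho_0\in\mathcal{M}_0$, and in particular $\widehat m(\pi^{N_k}(\e^{N_k}))\to m_0:=\int_0^1\rho_0$. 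The deterministic sequence $(\delta_{\e^{N_k}})_k$ is associated to $\rho_0$, so Theorem \ref{LGNnonacc} applied with $f=H$ gives $\mathbb{P}_{\delta_{\e^{N_k}}}\big[\,|\langle\pi_t^{N_k},H\rangle-\langle\rho(t,\cdot),H\rangle|>a/2\,\big]\to 0$. Combining this with $|\langle\rho(t,\cdot),H\rangle-m_0\langle1,H\rangle|<a/4$ and $|m_0-\widehat m(\pi_0^{N_k})|\,|\langle1,H\rangle|<a/4$ (valid for $k$ large) and the triangle inequality, the event defining $h_{N_k}(\e^{N_k})$ is contained, for $k$ large, in $\{|\langle\pi_t^{N_k},H\rangle-\langle\rho(t,\cdot),H\rangle|>a/2\}$. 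Consequently $h_{N_k}(\e^{N_k})\to 0$, contradicting $h_{N_k}(\e^{N_k})\ge c$. This gives $\sup_{\e}h_N(\e)\to 0$ for $t\ge t_0$, which yields \eqref{Proba}.

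The main obstacle is the complete arbitrariness of $(\mu_N)$: it forces one to establish the relaxation uniformly over all admissible initial profiles — which is why the spectral-gap estimate, together with the a priori bound $0\le\rho_0\le 1$, is used rather than the plain convergence statement of Lemma \ref{analyse} — and then to feed this uniformity into an arbitrary $(\mu_N)$, which is done through the compactness of $\mathcal{M}$ and the automatic absolute continuity (with density $\le 1$) of weak limits of empirical measures. The remaining ingredients, namely the identification of the accelerated semigroup at time $t/N$ with the non-accelerated one at time $t$ and the triangle-inequality bookkeeping, are routine.
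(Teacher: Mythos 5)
Your proof is correct. It rests on the same two pillars as the paper's argument --- the time-change identification $\zeta_{t/N}\overset{d}{=}\e_t$ and the exponential relaxation of the Neumann heat equation to the flat profile (your spectral-gap bound with $\lambda_1=\pi^2$ is just a quantitative restatement of Lemma \ref{analyse}) --- but the way you neutralize the arbitrariness of $(\mu_N)$ is genuinely different. The paper keeps the measures $\mu_N$ and works on path space: it invokes the tightness of the laws of $(\pi^N_{r/N})_{0\le r\le t}$ on $\mathcal{D}_{\mathcal{M}}^t$ and the fact, taken from the \emph{proof} of Theorem \ref{LGNnonacc} in \cite{SSEPslowboundary} rather than its statement, that every limit point is concentrated on the set $\mathcal{A}_t$ of trajectories solving \eqref{heatNeumann}; the relaxation bound then shows the limiting event has probability zero. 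You instead condition on the initial configuration, reduce the claim to $\sup_{\e\in\Omega_N}h_N(\e)\to 0$, and, for a putative bad subsequence of configurations, manufacture an associated profile by compactness of $\mathcal{M}$ together with the a priori bound $\pi^N(\e)(I)\le|I|+1/N$ guaranteeing that the limit lies in $\mathcal{M}_0$; this lets you apply Theorem \ref{LGNnonacc} as a black box, since its hypothesis (association to a measurable profile) is supplied by the extraction. The trade-off is that the paper's route requires the stronger path-space input from \cite{SSEPslowboundary} but passes to the limit in one step, while yours is self-contained modulo the stated theorem and, via the triangle-inequality containment of events, also sidesteps the slightly delicate equality \eqref{proba2} between the $\overline{\lim}$ of the probabilities and the limiting measure of a fixed event. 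All the individual steps of your argument (the conditioning bound, the convergence of total masses under weak convergence, the choice of $t_0$ depending only on $a$ and $\|H\|_{L^2}$) check out.
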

To prove Proposition \ref{hydrossep} we use the following Lemma for which a proof can be found for instance in \cite{evans_partial_2010} or \cite{PDE}, Chapter 4.
\begin{lem}\label{analyse} Consider a measurable profile
$\rho_0:[0,1]\rightarrow [0,1]$ and let $\rho$ be the solution in
$\mathcal{H}^1(0,1)$ of \eqref{heatNeumann}. Then, for any $t\geq 0$,
$$\|\rho(t,.)-\int_0^1\rho_0(x)dx\|_{L^2}^2 \leq \|\rho(0,.)-\int_0^1\rho_0(x)dx\|_{L^2}^2e^{-2\lambda_1t}\leq 2e^{-2\lambda_1t},  $$
where $\lambda_1$ is the smallest non zero eigenvalue of the Laplacian associated to Neumann boundary conditions.
\end{lem}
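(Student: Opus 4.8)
The plan is to exploit the self-adjointness of the Neumann Laplacian on $[0,1]$ together with conservation of mass. First I would record that the average $\bar\rho := \int_0^1 \rho_0(x)\,dx$ is preserved by the flow: integrating \eqref{heatNeumann} in space and using the Neumann boundary conditions gives $\frac{d}{dt}\int_0^1 \rho(t,x)\,dx = \int_0^1 \partial_x^2\rho(t,x)\,dx = \partial_x\rho(t,1) - \partial_x\rho(t,0) = 0$, hence $\int_0^1 \rho(t,x)\,dx = \bar\rho$ for every $t\ge 0$. Consequently $w(t,\cdot) := \rho(t,\cdot) - \bar\rho$ is again a solution of the heat equation with Neumann boundary conditions, now with zero spatial average for all times and with initial datum $\rho_0 - \bar\rho$.

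Next I would run the standard energy estimate on $w$. Multiplying the equation by $w$, integrating over $[0,1]$ and integrating by parts (the boundary terms vanish by the Neumann conditions) yields $\frac{d}{dt}\|w(t,\cdot)\|_{L^2}^2 = -2\|\partial_x w(t,\cdot)\|_{L^2}^2$. Since $w(t,\cdot)$ has zero mean, the Poincaré inequality for the Neumann Laplacian on $[0,1]$ gives $\|\partial_x w(t,\cdot)\|_{L^2}^2 \ge \lambda_1\|w(t,\cdot)\|_{L^2}^2$ with $\lambda_1 = \pi^2$ the smallest nonzero Neumann eigenvalue. Hence $\frac{d}{dt}\|w(t,\cdot)\|_{L^2}^2 \le -2\lambda_1\|w(t,\cdot)\|_{L^2}^2$ and Grönwall's lemma produces the first inequality $\|\rho(t,\cdot)-\bar\rho\|_{L^2}^2 \le \|\rho_0-\bar\rho\|_{L^2}^2\, e^{-2\lambda_1 t}$. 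Equivalently, and this is the cleanest way to make the argument fully rigorous for a merely $\mathcal{H}^1$ solution, one expands $\rho_0$ in the orthonormal eigenbasis $\{1\}\cup\{\sqrt2\cos(k\pi x)\}_{k\ge1}$, writes $\rho(t,x) = \bar\rho + \sum_{k\ge1}\widehat{\rho_0}(k)\,e^{-k^2\pi^2 t}\sqrt2\cos(k\pi x)$, and applies Parseval: $\|\rho(t,\cdot)-\bar\rho\|_{L^2}^2 = \sum_{k\ge1}|\widehat{\rho_0}(k)|^2 e^{-2k^2\pi^2 t} \le e^{-2\lambda_1 t}\sum_{k\ge1}|\widehat{\rho_0}(k)|^2 = e^{-2\lambda_1 t}\|\rho_0-\bar\rho\|_{L^2}^2$.

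For the second inequality I would simply use that $0\le\rho_0\le 1$ and $0\le\bar\rho\le 1$, so $|\rho_0(x)-\bar\rho|\le 1$ for a.e.\ $x$, whence $\|\rho_0-\bar\rho\|_{L^2}^2 = \int_0^1(\rho_0(x)-\bar\rho)^2\,dx \le 1 \le 2$. Any crude bound of this type works; the constant $2$ is not optimal.

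The only genuine subtlety — hardly an obstacle, which is why the statement is quoted from a PDE textbook — is justifying the energy identity $\frac{d}{dt}\|w(t,\cdot)\|_{L^2}^2 = -2\|\partial_x w(t,\cdot)\|_{L^2}^2$ at the regularity of a weak $\mathcal{H}^1$ solution. This is handled either by the spectral representation above, where everything is an absolutely convergent series of exponentials, differentiable term by term for $t>0$ and continuous down to $t=0$, or by the usual mollification/approximation argument; I would invoke the cited references (\emph{e.g.} \cite{evans_partial_2010} or \cite{PDE}, Chapter 4) for this point rather than reproduce it.
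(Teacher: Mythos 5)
Your proof is correct. The paper does not actually prove this lemma---it simply refers the reader to \cite{evans_partial_2010} or \cite{PDE}, Chapter 4---so there is no in-paper argument to compare against; what you give (conservation of the mean, the energy identity plus the Poincar\'e inequality for zero-mean functions, or equivalently the expansion in the Neumann eigenbasis $\{1\}\cup\{\sqrt{2}\cos(k\pi x)\}_{k\ge 1}$ with Parseval) is exactly the standard textbook argument being invoked, and your handling of the regularity issue via the spectral representation is the right way to make it rigorous for a weak solution. The final bound $\|\rho_0-\bar\rho\|_{L^2}^2\le 1\le 2$ is also fine (indeed one even has $\int_0^1(\rho_0-\bar\rho)^2\,dx=\int_0^1\rho_0^2\,dx-\bar\rho^{\,2}\le\bar\rho(1-\bar\rho)\le 1/4$, so the paper's constant $2$ is far from sharp, as you note).
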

\noindent \textit{Proof of Proposition \ref{hydrossep}}.

Fix $a>0$, and $H\in \mathcal{C}^{0}([0,1])$. Consider $t_0>0$ such
that $2 e^{-\lambda_1 t_0}\|H\|_{L^2(0,1)}<\frac{a}{2}$ and fix
$t\geq t_0$. Let $(N_k)_{k\geq 1}$ be a sequence along which the
probability \eqref{Proba} converges. Let us show that the limit is
necessarily zero. It is enough to prove that there is a subsequence of
$(N_k)_{k\geq 1}$ such that the limit is zero.

Define $\mathcal{A}_{t} \subset\mathcal{D}_{\mathcal{M}_0}^t$ as the
set of trajectories $\{\rho(r,\cdot),~ 0\leq r\leq t\}$ whose density
$\rho$ is a solution of \eqref{heatNeumann} for some initial condition. The process
$(\pi_\frac{r}{N}^N)_{0\leq r \leq t}$ has generator $\mathcal{L}_N$
and by the proof of Theorem \ref{LGNnonacc} in
\cite{SSEPslowboundary}, the sequence of probability measures
$\Tilde{\mathbb{P}}_{\mu_N}$ on $\mathcal{D}_{\mathcal{M}}^t$ induced
by $(\pi^N_{\frac{r}{N}})_{0\leq r \leq t}$ when $\pi^N_0 \sim \mu_N$,
is tight and all its limit points $\mathbb{P}^*$ satisfy
$$\mathbb{P}^*\left(\mathcal{A}_{t} \right)=1. $$
Consider $(N_j)_{j\geq 1}$ a subsequence of $(N_k)_{k\geq 1}$ such that $\Tilde{\mathbb{P}}_{\mu_{N_j}}$ converges to a probability measure $\mathbb{P}^*$. Then,
\begin{equation}\label{proba2}
\begin{split}
        \underset{j \rightarrow \infty}{\overline{\lim}}~ \Tilde{\mathbb{P}}_{\mu_{N_j}}\left[ \Big|\langle\pi_{\frac{t}{N_j}}^{N_j},H\rangle - \widehat{m}(\pi_0^{N_j})\langle 1,H\rangle \Big|>a \right] &= \mathbb{P}^* \left[\left(\Big|\langle\rho(t),H\rangle - \int_0^1 \rho(0,x)dx \langle 1, H \rangle\Big|>a  \right) \bigcap \left(\rho\in  \mathcal{A}_{t}\right)\right].
\end{split}
\end{equation}
By Lemma \ref{analyse}, for any $\rho\in \mathcal{A}_{t} $,
\begin{align*}
    |\langle\rho_t,H\rangle -\int_0^1 \rho(0,x)dx \langle1,H\rangle |& \leq \|\rho(t,.)-\int_0^1 \rho(0,x)dx\|_{L^2(0,1)}\|H\|_{L^2(0,1)}\\
    &\leq 2 e^{-\lambda_1 t}\|H\|_{L^2(0,1)}.
\end{align*}
Therefore, for any $t\geq t_0$ and $\rho\in \mathcal{A}_{t} $
\begin{align*}
    |\langle\rho_t,H\rangle -\int_0^1 \rho(0,x)dx \langle1,H\rangle |& < \frac{a}{2},
\end{align*}
which implies that the limit in \eqref{proba2} is necessarily zero. \\\\
\textit{Proof of Theorem \ref{LGNprocess}.} 
Fix $H\in \mathcal{C}^{0}([0,1])$. The idea is to split the term $|\langle\pi_t^N,H\rangle -m(t)\langle1,H\rangle |$ into two parts involving the total mass process $\widehat{m}(\pi_t^N)$. One part is dealt with thanks to the hydrodynamic limit for the total mass given in Theorem \ref{LGNmass}. The other part relies on Proposition \ref{hydrossep}.

Fix a $t\in (0,T]$. We have
\begin{equation*}
    \begin{split}
        |\langle\pi_t^N,H\rangle -m(t)\langle1,H\rangle |  &\leq |\langle\pi_t^N,H\rangle -\widehat{m}(\pi_t^N) \langle1,H\rangle  |\\
        &+ |\widehat{m}(\pi_t^N) -m(t)| |\langle1,H\rangle |.
    \end{split}
\end{equation*}
By Theorem \ref{LGNmass}, the second term above converges in probability to zero under $\Tilde{\mathbb{P}}_{\mu_N}$. Let us prove that 
\begin{equation*}
    \underset{N\rightarrow \infty}{\lim}~ \Tilde{\mathbb{P}}_{\mu_N} \left[~ \Big| \langle\pi_t^N,H\rangle -\widehat{m}(\pi_t^N)\langle 1,H\rangle \Big|>\varepsilon \right] = 0
\end{equation*}
for all $\varepsilon>0$. Consider $t_0$ such that
 $$2 e^{-\lambda_1 t_0}\|H\|_{L^2(0,1)}<\frac{\varepsilon}{4},$$
where we recall that $\lambda_1$ was introduced in Lemma \ref{analyse}. Also, consider $N$ large enough so that $t-\frac{t_0}{N}\geq 0$. We have
\begin{equation*}
    \begin{split}
        \big|\langle\pi_t^N,H\rangle -\widehat{m}(\pi_t^N) \langle1,H\rangle  \big|& \leq \left|\langle\pi_t^N,H\rangle -\widehat{m}\left(\pi_{t-\frac{t_0}{N}}^N\right)\langle 1,H\rangle  \right|\\
        &+ \left|\widehat{m}\left(\pi_{t-\frac{t_0}{N}}^N\right)  - \widehat{m}(\pi_t^N) \right| \left|\langle1,H\rangle \right|.
    \end{split}
\end{equation*}
By Lemma \ref{tightness},
$$\underset{N\rightarrow \infty}{\lim}~ \Tilde{\mathbb{P}}_{\mu_N}\left[~ \Big|\widehat{m}\left(\pi_{t-\frac{t_0}{N}}^N\right)  - \widehat{m}(\pi_t^N)\Big| \big|\langle1,H\rangle \big|>\frac{\varepsilon}{2} \right]=0. $$
Now, we are left to show that
\begin{equation*}
   \underset{N\rightarrow \infty}{\lim}~ \Tilde{\mathbb{P}}_{\mu_N}\left[~ \left|\langle\pi_t^N,H\rangle -\widehat{m}\left(\pi_{t-\frac{t_0}{N}}^N\right) \langle 1,H\rangle  \right|>\frac{\varepsilon}{2} \right]=0.
\end{equation*}
Consider the process $\left(\Tilde{\pi}_s^N:=\pi^N_{t-\frac{t_0}{N}+ \frac{s}{N} }\right)_{s\geq 0}$ so that $\Tilde{\pi}_{0}^N = \pi_{t-\frac{t_0}{N}}^N$ and $\Tilde{\pi}_{t_0}^N = \pi_t^N$. As the process is Markovian, 
\begin{align*}
    \Tilde{\mathbb{P}}_{\mu_N}\left[~ \left|\langle\pi_t^N,H\rangle -\widehat{m}\left(\pi_{t-\frac{t_0}{N}}^N\right)  \langle1,H\rangle \right|>\frac{\varepsilon}{2} \right] = \Tilde{\mathbb{P}}_{S_{t-\frac{t_0}{N}}(\mu_N)}\left[~  \Big|\langle\Tilde{\pi}_{t_0}^N,H\rangle -\widehat{m}(\Tilde{\pi}_0^N) \langle 1,H\rangle  \Big|>\frac{\varepsilon}{2}\right],
\end{align*}
where $S_{t-\frac{t_0}{N}}(\mu_N)$ is the push-forward of the measure
$\mu_N$ under the dynamics. By Proposition \ref{hydrossep} and the
choice of $t_0$, this goes to zero as $N$ goes to infinity. \qed

\subsection*{Static law of large numbers for the accelerated process}

In order to prove Theorem \ref{LGNmeseq}, we start by proving the
static law of large numbers for the total mass of the process (see
Proposition \ref{LGNequ}). This result follows from the fact that any
solution to the O.D.E \eqref{EDPmasse} of the total mass converges to
$\gamma$. Then, we use a similar argument as in the proof of Theorem
\ref{LGNprocess} which consists in moving by a factor $t/N$ back in
time, to recover the static law of large numbers for the accelerated
process of empirical measures.

Recall that $\mu_{ss}^N$ denotes the unique stationary measure on $\Omega_N$ relatively to the dynamics induced by the generator $\mathcal{L}_N$. 

\begin{prop}\label{LGNequ}
    The sequence of invariant measures $(\mu_{ss}^N)_{N\geq 1}$ satisfies:
    \begin{equation*}
        \underset{N \rightarrow \infty}{\lim}~ E_{\mu_{ss}^N}\left[~ \Big| \widehat{m}(\pi^N)- \gamma\Big| ~\right] =  0,
    \end{equation*} 
    where $E_{\mu_{ss}^N}$ is the expectation under $\mu_{ss}^N$ on $\Omega_N$.
\end{prop}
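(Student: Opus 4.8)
The plan is to transfer the dynamical law of large numbers for the total mass (Proposition \ref{LGNmass}) to the stationary regime, using the fact that the solution of the ODE \eqref{EDPmasse} converges to $\gamma$ regardless of its initial condition. The key point is stationarity: if we start the accelerated process from $\mu_{ss}^N$, then the law of $\widehat m(\pi_t^N)$ is the same for every $t$; so it suffices to show that this common law concentrates on $\gamma$ by looking at the process at a \emph{large} time.

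First I would fix $\varepsilon>0$ and pick $T_\varepsilon>0$ large enough that $|m(T_\varepsilon)-\gamma|<\varepsilon/2$ uniformly over all initial masses $m_0\in[0,1]$; this is possible because, by \eqref{f01}, $|m(t)-\gamma|=|m_0-\gamma|e^{-2t}\le e^{-2t}$. Next, using that $\widehat m(\pi^N)$ under $\mu_{ss}^N$ is bounded (in $[0,1]$) and that $\mu_{ss}^N$ is invariant for the accelerated dynamics, write
\begin{equation*}
\E_{\mu_{ss}^N}\big[\,|\widehat m(\pi^N)-\gamma|\,\big]
= \Tilde\E_{\mu_{ss}^N}\big[\,|\widehat m(\pi_{T_\varepsilon}^N)-\gamma|\,\big]
\le \tfrac{\varepsilon}{2} + \Tilde\E_{\mu_{ss}^N}\big[\,|\widehat m(\pi_{T_\varepsilon}^N)-m(T_\varepsilon)|\,\big],
\end{equation*}
where here $m(\cdot)$ is the solution of \eqref{EDPmasse} started from $m(0)=\widehat m(\pi_0^N)$ — note that this is itself random (it depends on the initial configuration), which is a point to handle carefully. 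To deal with this, I would instead condition on the initial configuration: by the Markov property, $\Tilde\E_{\mu_{ss}^N}[\,\cdot\,] = \E_{\mu_{ss}^N}\big[\Tilde\E_{\delta_{\zeta_0}}[\,\cdot\,]\big]$, and for each fixed initial configuration $\e^N$ with mass $m_0^N:=\widehat m(\pi^N(\e^N))$, Proposition \ref{LGNmass} (applied to the deterministic sequence $\mu_N=\delta_{\e^N}$, which is trivially associated to the mass $m_0^N$) gives that $\widehat m(\pi_{T_\varepsilon}^N)$ converges in $\Tilde{\mathbb P}_{\delta_{\e^N}}$-probability to $m_{m_0^N}(T_\varepsilon)$, where $m_{m_0}(\cdot)$ denotes the solution of \eqref{EDPmasse} with initial datum $m_0$. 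Since everything is bounded by $1$, dominated convergence turns this into convergence of expectations, uniformly in the initial datum because the convergence rate in Proposition \ref{LGNmass} can be made uniform over $m_0\in[0,1]$ (the replacement lemma and the martingale bound are uniform in $\mu_N$).

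Combining, for $N$ large, $\E_{\mu_{ss}^N}[\,|\widehat m(\pi^N)-\gamma|\,]\le \varepsilon/2 + \varepsilon/2 = \varepsilon$, and since $\varepsilon>0$ is arbitrary the claim follows. The main obstacle is the bookkeeping around the randomness of the initial mass: one must be sure that the limit in Proposition \ref{LGNmass} holds with a speed that does not degenerate as $m_0$ ranges over $[0,1]$. This is where I would invoke that the quadratic-variation estimate \eqref{Varquadra} and the superexponential bound in Lemma \ref{RL} hold for \emph{arbitrary} sequences of initial measures $\mu_N$ with constants independent of $\mu_N$; hence the convergence $\Tilde{\mathbb P}_{\mu_N}[\,|\widehat m(\pi_{T_\varepsilon}^N)-m(T_\varepsilon)|>\varepsilon/4\,]\to 0$ is uniform in the choice of $\mu_N$, in particular over all $\delta_{\e^N}$, which is exactly what is needed. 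Alternatively one can avoid the issue entirely by integrating the bound over $\mu_{ss}^N$ directly: set $\nu_N := \mu_{ss}^N$ in Proposition \ref{LGNmass}'s proof and note the inequality $\Tilde{\mathbb E}_{\mu_{ss}^N}\big[\,\sup_{t\le T}|M_t^N|\,\big]\to 0$ together with the replacement lemma give $\widehat m(\pi_{T_\varepsilon}^N) - \int_0^{T_\varepsilon} 2(\gamma-\widehat m(\pi_s^N))\,ds - \widehat m(\pi_0^N) \to 0$ in $\Tilde{\mathbb P}_{\mu_{ss}^N}$-probability, and then use stationarity to show $\Tilde{\mathbb E}_{\mu_{ss}^N}[\widehat m(\pi^N)]\to\gamma$ and a variance bound to upgrade to $L^1$ convergence to the constant $\gamma$.
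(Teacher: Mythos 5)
Your proposal is correct and follows essentially the same route as the paper: stationarity reduces the claim to the law of the total mass at a large fixed time $T$, at which the ODE \eqref{EDPmasse} has contracted to within $\varepsilon$ of $\gamma$ uniformly over the (random) initial mass, the only real issue being that $\mu_{ss}^N$ is not associated to a fixed mass. The paper resolves exactly this point via your second alternative: it extracts weak limit points of $\Tilde{\mathbb{P}}_{\mu_{ss}^{N_k}}$ (Lemma \ref{tightness} and \eqref{limitpoint} hold for arbitrary initial laws) and uses the explicit solution \eqref{f01} to get $|\widehat{m}(\pi_T)-\gamma|\le e^{-2T}$ almost surely under any limit point, which makes the final ``variance bound'' you invoke unnecessary.
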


\begin{proof} As $\Big(\widehat{m}(\pi^N)- \gamma\Big)_{N\geq 1}$ is $\mu_{ss}^N$-almost surely bounded, it is enough to prove the convergence in probability to zero. Fix $\varepsilon>0$ and choose $T=T(\varepsilon)$ such that for any $m_0\in [0,1]$, $\big|m_0-\gamma\big|e^{-2T} \leq \varepsilon$. 

Consider $(N_k)_{k\geq 1}$ a sequence such that
$$\lims~  \mu_{ss}^N\Big(~ \Big|\widehat{m}(\pi^N)- \gamma\Big|>\varepsilon ~ \Big)= \underset{k \rightarrow \infty}{\lim}~ \mu_{ss}^{N_k}\Big(~ \Big|\widehat{m}(\pi^{N_k})- \gamma\Big|>\varepsilon~  \Big).$$
By stationarity of $\mu_{ss}^N$,
$$\mu_{ss}^{N_k}\Big(~ \Big|\widehat{m}(\pi^{N_k})- \gamma\Big|>\varepsilon~  \Big) = \Tilde{\mathbb{P}}_{\mu_{ss}^{N_k}}\Big(~ \Big|\widehat{m}(\pi_T^{N_k})- \gamma\Big|>\varepsilon~  \Big). $$
By Lemma \ref{tightness}, one can extract from $\Tilde{\mathbb{P}}_{\mu_{ss}^{N_k}}$ a converging subsequence $\Tilde{\mathbb{P}}_{\mu_{ss}^{N_j}}$ with limit $\mathbb{P}^*$, so that
\begin{align*}
    \underset{k \rightarrow \infty}{\lim}~ \mu_{ss}^{N_k}\Big(~ \Big|\widehat{m}(\pi^{N_k})- \gamma\Big|>\varepsilon~  \Big) &= \underset{j \rightarrow \infty}{\lim}~\Tilde{\mathbb{P}}_{\mu_{ss}^{N_j}}\Big(~ \Big|\widehat{m}(\pi_T^{N_j})- \gamma\Big|>\varepsilon~  \Big)\\
    &= \mathbb{P}^*\Big(~ \Big|\widehat{m}(\pi_T)- \gamma\Big|>\varepsilon~  \Big)\\
    &= \mathbb{P}^*\Big(~ \Big|\widehat{m}(\pi_0)- \gamma\Big|e^{-2T}>\varepsilon~  \Big) = 0,
\end{align*}
where the last line comes from \eqref{Portmant} and the choice of $T$.
\end{proof}

\begin{rem} \label{R1} One can deduce from the proof of Proposition
\ref{LGNequ} that there is a sequence
$(\varepsilon_N)_{N\geq 1} \downarrow 0$ such that
$\mu_{ss}^N\left(B_{\varepsilon_N} \right)$ converges to $1$, where
$B_{\varepsilon_N}$ is the set of measures with total mass less than
$\varepsilon_N$ away from $\gamma$:
$$B_{\varepsilon_N} = \left\{\pi \in \mathcal{M},~ |\widehat{m}(\pi)-\gamma|<\varepsilon_N \right\}.  $$
\end{rem}

\textit{Proof of Theorem \ref{LGNmeseq}.} Consider $(\pi_t^N)_{t\geq 0}$ the accelerated process of empirical measures, that is, with generator $\mathfrak{L}_N$, and such that $\pi_0^N \sim \mu_{ss}^N$. Fix $G\in \mathcal{C}^0([0,1])$ and $T>0$ which will be taken sufficiently large later on.  As the generator of the accelerated process is proportional to the one for the non accelerated one, $\mu_{ss}^N$ is invariant for that process, therefore,
\begin{equation}
    \begin{split}
        \mu_{ss}^N\Big[~ \Big|\langle \pi^N,G\rangle - \gamma\langle 1,G\rangle \Big|~ \Big] &= \E_{\mu_{ss}^N}\Big[~ \Big|\langle \pi_T^N,G\rangle - \gamma\langle 1,G\rangle \Big|~ \Big]\\
        & \leq \E_{\mu_{ss}^N}\Big[~ \Big|\langle \pi_T^N,G\rangle - \widehat{m}(\pi_{T-\frac{t_0}{N}}^N)\langle 1,G\rangle \Big|~ \Big]~~~~:= a_{N,T,t_0}\\
        & + \E_{\mu_{ss}^N}\Big[~ \Big|\widehat{m}(\pi_{T-\frac{t_0}{N}}^N)-\widehat{m}(\pi_{T}^N) \Big|\langle 1,G\rangle~ \Big]~~~~~~:= b_{N,T,t_0}\\
        & + \E_{\mu_{ss}^N}\Big[~ \Big|\widehat{m}(\pi_{T}^N) -\gamma\Big|\langle 1,G\rangle~ \Big]~~~~~~~~~~~:= c_{N,T}
    \end{split}
\end{equation}
where $N$ is large enough so that $T-\frac{t_0}{N} >0$. By Proposition
\ref{LGNequ}, $c_{N,T} \rightarrow 0$, when $N \rightarrow \infty$ and
then $T\rightarrow \infty$.

To control $b_{N,T,t_0}$, using the proof of Lemma \ref{tightness} we get:
\begin{equation*}
   \lims b_{N,T,t_0} \leq  \lims \E_{\mu_{ss}^N} \Big[~ \underset{|s-t|\leq \frac{t_0}{N}}{\sup}~\Big|M_t^N- M_s^N  \Big|~   \Big] + \lims \E_{\mu_{ss}^N} \Big[~ \underset{|s-t|\leq \frac{t_0}{N}}{\sup}~\Big|N \int_s^t \mathcal{L}_{N,b}\widehat{m}(\pi_r^N)dr \Big|~   \Big] =0.
\end{equation*}
Finally, to control $a_{N,T,t_0}$, introduce the process $\left(\Tilde{\pi}_s^N:=\pi^N_{t-\frac{t_0}{N}+ \frac{s}{N} }\right)_{s\geq 0}$ as in the proof of Theorem \ref{LGNprocess}. By Markov's property,
\begin{equation*}
    a_{N,T,t_0} = \E_{\mu_{ss}^N} \Big[~\Big|\langle \Tilde{\pi}_{t_0}^N ,G\rangle - \langle \Tilde{\pi}_{0}^N ,1\rangle \langle 1,G\rangle\Big| ~ \Big].
\end{equation*}
By Lemma \ref{tightness}, $(\Tilde{\mathbb{P}}_{\mu_{ss}^N})_{N\geq 1}$ is tight so we can extract from it a sequence $(\Tilde{\mathbb{P}}_{\mu_{ss}^{N_k}})_{k\geq 1}$ converging to a probability measure $\mathbb{Q}^*$. By Theorem \ref{LGNnonacc}, 
\begin{equation*}
    \mathbb{Q}^*\Big(\pi \in \mathcal{A}_{t_0} \Big) = 1,
\end{equation*}
where 
$$\mathcal{A}_{t_0} = \Big\{\big(\rho(t,x)dx \big)_{t\in [0,t_0]} ,~ \rho~ \text{is a weak solution of \eqref{heatNeumann}}\Big\}. $$
Therefore,
\begin{equation*}
    \begin{split}
        \lims \E_{\mu_{ss}^{N_k}} \Big[~\Big|\langle \Tilde{\pi}_{t_0}^{N_k} ,G\rangle - \langle \Tilde{\pi}_{0}^{N_k} ,1\rangle \langle 1,G\rangle\Big| ~ \Big] &= \E_{\mathbb{Q}^*} \Big[ \Big|\langle \pi_{t_0},G\rangle - \langle \pi_{0},1\rangle \langle 1,G\rangle  \Big| \mathds{1}_{\pi\in \mathcal{A}_{t_0}}\Big]\\
        & \leq \E_{\mathbb{Q}^*} \Big[ \|\rho(t_0,.) - \int_0^1 \rho(0,x)dx \|_{L^2(0,1)}\|G\|_{L^2(0,1)} \mathds{1}_{\pi\in \mathcal{A}_{t_0}}\Big]\\
        & \leq 2e^{-\lambda_1t_0}\|G\|_{L^2(0,1)} \underset{t_0\rightarrow \infty}{\longrightarrow} 0.
    \end{split}
\end{equation*}
Hence $\underset{t_0 \rightarrow \infty}{\overline{\lim}} \lims a_{N,T,t_0}=0$ and the result follows.

\section{Large deviations functional}
In this section, we list some properties on the large deviations functional $I_T$, introduced in Definition \ref{defGD}, that will be used later on. We prove the $I_T$ - density, used for the proof of the lower bound of Theorem \ref{DynamicGD}, and we prove Lemma \ref{Identific}.

\subsection{Some properties on $I_T$}
It is immediate to see that $I_T$ is lower semi-continuous, as the supremum of linear functions and therefore continuous functions. Furthermore, we show that it is infinite on non continuous trajectories, that it has compact level sets and that its variational formulation is solvable on a particular class of elements of $\mathcal{D}_{[0,1]}^T$.
\begin{prop}\label{Continu}
If $a\in \mathcal{D}_{[0,1]}^T$ satisfies $I_T(a)<\infty$, then $a$ is continuous.
\end{prop}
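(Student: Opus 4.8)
The plan is to show that if $a$ is a trajectory with a genuine jump, then one can choose a test function $G$ making $J_{T,G}(a)$ arbitrarily large, contradicting $I_T(a)<\infty$. First I would recall the structure of $J_{T,G}$ from \eqref{fonctA}: after an integration by parts in the first two boundary terms one may equivalently write, for $G\in\mathcal{C}^1([0,T])$,
\begin{equation*}
J_{T,G}(a) \;=\; \int_0^T G_s\, da_s \;-\; \int_0^T A_G(a)(s)\, ds ,
\end{equation*}
where the first integral is interpreted as a Riemann--Stieltjes integral against the (right-continuous, bounded) path $a$. The key point is that the nonlinear term $A_G(a)(s)=2\gamma(1-a_s)(e^{G_s}-1)+2(1-\gamma)a_s(e^{-G_s}-1)$ is uniformly bounded whenever $G$ is supported on a short time interval and bounded there: since $0\le a_s\le 1$, on any interval of length $\eta$ we have $\big|\int A_G(a)\,ds\big|\le C(\|G\|_\infty)\,\eta$ with $C$ depending only on $\|G\|_\infty$ (and in fact $C(\|G\|_\infty)\le 2(e^{\|G\|_\infty}-1+1-e^{-\|G\|_\infty})$, say).

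Next I would localize at a jump. Suppose $a$ is not continuous; then there is a time $t_0\in(0,T]$ and $\delta>0$ with $|a(t_0)-a(t_0^-)|\ge 3\delta$ (if the jump is at $t_0=T$, or at $t_0=0$ using $a(0^+)$, the argument is the same up to trivial modifications, using that the first two terms of \eqref{fonctA} already isolate the endpoint value). Fix a large constant $M>0$ and choose $G$ to be a smooth function that equals $M\cdot\mathrm{sgn}(a(t_0)-a(t_0^-))$ on an interval $[t_0-\eta,t_0+\eta]$ (or $[t_0-\eta,t_0]$ if $t_0=T$) and vanishes outside $[t_0-2\eta,t_0+2\eta]$, with $\|G\|_\infty=M$. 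Then the Stieltjes integral $\int_0^T G_s\,da_s$ restricted to a neighbourhood of $t_0$ captures at least $M\cdot|a(t_0)-a(t_0^-)|\ge 3M\delta$ once $\eta$ is small enough that the variation of $a$ on $[t_0-2\eta,t_0+2\eta]\setminus\{t_0\}$ is smaller than $\delta$ (possible since a càdlàg path has at most countably many jumps and its total variation-type oscillation off the jump is small on a short interval — more carefully, one uses that $a$ has left and right limits at $t_0$, so on a small punctured neighbourhood $a$ stays within $\delta$ of $a(t_0^-)$ on the left and within $\delta$ of $a(t_0)$ on the right). Outside $[t_0-2\eta,t_0+2\eta]$ the contribution of $\int G\,da$ is zero. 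Meanwhile $\big|\int_0^T A_G(a)\,ds\big|\le 4\eta\, C(M)$, which is negligible compared to $3M\delta$ once $\eta$ is chosen small (depending on $M$). Hence $J_{T,G}(a)\ge 3M\delta - 4\eta C(M)\ge 2M\delta$, and letting $M\to\infty$ (shrinking $\eta$ accordingly) forces $I_T(a)=\sup_G J_{T,G}(a)=+\infty$.

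The main obstacle is the careful handling of the Riemann--Stieltjes integral $\int_0^T G_s\,da_s$ and the approximation of the indicator-type test function by a $\mathcal{C}^1$ function: one must verify that smoothing $G$ near the endpoints of its plateau changes $\int G\,da$ by at most $O(M\delta)$ (controlled by the oscillation of $a$ off the jump on the transition zones) and changes $\int A_G(a)\,ds$ only by a further $O(\eta C(M))$ term. A clean way to organize this is to pick the transition zones to avoid the (countably many) jump times of $a$ other than $t_0$ and to lie in regions where $a$ oscillates by less than $\delta$; then all the smoothing errors are genuinely $O(M\delta)$ and the above estimate goes through. I would also note that the same computation, specialized to continuous $a$, is exactly what is needed later to solve the variational problem on nice trajectories, so the estimates here are not wasted.
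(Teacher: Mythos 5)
Your argument is correct in substance, but it is genuinely different from the paper's proof, and the difference matters for what comes next. The paper does not argue by contradiction at a single jump: it takes the one-parameter family $H_{s,t}(r)=\log(1/(t-s))\mathds{1}_{[s,t]}(r)$ (smoothed), whose height is calibrated so that $e^{H_{s,t}}=1/(t-s)$ on an interval of length $t-s$, making the nonlinear term $\int A_{H}(a)\,dr$ bounded uniformly in $s,t$. This yields the quantitative modulus of continuity \eqref{borne}, $|a(t)-a(s)|\le [\log(1/(t-s))]^{-1}[I_T(a)+C_1(t-s)+C_2]$, which gives continuity \emph{and} uniform equicontinuity on level sets; the latter is exactly what Lemma \ref{Levelset} and Remark \ref{uniformconv} then harvest to get compactness. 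Your bump of height $M$ with support $\eta(M)$ chosen so that $\eta\, e^{M}$ is negligible proves the same qualitative statement (a jump of size $3\delta$ yields $J_{T,G}(a)\gtrsim M\delta$, hence $I_T(a)=\infty$), but it produces no modulus of continuity, so the compactness of level sets would require a separate, quantified version of your estimate --- which, once you optimize $M$ against $\eta$, essentially reproduces the paper's logarithmic choice.

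Two points in your write-up need tightening. First, $\int_0^T G_s\,da_s$ is not defined for a general c\`adl\`ag path, which need not have bounded variation; you must work directly with $a_TG_T-a_0G_0-\int_0^T\partial_sG_s\,a_s\,ds$ from \eqref{fonctA}. This causes no real difficulty: with $G$ equal to $\pm M$ on $[t_0-\eta,t_0+\eta]$ and ramping to $0$ on the two flanking intervals, $-\int\partial_sG_s\,a_s\,ds$ is $M$ times the difference of two weighted averages of $a$, taken respectively just after and just before $t_0$, and the existence of one-sided limits (not any variation bound, and no need to ``avoid other jump times'') controls these averages to within $\delta$ of $a(t_0)$ and $a(t_0^-)$. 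Second, drop the appeal to ``total variation-type oscillation'' and to the countability of jumps; only the oscillation estimate coming from the one-sided limits at $t_0$ is available, and it suffices.
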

\begin{proof}
    Consider $0\leq s<t \leq T$ such that $t-s\leq 1$ and let $H_{s,t}:[0,T]\rightarrow \R$ be given by $H_{s,t}(r)= \log(1/(t-s))\mathds{1}_{[s,t]}(r)$. Also, consider a smooth approximation $H_{s,t}^p:[0,T]\rightarrow \R$ of $H_{s,t}$ in the sense that for any element $f$ of $\mathcal{C}^{\infty}([0,T])$,
$$\int_0^T f(r)H_{s,t}^p(r) dr \underset{p\rightarrow \infty}{\longrightarrow}  \int_0^T f(r)H_{s,t}(r) dr,~ ~\text{and}~ ~  \int_0^T f(r) \partial_r H_{s,t}^p(r) dr \underset{p\rightarrow \infty}{\longrightarrow}  \int_0^T f(r) \partial_r H_{s,t}(r) dr $$
where $\partial_r H_{s,t}^p$ resp. $\partial_r H_{s,t}$ refers to the weak derivative of $H_{s,t}^p $ resp. $H_{s,t}$. Then, recalling the definition of $J_{T,H}$ and using that the weak derivative of $H_{s,t}$ is given by:
$$\int_0^T f(r)\partial_r H_{s,t}(r) dr = f(T)H_{s,t}(T)-f(0)H_{s,t}(0) - \log(1/(t-s))(f(t)-f(s)), $$
we have
\begin{equation}
    \begin{split}
        \underset{p\rightarrow \infty}{\overline{\lim}} J_{T,H_{s,t}^p}(\pi) &= \log(1/(t-s))\big(a(t)-a(s)\big)\\
        &- \int_s^t \left[2\gamma\big(1-a(r)\big)\left(\frac{1}{t-s}-1 \right) + 2(1-\gamma)a(r)(t-s-1) \right]dr.
    \end{split}
\end{equation}
Now, as $\lims J_{T,H_{s,t}^p}(a) \leq I_T(a)<\infty $ and $a$ is bounded, there are constants $C_1,C_2>0$ such that
\begin{equation}\label{borne}
    \left\vert a(t) -a(s)\right\vert \leq \big[\log(1/(t-s))\big]^{-1} \left[I_T(a) + (t-s)C_1 + C_2 \right]
\end{equation}
and the right hand side of this inequality goes to zero as $s\rightarrow t$, hence the result.
\end{proof}
For $q\geq 0$, the $q$-level set of $I$ is defined as
 $$ E_q=\left\{a\in \mathcal{D}_{[0,1]}^T~ I_T(a)\leq q \right\}.$$ 
 A corollary of the proof of Proposition \ref{Continu} is the following: 
\begin{lem} \label{Levelset} The level sets of $I_T$ are compact in $\mathcal{D}_{[0,1]}^T$.
\end{lem}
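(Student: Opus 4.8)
Since $I_T$ is lower semicontinuous on $\mathcal{D}_{[0,1]}^T$ (as noted above, being a supremum of continuous functionals), each level set $E_q = \{a \in \mathcal{D}_{[0,1]}^T : I_T(a) \le q\}$ is closed. It therefore remains only to show that $E_q$ is relatively compact. The plan is to extract from the proof of Proposition \ref{Continu} a \emph{uniform} modulus of continuity on $E_q$ and then invoke the Arzelà–Ascoli theorem.

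First I would observe that the constants $C_1, C_2$ appearing in the estimate \eqref{borne} depend only on the a priori bound $0 \le a \le 1$ and not on the trajectory $a$ itself. Consequently, for every $a \in E_q$ and every $0 \le s < t \le T$ with $t - s \le 1$,
\begin{equation*}
    |a(t) - a(s)| \;\le\; \big[\log(1/(t-s))\big]^{-1}\,\big(q + C_1 + C_2\big) \;=:\; \omega(t-s),
\end{equation*}
and $\omega(\delta) \to 0$ as $\delta \to 0$. Thus $E_q$ is a uniformly equicontinuous family of functions on $[0,T]$, and since every trajectory takes values in the compact interval $[0,1]$ it is also uniformly bounded.

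By the Arzelà–Ascoli theorem, $E_q$ is relatively compact in $\mathcal{C}([0,T])$ endowed with the uniform topology: every sequence in $E_q$ admits a subsequence converging uniformly to some continuous limit. Since uniform convergence implies convergence in the Skorohod topology, $E_q$ is relatively compact in $\mathcal{D}_{[0,1]}^T$ as well; moreover, because the modulus $\omega$ is uniform over $E_q$, any Skorohod limit point of a sequence in $E_q$ is a continuous function and the convergence is in fact uniform, so the closure of $E_q$ consists of continuous trajectories. Combining relative compactness with the closedness of $E_q$ established from lower semicontinuity of $I_T$ yields that $E_q$ is compact.

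The only point that requires a little care is the bookkeeping between the Skorohod and the uniform topologies — specifically, checking that a sequence in $E_q$ that converges in the Skorohod sense has a continuous limit and converges uniformly to it — but this is an immediate consequence of the uniform equicontinuity bound above. Everything else is a routine application of Arzelà–Ascoli together with the already-noted lower semicontinuity of $I_T$.
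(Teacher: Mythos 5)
Your proof is correct and follows essentially the same route as the paper's: closedness of $E_q$ from lower semicontinuity of $I_T$, plus relative compactness obtained by turning the estimate \eqref{borne} into a modulus of continuity that is uniform over $E_q$. The paper states the uniform equicontinuity claim and calls it an immediate consequence of \eqref{borne}; you merely spell out the Arzelà--Ascoli step and the Skorohod-versus-uniform bookkeeping that the paper leaves implicit.
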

\begin{proof}
    Fix $q\geq 0$. By lower semi continuity of $I_T$, $E_q$ is closed so we just need to show that it is relatively compact. For that, we show that
$$\underset{\delta \rightarrow 0}{\lim}~ \underset{u\in E_q}{\sup}~\underset{\underset{|t-s|<\delta}{0\leq s,t\leq T}}{\sup}|a(t)-a(s)| = 0 $$
but this is an immediate consequence of \eqref{borne}.
\end{proof}

\begin{rem} \label{uniformconv} By Proposition \ref{Continu}, any level set of $I_T$ is a subset of $\mathcal{C}([0,T])$ and the proof of Lemma \ref{Levelset} shows that a level set is compact in $\mathcal{C}([0,T])$ for the topology of uniform convergence.
\end{rem}

\begin{prop}\label{Inversion}
    Fix $a\in \mathcal{C}^2([0,T])$ and consider $H\in \mathcal{C}^1([0,T])$ a solution of the following partial differential equation,
    \begin{equation}\label{EDP}
        \partial_t a(t) = 2\gamma\big(1-a(t)\big)e^{H(t)} - 2(1-\gamma)a(t)e^{-H(t)}.
    \end{equation}
    Then, $H$ solves the following variational problem
    $$I_T(a) = \underset{G\in \mathcal{C}^1([0,T])}{\sup}~ J_{T,G}(a) = J_{T,H}(a).  $$
\end{prop}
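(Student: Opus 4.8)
The plan is to exploit the fact that $J_{T,G}(a)$ is concave in $G$, so that a critical point of the functional $G \mapsto J_{T,G}(a)$ is automatically its maximizer, and then to show that the PDE \eqref{EDP} is precisely the Euler--Lagrange equation characterizing such a critical point. First I would recall from \eqref{fonctA} that
\[
J_{T,G}(a) \;=\; a_T G_T - a_0 G_0 - \int_0^T \partial_s G_s\, a_s\, ds - \int_0^T \Big[ 2\gamma(1-a_s)(e^{G_s}-1) + 2(1-\gamma)a_s(e^{-G_s}-1)\Big]\, ds .
\]
Given $G \in \mathcal{C}^1([0,T])$ and an arbitrary perturbation direction $\varphi \in \mathcal{C}^1([0,T])$, I would compute the Gateaux derivative $\frac{d}{d\varepsilon}\big|_{\varepsilon=0} J_{T,G+\varepsilon\varphi}(a)$. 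The boundary and time-derivative terms are linear in $G$, and integrating $\int_0^T \partial_s\varphi_s\, a_s\, ds$ by parts (here $a \in \mathcal{C}^2$, in particular $\mathcal{C}^1$, is used) turns the first-order variation into $\int_0^T \varphi_s\big[\partial_s a_s - 2\gamma(1-a_s)e^{G_s} + 2(1-\gamma)a_s e^{-G_s}\big]\, ds$, with the boundary contributions cancelling. Hence $G=H$ is a critical point precisely when the bracket vanishes for all $s$, i.e. when \eqref{EDP} holds.

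Next I would record concavity: the map $x \mapsto 2\gamma(1-a_s)(e^x-1) + 2(1-\gamma)a_s(e^{-x}-1)$ is convex in $x$ (both $e^x$ and $e^{-x}$ are convex and the coefficients $2\gamma(1-a_s)$, $2(1-\gamma)a_s$ are nonnegative since $a_s\in[0,1]$ and $\gamma\in(0,1)$), so $-\int_0^T(\cdots)ds$ is concave in $G$; the remaining terms of $J_{T,G}(a)$ are affine in $G$. Therefore $G\mapsto J_{T,G}(a)$ is concave on $\mathcal{C}^1([0,T])$, and a critical point is a global maximizer. Combining this with the previous paragraph, $J_{T,H}(a) = \sup_{G\in\mathcal{C}^1([0,T])} J_{T,G}(a)$, which by Definition \ref{defGD} equals $I_T(a)$ — provided $a \in \mathcal{D}_{[0,1]}^T$, which holds since $a\in\mathcal{C}^2([0,T])$ takes values in $[0,1]$ by hypothesis.

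The one genuine subtlety — the main obstacle — is that $I_T(a)$ in Definition \ref{defGD} is the supremum over \emph{all} of $\mathcal{C}^1([0,T])$, not merely over a neighborhood of $H$, so I must be careful that the concavity argument is global and that no issue arises from the supremum being attained versus approached. Concavity handles exactly this: for a concave functional on the vector space $\mathcal{C}^1([0,T])$, vanishing of the first variation in every direction $\varphi$ forces the global maximum, because $J_{T,G}(a) \le J_{T,H}(a) + \langle DJ_{T,H}(a),\, G-H\rangle = J_{T,H}(a)$ for every $G$. I would also note in passing that the argument requires $a_s \notin \{0,1\}$ nowhere-densely only for the strict-convexity refinements; for the bare identity $I_T(a)=J_{T,H}(a)$, nonnegativity of the coefficients suffices, so no extra hypothesis beyond $a\in\mathcal{C}^2([0,T])$ with values in $[0,1]$ and the existence of the $\mathcal{C}^1$ solution $H$ is needed. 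Finally I would remark that such an $H$ does exist and is unique when $a$ stays in $(0,1)$: solving \eqref{EDP} for $e^{H(t)}$ gives a quadratic $2\gamma(1-a_t)y^2 - (\partial_t a_t) y - 2(1-\gamma)a_t = 0$ in $y=e^{H(t)}$ with a unique positive root, depending continuously (indeed $\mathcal{C}^1$) on $t$.
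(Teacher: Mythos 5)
Your proof is correct and is essentially the paper's argument: the paper also integrates by parts to write $J_{T,G}(a)=\int_0^T G_s\,\partial_s a_s\,ds-\int_0^T A_G(a)(s)\,ds$, substitutes the ODE \eqref{EDP} for $\partial_s a_s$, and shows that the integrand of $J_{T,G}(a)-J_{T,H}(a)$ is, for each $s$, a concave function of $G_s$ whose derivative vanishes at $G_s=H_s$, where it equals zero. Your phrasing of the same concavity-plus-criticality mechanism via the Gateaux derivative of the functional, rather than pointwise in $s$, is a purely presentational difference.
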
 
\begin{proof}
    For $G\in \mathcal{C}^1([0,T])$, performing an integration by part we get
\begin{equation*}
        J_{T,G}(a) = \int_0^TG_s\partial_sa_sds ~ - \int_0^T A_G(a)(s)ds,
\end{equation*}
where recall that $A_G(a)$ was defined in \eqref{AG}. Now, writing $J_{T,G}(a) = J_{T,G}(a)-J_{T,H}(a) + J_{T,H}(a)$ we check that $J_{T,G}(a)-J_{T,H}(a)\leq 0 $. Replacing $\partial_s a_s$ thanks to \eqref{EDP} we have,
\begin{equation}
    \begin{split}
        J_{T,G}(a)-J_{T,H}(a) &= \int_0^T(G_s-H_s)\partial_s a_s ds - \int_0^T\big[A_G(a)(s)- A_H(a)(s)\big]ds\\
        &= \int_0^T (G_s-H_s)\big[2\gamma(1-a_s )e^{H_s}-2(1-\gamma)a_s e^{-H_s} \big]ds\\
        &- \int_0^T\Big[2\gamma(1-a_s)(e^{G_s}-e^{H_s}) + 2(1-\gamma)a_s(e^{-G_s}-e^{-H_s})] \Big]ds\\
        &=\int_0^TF_{a,H_s}(G_s)ds,
    \end{split}
\end{equation}
where for $\Check{a}\in [0,1]$, $x,y\in \R$,
\begin{equation*}
    F_{\Check{a},y}(x):= 2\gamma(1-\Check{a})\big[(x-y)e^y-(e^x-e^y) \big]-2\Check{a}(1-\gamma)\big[(x-y)e^{-y}+ e^{-x}-e^{-y} \big].
\end{equation*}
We have that $F_{\Check{a},y}$ is a concave function of $x$ and $F_{\Check{a},y}'(y)=0$ so it reaches its maximum at $y$, where $F_{\Check{a},y}(y)=0$. It follows that $\int_0^TF_{a,H_s}(G_s)ds \leq 0$, hence the result.
\end{proof}

We will often make use of the following result: for any $a\in \mathcal{D}_{[0,1]}^T$ such that $I_T(a)<\infty$, for any $t\in [0,T]$,
\begin{equation}\label{DecompoI}
    I_T(a) = I_t(a) + I_{T-t}(a(. + t)).
\end{equation}
In particular, as $I$ is a positive functional, for any $t\in [0,T]$ $I_t(a)\leq I_T(a)$. To prove that, decomposing $J_{T,G}(a)$, for $G\in \mathcal{C}^{\infty}([0,T])$, as 
\begin{equation*}
    \begin{split}
        J_{T,G}(a) = J_{t,G}(a) + J_{T-t,G}(a(.+t)),
    \end{split}
\end{equation*}
we immediately get that $I_T(a) \leq I_t(a) + I_{T-t}(a(. + t))$. To prove the reverse inequality, fix $\varepsilon>0$ and consider $G\in \mathcal{C}^1([0,t])$ and $H\in \mathcal{C}^1([0,T-t])$ such that
\begin{equation*}
    J_{t,G}(a) + J_{T-t,H}(a(.+t)) \geq I_t(a) + I_{T-t}(a(. + t))-2\varepsilon.
\end{equation*}
Taking smooth approximations $(K_n)_{n\geq 1}$ of $G \mathds{1}_{[0,t)} + H(.-t)\mathds{1}_{[t,T]} $ in $\mathcal{C}^1([0,T])$, we have that
$$I_T(a) \geq \underset{n\rightarrow \infty}{\overline{\lim}} J_{T,K_n}(a) =  J_{t,G}(a) + J_{T-t,H}(a(.+t)) \geq I_t(a) + I_{T-t}(a(. + t))-2\varepsilon. $$ This holds for every $\varepsilon$, thus the converse inequality.
\begin{cor}\label{Strongsol}
If $a\in \mathcal{D}_{[0,1]}^T$ then $I_T(a)=0$ if and only if $a$ is a weak solution of $\partial_ta = -2(a -\gamma)$. This implies that $a$ is a strong solution of that equation and is therefore in $\mathcal{C}^{\infty}([0,T])$.
\end{cor}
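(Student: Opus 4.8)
I read the phrase ``$a$ is a weak solution of $\partial_t a=-2(a-\gamma)$'' in the natural sense: $a_T G_T-a_0G_0-\int_0^T\partial_sG_s\,a_s\,ds=-2\int_0^TG_s(a_s-\gamma)\,ds$ for every $G\in\mathcal{C}^1([0,T])$. The plan is to prove the two implications separately and then bootstrap regularity; with this definition the equivalence is clean, since the right-hand identity restricted to $G\in\mathcal{C}^\infty_c((0,T))$ already pins down $a$.

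For the ``if'' direction I would \emph{not} invoke Proposition \ref{Inversion}, since it presupposes $a\in\mathcal{C}^2$ and regularity has not yet been established. Instead, integrate $J_{T,G}$ by parts (exactly as in the proof of Proposition \ref{Inversion}, obtaining $J_{T,G}(a)=\int_0^TG_s\partial_sa_s\,ds-\int_0^TA_G(a)(s)\,ds$) and substitute the weak identity for the term $\int_0^T G_s\,\partial_s a_s\,ds$. Recalling $A_G$ from \eqref{AG} and $-2(a_s-\gamma)=2\gamma(1-a_s)-2(1-\gamma)a_s$, this gives
$$
J_{T,G}(a)=\int_0^T\Big\{2\gamma(1-a_s)\big[1+G_s-e^{G_s}\big]-2(1-\gamma)a_s\big[e^{-G_s}+G_s-1\big]\Big\}\,ds\,.
$$
Since $e^x\ge 1+x$ for all real $x$, the first bracket is $\le0$ and the second is $\ge0$, while $\gamma(1-a_s),(1-\gamma)a_s\ge0$; hence $J_{T,G}(a)\le0$ for every $G$, so $I_T(a)\le0$. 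Because $I_T(a)\ge J_{T,0}(a)=0$ (as $A_0(a)\equiv0$), we conclude $I_T(a)=0$.

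For the ``only if'' direction, assume $I_T(a)=0$. Since $I_T(a)<\infty$, Proposition \ref{Continu} gives $a\in\mathcal{C}([0,T])$. Fix $G\in\mathcal{C}^1([0,T])$ and set $\phi(\varepsilon):=J_{T,\varepsilon G}(a)$; then $\phi(\varepsilon)\le I_T(a)=0=\phi(0)$, so $\varepsilon=0$ maximizes $\phi$. The only $\varepsilon$-dependence sits in the bounded smooth functions $e^{\pm\varepsilon G_s}$ on $[-1,1]\times[0,T]$, so differentiation under the integral is legitimate, $\phi$ is differentiable, and $\phi'(0)=0$. Computing $\phi'(0)$ (using $\partial_\varepsilon A_{\varepsilon G}(a)|_{\varepsilon=0}=2G(\gamma-a)$) yields
$$
a_T G_T-a_0G_0-\int_0^T\partial_sG_s\,a_s\,ds+2\int_0^TG_s(a_s-\gamma)\,ds=0\qquad\text{for all }G\in\mathcal{C}^1([0,T])\,,
$$
which is exactly the weak formulation above. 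Restricting to $G\in\mathcal{C}^\infty_c((0,T))$ shows the distributional derivative of $a$ on $(0,T)$ equals the continuous function $-2(a-\gamma)$, hence $a\in\mathcal{C}^1([0,T])$ and $\partial_t a=-2(a-\gamma)$ holds classically (extending to the endpoints by continuity); iterating, the right-hand side is then $\mathcal{C}^1$, so $a\in\mathcal{C}^2$, and by induction $a\in\mathcal{C}^\infty([0,T])$. Thus $a$ is a strong solution.

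Main obstacle: there is no genuinely hard step, but the delicate point is the ``if'' direction—one must resist using Proposition \ref{Inversion} (which would be circular, as it assumes $\mathcal{C}^2$ regularity) and instead derive $J_{T,G}(a)\le0$ directly from the weak formulation through the elementary inequality $e^x\ge1+x$. The remaining ingredients—differentiation under the integral sign (justified by uniform bounds on $[-1,1]\times[0,T]$) and the ODE bootstrap—are routine.
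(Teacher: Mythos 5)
Your proof is correct, and it takes a route that differs from the paper's in both directions. For the ``only if'' part, the paper also uses a first-order condition at the maximizer $G=0$, but it restricts to \emph{constant} test functions $G\equiv x$ on sub-intervals $[0,t]$ and therefore needs the decomposition \eqref{DecompoI} to deduce $I_t(a)=0$ for every $t$, obtaining the integrated identity $a_t-a_0=-2\int_0^t a_s\,ds+2\gamma t$ one endpoint at a time; your perturbation $\phi(\varepsilon)=J_{T,\varepsilon G}(a)$ with arbitrary $G\in\mathcal{C}^1([0,T])$ delivers the full weak formulation in one stroke and dispenses with \eqref{DecompoI}. For the ``if'' part, the paper simply invokes Proposition \ref{Inversion} with $H=0$; as you note, that proposition is stated for $a\in\mathcal{C}^2$, so strictly speaking the paper is implicitly using that weak solutions of the linear ODE are smooth before applying it, whereas your direct substitution of the weak identity into $J_{T,G}(a)$ combined with $e^x\ge 1+x$ settles the direction with no regularity hypothesis at all. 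Your computation of $\phi'(0)$ and the sign analysis of the two brackets are both correct, and your bootstrap to $\mathcal{C}^\infty$ spells out what the paper only asserts. The trade-off is that the paper's constant-test-function argument is computationally lighter (one explicit scalar function $F_t$), while yours is more self-contained and avoids any appearance of circularity.
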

\begin{proof}
If $a\in \mathcal{D}_{[0,1]}^T$ is a solution of $\partial_ta = -2(a -\gamma)$, by Proposition \ref{Inversion}, $I_T(a) = J_H(a)$ with $H=0$, so $I_T(a)=0$. 

Conversely, fix $a \in \mathcal{D}_{[0,1]}^T $ and assume that $I_T(a)=0$. For $t\in [0,T]$, consider the function $F_t:\R \rightarrow \R$ defined by
$F_t(x)=J_{t,x}(a)$, where $x$ refers to the function that is constant in time, equal to $x$. Then
$$ F_t(x) = x(a_t-a_0) + 2\int_0^t\left[\gamma (e^x-1) - (1-\gamma)(e^{-x}-1) \right]a_s ds -2\gamma t (e^x-1). $$
By \eqref{DecompoI}, we have
$I_T(a) \geq I_t(a)$. As $I_T(a)=0$, for any $t\in [0,T]$, $I_t(a)=0$. Evaluating $J_G(a)$ with $G$ constant equal to $x$ yields that $F_t \leq 0$ on $\R$. As $F_t(0)=0$, $0$ is a local maximum for $F_t$ so $F_t'(0)=0$. This implies that for any $t\in [0,T]$,
$$a_t-a_0 =-2\int_0^ta_sds + 2\gamma, $$
so $a$ is a solution of $\partial_ta = -2(a -\gamma)$.
\end{proof}

\subsection{The $I_T$ -- density}

 The proof follows the same steps as in the seminal papers \cite{QRV} and \cite{BLM}. First we approximate trajectories by ones which follow the hydrodynamic equation on a small time interval (Lemma \ref{Pi1}) and are uniformly bounded away from $0$ and $1$ (Lemma \ref{PI2}). Then, we regularize such trajectories in time (Lemma \ref{PI3}). 
\begin{Def}
    A set $A\subset \mathcal{D}_{[0,1]}^T$ is said to be $I_T$ -- dense if for any $a$ in $\mathcal{D}_{[0,1]}^T$ such that $I_T(a)<\infty$, there exists a sequence $(a_p)_{p\geq 1}$ of elements in $A$ such that
    $$a_p \underset{p \rightarrow \infty}{\longrightarrow}a~ \text{in}~ ~ \mathcal{D}_{[0,1]}^T~ ~ ~ \text{and}~ ~ ~ I_T(a_p)\underset{p \rightarrow \infty}{\longrightarrow} ~ I_T(a). $$
     
\end{Def}
Let $B^T$ be the set of elements $a$  in $\mathcal{C}^2([0,T])$ for which there exists $H\in \mathcal{C}^1([0,T])$ such that $a$ and $H$ are related by the ordinary differential equation \eqref{EDP}.
\begin{thm} \label{I-dense}
The set $B^T$ is $I_T$ -- dense.
\end{thm}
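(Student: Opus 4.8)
The plan is to run the classical three‑step approximation of \cite{QRV,BLM} and then observe that its end product automatically lies in $B^T$. By Proposition \ref{Continu} we may assume from the outset that $a\in\mathcal{C}([0,T])$ with $I_T(a)<\infty$. The key structural fact is that for a trajectory $a$ of class $\mathcal{C}^2$ with $\inf_{[0,T]}a>0$ and $\sup_{[0,T]}a<1$, the ODE \eqref{EDP} is explicitly solvable: writing $z(t)=e^{H(t)}$, \eqref{EDP} becomes the quadratic
\begin{equation*}
2\gamma\big(1-a(t)\big)\,z(t)^2 \;-\; \partial_t a(t)\,z(t) \;-\; 2(1-\gamma)\,a(t)\;=\;0 ,
\end{equation*}
whose leading and constant coefficients have opposite signs (since $0<a(t)<1$), so it has a unique positive root
\begin{equation*}
z(t) \;=\; \frac{\partial_t a(t)+\sqrt{(\partial_t a(t))^2+16\,\gamma(1-\gamma)\,a(t)\big(1-a(t)\big)}}{4\,\gamma\big(1-a(t)\big)} ,
\end{equation*}
which is of class $\mathcal{C}^1$ in $t$ because its argument under the square root is $\mathcal{C}^1$ and strictly positive. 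Setting $H=\log z\in\mathcal{C}^1([0,T])$ gives a solution of \eqref{EDP}, hence $a\in B^T$, and Proposition \ref{Inversion} yields $I_T(a)=J_{T,H}(a)<\infty$. Thus it suffices to approximate, with convergence of the rate functions, an arbitrary finite‑cost trajectory by $\mathcal{C}^2$ trajectories taking values in a compact subinterval of $(0,1)$.

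\textbf{The three approximations.} I carry out the steps in the order announced before the statement. First (Lemma \ref{Pi1}): replace $a$ by a trajectory $a^{(1)}_p$ that coincides, on a short initial interval $[0,1/p]$, with the relaxation path of the mass ODE $\partial_t m=-2(m-\gamma)$ issued from $a(0)$, glued to a suitably time‑compressed copy of $a$ on $[1/p,T]$; since the relaxation path has zero cost (Corollary \ref{Strongsol}), the decomposition identity \eqref{DecompoI} reduces the cost of $a^{(1)}_p$ to that of the time‑compressed part, which converges to $I_T(a)$ by the (routine) continuity of $a\mapsto I_T(a)$ under time reparametrization, while $a^{(1)}_p\to a$ in $\mathcal{D}_{[0,1]}^T$ and $a^{(1)}_p$ is now smooth and interior‑valued near $t=0$. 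Second (Lemma \ref{PI2}): push the trajectory uniformly into $(0,1)$, e.g. by passing to the convex combination $(1-\varepsilon)\,a^{(1)}_p+\varepsilon\,b$ with $b$ a fixed interior hydrodynamic trajectory, and show $I_T$ of the combination converges to $I_T(a^{(1)}_p)$ as $\varepsilon\to0$: lower semicontinuity of $I_T$ gives one inequality, while an explicit bound on $J_{T,G}$ using the concavity of the nonlinearity $A_G$ exploited in the proof of Proposition \ref{Inversion} gives the other. Third (Lemma \ref{PI3}): mollify in time; convolution with a smooth kernel produces a $\mathcal{C}^\infty$ trajectory, still bounded away from $0$ and $1$ for the mollification parameter small, with $I_T$ of the mollification converging to $I_T$ of its argument. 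Composing the three constructions and diagonalizing yields $a_p\in B^T$ with $a_p\to a$ and $I_T(a_p)\to I_T(a)$.

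\textbf{Main obstacle.} In each of the three steps the bound $\liminf_p I_T(a_p)\ge I_T(a)$ comes for free from the lower semicontinuity of $I_T$, so the real content is the matching upper bound $\limsup_p I_T(a_p)\le I_T(a)$. The delicate point is Lemma \ref{Pi1}: one must simultaneously control the cost of the initial relaxation segment and of the time‑compression, and verify that the gluing creates no extra cost; this is where the decomposition \eqref{DecompoI}, the vanishing cost of hydrodynamic solutions, and a careful choice of the reparametrization all enter. The remaining two steps are more mechanical, relying on the joint continuity of $(a,G)\mapsto J_{T,G}(a)$ together with the uniform estimates available once the trajectories are bounded away from $0$ and $1$, so that $A_G$ and its $G$‑derivative are Lipschitz on the relevant range.
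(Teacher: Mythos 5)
Your overall strategy is the same as the paper's: run the three approximation lemmas (initial relaxation segment, pushing into the interior by convex combination with a hydrodynamic path, time mollification) and then observe that a $\mathcal{C}^2$ trajectory bounded away from $0$ and $1$ lies in $B^T$ because the ODE \eqref{EDP} reduces to a quadratic in $e^{H}$ with a unique positive, $\mathcal{C}^1$ root. That last observation, and steps two and three, match the paper's argument (modulo the fact that in step two what is used is affinity of $a\mapsto J_{T,G}(a)$, not concavity of $A_G$ in $G$, and that in step three the paper must use a time-dependent mollification parameter vanishing near $t=0$ to stay inside $\Pi_1^T$ and must control the commutator term $r^p$).

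There is, however, a genuine gap in your first step. You glue the relaxation path $\lambda$ started from $a(0)$ on $[0,1/p]$ directly to a time-compressed copy of $a$ on $[1/p,T]$. But $\lambda(1/p)\neq a(0)$ in general, so the glued trajectory jumps at $t=1/p$; by Proposition \ref{Continu} its cost is then $+\infty$, and the approximation fails. The paper resolves this by inserting a second segment, the \emph{time-reversed} relaxation path $\lambda(2\delta-t)$ on $[\delta,2\delta]$, which brings the trajectory back to $a(0)$ before attaching a time-\emph{shifted} copy $a(\cdot-2\delta)$ (discarding the tail of $a$, so that the cost of that piece is bounded by $I_{T-2\delta}(a)\le I_T(a)$ via \eqref{DecompoI}, with no reparametrization-continuity needed). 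The reversed segment does \emph{not} have zero cost — only the forward relaxation does — and showing that its cost vanishes as $\delta\downarrow 0$ is precisely the delicate computation of Lemma \ref{Pi1}: one checks the reversed path stays strictly inside $(0,1)$, applies Proposition \ref{Inversion} with the explicit $H$ of \eqref{H}, and lets $\delta\to 0$. Your write-up omits this segment entirely and instead leans on an unproved "routine continuity of $I_T$ under time reparametrization", so the matching upper bound $\limsup_p I_T(a_p)\le I_T(a)$ in step one is not established.
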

For the proof, of Theorem \ref{I-dense}, we establish the $I_T$ -- density of some intermediate sets. 
  Denote by $\Pi_1^T$ the set of elements in $\mathcal{D}_{[0,1]}^T$ such that for any $a\in \Pi_1^T$, there is a $\delta>0$ such that on $[0,\delta]$, $a$ is a solution of the ordinary differential equation
\begin{equation}\label{hydordyn}
    \partial_t\lambda = -2\lambda + 2\gamma .
\end{equation}

\begin{lem}\label{Pi1}
    The set $\Pi_1^T$ is $I_T$ -- dense in $\mathcal{D}_{[0,1]}^T$.
\end{lem}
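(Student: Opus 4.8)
The plan is to take an arbitrary $a \in \mathcal{D}_{[0,1]}^T$ with $I_T(a) < \infty$ and, for each small $\delta > 0$, to glue onto $a$ an initial stretch of the relaxation trajectory so that the resulting trajectory lies in $\Pi_1^T$ and approximates $a$ both in the Skorokhod topology and in $I_T$-value as $\delta \to 0$. Concretely, define $a^\delta$ on $[0,T]$ by letting $a^\delta(t)$ be the solution of \eqref{hydordyn} with $a^\delta(0) = a(0)$ for $t \in [0,\delta]$, and then on $[\delta, T]$ let $a^\delta$ follow $a$ appropriately reparametrized/shifted — the cleanest choice being $a^\delta(t) = a(t - \delta')$ on $[\delta, T]$ for a suitable matching, or (simpler) to keep $a^\delta(t) = a((t-\delta) T/(T-\delta))$ so that the domains line up, or even just $a^\delta(t) = a(t)$ for $t \ge \delta$ after first noting $a^\delta(\delta)$ is within $O(\delta)$ of $a(\delta)$ by continuity of $a$ (Proposition \ref{Continu}) and the modulus bound \eqref{borne}; in that case one interpolates linearly on a tiny sub-interval to patch the jump. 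Since $a$ is continuous with a quantitative modulus of continuity controlled by $I_T(a)$ via \eqref{borne}, $a^\delta \to a$ uniformly (hence in $\mathcal{D}_{[0,1]}^T$) as $\delta \to 0$.

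The main work is the convergence $I_T(a^\delta) \to I_T(a)$. Here I would use the decomposition identity \eqref{DecompoI}: $I_T(a^\delta) = I_\delta(a^\delta \restriction_{[0,\delta]}) + I_{T-\delta}(a^\delta(\cdot + \delta))$. The first term vanishes because $a^\delta$ solves the hydrodynamic ODE \eqref{hydordyn} on $[0,\delta]$, so $I_\delta(a^\delta\restriction_{[0,\delta]}) = 0$ by Corollary \ref{Strongsol}. For the second term, one shows $I_{T-\delta}(a^\delta(\cdot+\delta)) \to I_T(a)$ as $\delta \to 0$: by lower semicontinuity of $I$ and $a^\delta(\cdot+\delta) \to a$ one gets $\liminf I_{T-\delta}(a^\delta(\cdot+\delta)) \ge I_T(a)$ (after extending trajectories to a common time interval, or arguing with $I_{T}$ of the extensions), and for the matching upper bound one uses again \eqref{DecompoI} to write $I_T(a) = I_\delta(a\restriction_{[0,\delta]}) + I_{T-\delta}(a(\cdot+\delta))$ with $I_\delta(a\restriction_{[0,\delta]}) \ge 0$, together with a continuity/stability estimate showing $|I_{T-\delta}(a^\delta(\cdot+\delta)) - I_{T-\delta}(a(\cdot+\delta))| \to 0$ since the two trajectories differ by $O(\delta)$ uniformly and agree except near the splicing point. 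Combining, $\limsup_\delta I_T(a^\delta) \le \limsup_\delta I_{T-\delta}(a(\cdot+\delta)) + o(1) \le I_T(a)$, and with the $\liminf$ bound this gives the required convergence.

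The step I expect to be the main obstacle is the stability estimate: controlling $|I_{T-\delta}(a^\delta(\cdot+\delta)) - I_{T-\delta}(a(\cdot+\delta))|$, i.e. showing that a small uniform perturbation of a trajectory with finite rate function produces a small change in $I_T$. This is not automatic from lower semicontinuity alone (which only gives one inequality), and it requires exploiting the explicit variational structure of $J_{T,G}$ in \eqref{fonctA}–\eqref{AG} — plugging the near-optimal $G$ for one trajectory into the functional for the other and bounding the discrepancy using that $A_G(a)$ is Lipschitz in $a$ uniformly on compact sets of $G$, the caveat being that near-optimal $G$ need not be bounded. One circumvents this exactly as in \cite{QRV, BLM}: first reduce (via later lemmas such as Lemma \ref{PI2} on trajectories bounded away from $0$ and $1$, and Lemma \ref{PI3} on time-regularization) to trajectories for which the optimal $H$ from Proposition \ref{Inversion} is smooth and bounded, so that the perturbation argument closes; but for Lemma \ref{Pi1} itself it suffices that the splicing only alters $a$ on $[0,\delta]$ plus a vanishing patch, so the difference in $J_{T,G}$ is an integral over a set of measure $O(\delta)$ of quantities bounded by $\|G\|$-dependent constants — and one takes $G$ ranging over a fixed compact family after a preliminary truncation, which is legitimate because extreme $G$ contribute negatively. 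I would present this truncation carefully, as it is the one genuinely delicate point.
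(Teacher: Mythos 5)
Your overall strategy (prepend a relaxation segment, show its cost vanishes, use lower semicontinuity for the reverse inequality, and invoke \eqref{DecompoI} and Corollary \ref{Strongsol} for the zero-cost piece) is the same as the paper's, but the proposal has a genuine gap at exactly the point you flag as the main obstacle. The difficulty is that after running the relaxation path on $[0,\delta]$ you sit at $\lambda(\delta)\neq a(0)$, so every variant you propose requires either a patch whose cost must be estimated, or a ``stability estimate'' of the form $|I(a^\delta)-I(a)|\to 0$ for trajectories that are uniformly $O(\delta)$-close. The latter is not available in the generality you invoke it: $I_T$ is only lower semicontinuous, and uniformly close trajectories can have wildly different rate (a trajectory with an arbitrarily small jump already has infinite rate by Proposition \ref{Continu}). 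Your proposed fix --- truncating to a compact family of $G$ because ``extreme $G$ contribute negatively'' --- is precisely the hard part and is not justified: for a general finite-rate trajectory the near-optimal $G$ is the explicit $H$ of Proposition \ref{Inversion}, which involves $\log$ of the (possibly unbounded) derivative and need not range over a fixed compact set.

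The paper sidesteps all of this with one device you are missing: on $[\delta,2\delta]$ it follows the \emph{time-reversed} relaxation path $\lambda(2\delta-t)$, so that $a^\delta(2\delta)=\lambda(0)=a(0)$ exactly and the remainder of the trajectory is just $a(\cdot-2\delta)$, with cost $\le I_T(a)$ by \eqref{DecompoI}; no patching and no stability estimate are needed. The only extra cost is that of the short reversed segment, which is computed explicitly: since $\bar a(s)=\lambda(\delta-s)$ is bounded away from $0$ and $1$ (using $a(0)\in(0,1)$), Proposition \ref{Inversion} applies with the explicit $H$ of \eqref{H}, and $\int_0^\delta\{H_s\partial_s\bar a_s-A_H(\bar a)(s)\}\,ds\to 0$ as $\delta\downarrow 0$ because the integrand is continuous. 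If you want to keep a linear patch instead of the reflected relaxation path, you must carry out the analogous explicit computation of its cost via Proposition \ref{Inversion} (checking the segment stays away from $0$ and $1$); as written, that estimate is asserted rather than proved.
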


\begin{proof}

Consider $a$ in $\mathcal{D}_{[0,1]}^T$ such that $I_T(a)<\infty$ and 
$\lambda:[0,T] \rightarrow [0,1]$ the unique solution of \eqref{hydordyn} with initial condition $a(0)$. For $\delta>0$, define $a^{\delta}$ as follows:
\begin{equation} 
   a^{\delta}(t)= \left\{
    \begin{array}{ll}
        \lambda(t),~ ~\text{if}~ t\in [0,\delta]\\ 
        \lambda(2\delta-t),~ ~\text{if}~ t\in[\delta,2\delta]\\
        a(t-2\delta),~ ~\text{if}~ t\in [2\delta,T].
        \end{array}
\right.
\end{equation}
It is clear that $a^{\delta}$ converges to $a$ in $\mathcal{D}_{[0,1]}^T$ as $\delta \downarrow 0$ and, by construction, $a^{\delta}$ belongs to $\Pi_1^T$. By lower semi continuity of $v \mapsto I_T(v)$, 
$$I_T(a) \leq \underset{\delta \rightarrow 0}{\liminf} ~ I_T(a^{\delta}). $$
We are therefore left to show that
$$I_T(a) \geq \underset{\delta \rightarrow 0}{\limsup}~  I_T(a^{\delta}). $$
Decomposing  $J_{T,G}(a^{\delta})$ into the sum of the contributions on each time interval $[0,\delta]$, $[\delta,2\delta]$ and $[2\delta,T]$ we get:
\begin{equation}\label{Decompo}
    \begin{split}
       J_{T,G}(a^{\delta}) &= a^{\delta}_T G_T -a^{\delta}_{2\delta}G_{2\delta} -\int_{2\delta}^T \partial_s G_s a^{\delta}_s ds - \int_{2\delta}^TA_G(a^{\delta})(s)ds\\
        &+ a^{\delta}_{2\delta}G_{2\delta}- a^{\delta}_{\delta}G_{\delta}
        -\int_{\delta}^{2\delta} \partial_s G_s a^{\delta}_s ds - \int_{\delta}^{2\delta}A_G(a^{\delta})(s)ds\\
    &+a^{\delta}_{\delta}G_{\delta}- a^{\delta}_0 G_0 - \int_{0}^{\delta} \partial_s G_s a^{\delta}_s ds - \int_{0}^{\delta}A_G(a^{\delta})(s)ds,
    \end{split}
\end{equation}
where we recall that the definition of $A_G$ is given in \eqref{AG}. The first term is bounded above by $I_{T-2\delta}(a)$ and recall, by arguments detailed in the proof of Corollary \ref{Strongsol}, that $I_{T-2\delta}(a) \leq I_{T}(a) $. The last term in \eqref{Decompo} equals $I_{\delta}(a_{\delta})=I_{\delta}(\lambda)=0$ because $\lambda$ solves \eqref{hydordyn} on $[0,\delta]$. Finally, let us show that 
\begin{equation}
    \underset{\delta \rightarrow 0}{\limsup}~ \underset{G\in \mathcal{C}^1([0,T])}{\sup}~ \left\{a^{\delta}_{2\delta}G_{2\delta}- a^{\delta}_{\delta}G_{\delta}
        -\int_{\delta}^{2\delta} \partial_s G_s a^{\delta}_s ds - \int_{\delta}^{2\delta}A_G(a^{\delta})(s)ds \right\} = 0.
\end{equation}
For $G\in \mathcal{C}^1([0,T])$,
\begin{equation}
\begin{split}
        &a^{\delta}_{2\delta}G_{2\delta}- a^{\delta}_{\delta}G_{\delta}
        -\int_{\delta}^{2\delta} \partial_s G_s a^{\delta}_s ds -\int_{\delta}^{2\delta}A_G(a^{\delta})(s)ds = \int_{\delta}^{2\delta}  G_s \partial_s a^{\delta}_s ds - \int_{\delta}^{2\delta}A_G(a^{\delta})(s)ds\\
        &= \int_{0}^{\delta}  G_{2\delta-s} \partial_s a^{\delta}(2\delta-s) ds - \int_{0}^{\delta}A_G(a^{\delta})(2\delta-s)ds,
\end{split}
\end{equation}
therefore,
\begin{equation}
   \underset{G\in \mathcal{C}^1([0,T])}{\sup}~ \left\{a^{\delta}_{2\delta}G_{2\delta}- a^{\delta}_{\delta}G_{\delta}
        -\int_{\delta}^{2\delta} \partial_s G_s a^{\delta}_s ds -\int_{\delta}^{2\delta}A_G(a^{\delta})(s)ds \right\} =  \underset{G\in \mathcal{C}^1([0,T])}{\sup}~ \left\{\int_0^{\delta}G_s \partial_s\overline{a}_s ds -\int_0^{\delta}A_G(\overline{a})(s)ds \right\},
\end{equation}
where $\overline{a}(s) = a^{\delta}(\delta-s) = \lambda(\delta - s)$. Using that $a_0\in (0,1)$ and that $\lambda(t) = \gamma+ \left(a_0 - \gamma \right)e^{-2t}$ we have that for any $t>0$,
$$0< \gamma(1-e^{-2\delta})\leq \overline{a}(s) \leq \gamma + \left(1-\gamma\right)e^{-2\delta}<1.$$
We can therefore define the continuous function
\begin{equation}\label{H}
    H(t)= \log \left(\frac{\overline{a}'(t) + \sqrt{\overline{a}'(t)^2 + 16\gamma(1-\gamma)(1-\overline{a}(t))\overline{a}(t)} }{4\gamma(1-\overline{a}(t))} ~ \right)
\end{equation}
on $[0,\delta]$. Furthermore, this belongs to $\mathcal{C}^1([0,T])$, and $\overline{a}$ and $H$ are related by \eqref{EDP}. It follows by Proposition \ref{Inversion} applied to the time interval $[0,\delta]$ that
$$\underset{G\in \mathcal{C}^1([0,T])}{\sup} ~ \left\{\int_0^{\delta}G_s \partial_s\overline{a}_s ds -\int_0^{\delta}A_G(\overline{a})(s)ds \right\}=\int_0^{\delta}H_s \partial_s\overline{a}_s ds - \int_0^{\delta}A_H(\overline{a})(s)ds.$$
Using that $H_s \partial_s\overline{a}_s$ and $A_H(\overline{a})(s)$ are continuous functions, the second term converges to zero as $\delta\downarrow 0$, hence the result.
\end{proof}
Denote by $\Pi_2^T$ the set of elements $a$ in $\Pi_1^T$ such that for every $\delta>0$, $a$ is uniformly bounded away from $0$ and $1$ on $[\delta,T]$, that is, there is an $\varepsilon>0$, such that for all $t\in [\delta,T]$, $\varepsilon\leq a(t)\leq 1-\varepsilon$.
\begin{lem}\label{PI2}
    The set $\Pi_2^T$ is $I_T$ -- dense in $\Pi_1^T$.
\end{lem}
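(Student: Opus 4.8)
The plan is to perturb the given trajectory by a straight‑line homotopy towards the constant profile $\gamma$ and to exploit the convexity of $I_T$. Let $a\in\Pi_1^T$ with $I_T(a)<\infty$ (otherwise there is nothing to prove) and, by definition of $\Pi_1^T$, fix $\delta_0>0$ such that $a$ solves \eqref{hydordyn} on $[0,\delta_0]$. For $\varepsilon\in(0,1)$ I would set
\[
a_\varepsilon \;:=\; (1-\varepsilon)\,a \;+\; \varepsilon\,\gamma ,
\]
where $\gamma$ denotes here the trajectory that is constant in time and equal to $(\alpha+\beta)/2$.

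The first step is to check that $a_\varepsilon\in\Pi_2^T$ and that $a_\varepsilon\to a$. Since $a(t)\in[0,1]$ and $\gamma\in(0,1)$, the trajectory $a_\varepsilon$ takes values in $[\varepsilon\gamma,\,1-\varepsilon(1-\gamma)]\subset(0,1)$, so it is uniformly bounded away from $0$ and $1$ on the whole of $[0,T]$, in particular on every interval $[\delta,T]$. Moreover \eqref{hydordyn} is an affine ODE and both $a$ and the constant $\gamma$ solve it on $[0,\delta_0]$ (for $\gamma$, $\partial_t\gamma=0=-2\gamma+2\gamma$), hence so does the convex combination $a_\varepsilon$; thus $a_\varepsilon\in\Pi_1^T$ and therefore $a_\varepsilon\in\Pi_2^T$. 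Finally $a_\varepsilon\to a$ uniformly on $[0,T]$ as $\varepsilon\downarrow 0$, hence in $\mathcal{D}_{[0,1]}^T$.

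It then remains to prove $I_T(a_\varepsilon)\to I_T(a)$, and the key observation is that $I_T$ is convex. Indeed, for each fixed $G\in\mathcal{C}^1([0,T])$ the map $a\mapsto A_G(a)(s)$ is affine in $a_s$ by \eqref{AG}, so $a\mapsto J_{T,G}(a)$ is affine, and $I_T=\sup_G J_{T,G}$ is a supremum of affine functionals, hence convex. Moreover the constant trajectory $\gamma$ is a (strong) solution of $\partial_t a=-2(a-\gamma)$, so $I_T(\gamma)=0$ by Corollary \ref{Strongsol}. Consequently
\[
I_T(a_\varepsilon)\;\le\;(1-\varepsilon)\,I_T(a)\;+\;\varepsilon\,I_T(\gamma)\;=\;(1-\varepsilon)\,I_T(a),
\]
so $\limsup_{\varepsilon\downarrow 0} I_T(a_\varepsilon)\le I_T(a)$; combined with the lower semicontinuity of $I_T$ and $a_\varepsilon\to a$, this yields $I_T(a_\varepsilon)\to I_T(a)$, which proves the lemma.

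Unlike the analogous density step for the full density field in \cite{QRV,BLM}, there is essentially no technical obstacle here: the total‑mass functional is convex because the nonlinearity $A_G$ enters affinely in $a$, and that is precisely what makes the straight‑line perturbation towards $\gamma$ decrease the cost. The only point that requires a bit of care is that the target profile must simultaneously lie in the interior of $[0,1]$, have zero cost, and be compatible with the $\Pi_1^T$ constraint — all of which the constant profile $\gamma$ satisfies.
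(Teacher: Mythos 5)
Your construction is essentially the paper's: the paper also takes the linear interpolation $a^{\varepsilon}=(1-\varepsilon)a+\varepsilon\lambda$ with a zero-cost trajectory and exploits the affinity of $J_{T,G}$ in $a$ (your convexity argument for $I_T$ together with $I_T(\gamma)=0$ is the same estimate, phrased through the supremum rather than term by term), and your verifications that $a_\varepsilon\in\Pi_1^T$ (the ODE is affine), that $a_\varepsilon$ is bounded away from $0$ and $1$, and the combination with lower semicontinuity are all correct. The one substantive difference is the choice of interpolant: the paper uses $\lambda$, the solution of \eqref{hydordyn} started from $a(0)$, whereas you use the constant trajectory $\gamma$. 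Both have zero cost, so both prove the lemma as literally stated (the definition of $I_T$-density imposes no constraint on initial values). However, the paper's choice guarantees $a^{\varepsilon}(0)=a(0)$, while yours gives $a_\varepsilon(0)=(1-\varepsilon)a(0)+\varepsilon\gamma\neq a(0)$ in general. Since this density result is ultimately fed into the large deviations lower bound through the conditional functional $I_T(\cdot\,|m_0)$, which is $+\infty$ off trajectories starting at $m_0$, an approximating sequence that moves the initial value would be useless there. So if you intend the lemma to serve its role in the paper, you should replace the constant $\gamma$ by the relaxation trajectory $\lambda$ issued from $a(0)$; everything else in your argument goes through verbatim, since $I_T(\lambda)=0$ as well and $\lambda$ stays in $(0,1)$ for $t>0$ by the explicit formula $\lambda(t)=\gamma+(a(0)-\gamma)e^{-2t}$.
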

\begin{proof}
Fix an $a$ in $\Pi_1^T$ such that $I_T(a)<\infty$. For $\varepsilon>0$, introduce
$a^{\varepsilon}=(1-\varepsilon)a + \varepsilon \lambda$, 
where $\lambda:[0,T] \rightarrow [0,1]$ is the unique solution of \eqref{hydordyn} with initial condition $a(0)$. By definition, $a^{\varepsilon}$ belongs to $\Pi_1^T$ and $a^{\varepsilon}$
converges to $a$ in $\mathcal{D}_{[0,1]}^T$ as $\varepsilon \downarrow 0$. Furthermore, using that 
$$\lambda(t) = \gamma + \left(a_0-\gamma\right)e^{-2t},$$
with $a_0\in [0,1]$, the following inequalities hold: for any $\delta>0$ and $t\in [\delta,T]$,
$$0<\varepsilon \gamma\left(1-e^{-2\delta} \right)\leq a^{\varepsilon}(t) \leq 1-\varepsilon + \varepsilon\left[\gamma + \left(1-\gamma\right)e^{-2\delta} \right]<1  $$
so $a^{\varepsilon}$ is in $\Pi_2^T$. Again, by semi continuity of $I_T$, $\underset{\varepsilon\downarrow 0}{\liminf} I_T(a^{\varepsilon}) \geq I_T(a)$. Then, by linearity of $J_{T,G}$, $$J_{T,G}(a^{\varepsilon}) = (1-\varepsilon)J_{T,G}(a) + \varepsilon J_{T,G}(\lambda)\leq (1-\varepsilon)J_{T,G}(a)\leq (1-\varepsilon)I_T(a),$$
where we used that $J_{T,G}(\lambda)\leq 0$ in the first inequality. Taking the $\limsup$ we get the desired result.
\end{proof}

Denote by $\Pi_3^T$ the set of elements in $\Pi_2^T$ belonging to $\mathcal{C}^2([0,T])$. 
\begin{lem}\label{PI3}
    The set $\Pi_3^T$ is $I_T$ -- dense in $\Pi_2^T$.
\end{lem}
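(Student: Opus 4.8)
The plan is to regularise $a$ in time by convolving it with a one-sided mollifier, in the spirit of \cite{QRV, BLM}. The one genuinely delicate point — and the main obstacle — is that the regularised trajectory must again be an honest element of $\Pi_2^T$: it has to stay in $[0,1]$, it has to solve the hydrodynamic equation \eqref{hydordyn} on some initial interval, and it has to be bounded away from $0$ and $1$ away from the origin. The first requirement is violated by a naive time-mollification when $a(0)\in\{0,1\}$, because the convolution tacitly uses values of $a$ at negative times, where the hydrodynamic extension of $a$ leaves $[0,1]$. I would remove this obstruction with a preliminary small forward time-shift.

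Fix $a\in\Pi_2^T$ with $I_T(a)<\infty$; by Proposition \ref{Continu} it is continuous, and by hypothesis it solves \eqref{hydordyn} on some $[0,\delta]$, so $a(t)=\gamma+(a(0)-\gamma)e^{-2t}$ there. First I would fix $\eta\in(0,\delta)$ and set $b:=a(\cdot+\eta)$ on $[0,T-\eta]$. Since $b(0)=\gamma(1-e^{-2\eta})+a(0)e^{-2\eta}$ lies in $(0,1)$ for every $a(0)\in[0,1]$ and $b$ solves \eqref{hydordyn} near $0$, the trajectory $b$ is bounded away from $0$ and $1$ on all of $[0,T-\eta]$; moreover, by the additivity identity \eqref{DecompoI} and the fact that the discarded piece $a|_{[0,\eta]}$ is a hydrodynamic solution (so $I_\eta(a)=0$), one gets $I_{T-\eta}(b)=I_T(a)$. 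It therefore suffices to approximate $b$ within $\Pi_3^{T-\eta}$ and undo the shift at the end.

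Next I would extend $b$ to $[-\eta,0]$ by the solution of \eqref{hydordyn} — which stays in $[0,1]$ near $0$ precisely because $b(0)\in(0,1)$ — and set $b^\kappa(t)=\int_0^\kappa\iota_\kappa(r)\,b(t-r)\,dr$ with $\iota_\kappa$ a smooth probability density supported in $[0,\kappa]$. Then $b^\kappa$ is $C^\infty$, it takes values in the same compact subinterval of $(0,1)$ as $b$ (it is a convex average of values of $b$), and — the key algebraic point — it still solves \eqref{hydordyn} on an initial interval, because that equation is linear and autonomous, so convolving in time sends solutions to solutions. Hence $b^\kappa\in\Pi_3^{T-\eta}$, and $b^\kappa\to b$ uniformly, hence in $\mathcal D_{[0,1]}^{T-\eta}$, by uniform continuity of $b$ on the enlarged interval.

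Finally, for the cost convergence, lower semicontinuity of $I_{T-\eta}$ gives $\liminf_\kappa I_{T-\eta}(b^\kappa)\ge I_{T-\eta}(b)$, and I would obtain the opposite inequality in the strong form $I_{T-\eta}(b^\kappa)\le I_{T-\eta}(b)$ for every $\kappa$. Fix $G\in\mathcal C^1([0,T-\eta])$; since $v\mapsto A_G(v)$ in \eqref{AG} is affine, plugging $b^\kappa(s)=\int_0^\kappa\iota_\kappa(r)b(s-r)\,dr$ into \eqref{fonctA} and changing variables in the time integrals writes $J_{T-\eta,G}(b^\kappa)$ as the $\iota_\kappa$-average over $r\in[0,\kappa]$ of $J_{[-r,\,T-\eta-r],\,G(\cdot+r)}(b)$, where $J_{[c,d],K}$ denotes the functional \eqref{fonctA} over $[c,d]$ with test function $K$. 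Each such term splits, by additivity over adjacent intervals, into a $[-r,0]$-piece and a $[0,T-\eta-r]$-piece: the former is $\le 0$, since on $[-r,0]$ the trajectory $b$ solves \eqref{hydordyn}, which is \eqref{EDP} with $H\equiv0$, and Proposition \ref{Inversion} identifies $H\equiv0$ as the maximiser of $J$, with value $0$; the latter is at most $I_{T-\eta-r}(b)\le I_{T-\eta}(b)$ by \eqref{DecompoI}. Hence $J_{T-\eta,G}(b^\kappa)\le I_{T-\eta}(b)$ uniformly in $G$, and the supremum over $G$ gives the claim. Undoing the shift — letting $a^\kappa$ equal $b^\kappa(\cdot-\eta)$ on $[\eta,T]$ and the unique solution of \eqref{hydordyn} reaching $b^\kappa(0)$ at time $\eta$ on $[0,\eta]$ — produces a $C^\infty$ element of $\Pi_3^T$ with $I_T(a^\kappa)=I_{T-\eta}(b^\kappa)\to I_T(a)$ and $a^\kappa\to a$ in $\mathcal D_{[0,1]}^T$, which is the required approximating sequence.
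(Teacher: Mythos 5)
Your averaging argument for the cost bound is correct and is in fact cleaner than the paper's: because $J_{T,G}$ is affine in the trajectory, writing $b^\kappa$ as an $\iota_\kappa$-average of time-shifts of $b$ gives $J_{T-\eta,G}(b^\kappa)\le I_{T-\eta}(b)$ exactly, with no error term. The paper instead modulates the width of the mollifier (zero on $[0,\delta]$, constant equal to $1/p$ after time $2\delta$) and must then control a remainder $r^p$ on the transition window $[\delta,2\delta]$ via the explicit maximiser $H$ of Proposition \ref{Inversion}; your route avoids that computation entirely.

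The gap is in the last step, ``undoing the shift,'' and it reintroduces exactly the obstruction you set out to remove. On $[0,\eta]$ you define $a^\kappa$ as the solution of \eqref{hydordyn} with terminal value $b^\kappa(0)$ at time $\eta$, i.e. $a^\kappa(t)=\gamma+(b^\kappa(0)-\gamma)e^{2(\eta-t)}$. Since $b(-r)=a(\eta-r)=\gamma+(a(0)-\gamma)e^{-2(\eta-r)}$, one computes $b^\kappa(0)-\gamma=(a(0)-\gamma)e^{-2\eta}c_\kappa$ with $c_\kappa=\int_0^\kappa\iota_\kappa(r)e^{2r}\,dr>1$, hence $a^\kappa(0)-\gamma=(a(0)-\gamma)\,c_\kappa$: the backward flow amplifies the distance to $\gamma$ by the strict factor $c_\kappa$. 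If $a(0)=1$ (or $0$) --- precisely the case that $\Pi_2^T$ is designed to allow, since it only requires bounds away from $0$ and $1$ on $[\delta',T]$ for each $\delta'>0$ --- then $a^\kappa(t)>1$ for $t\in[0,\tfrac12\log c_\kappa)$, so $a^\kappa\notin\mathcal{D}_{[0,1]}^T$ and the construction fails. There is no painless repair inside your scheme: insisting that $a^\kappa$ solve \eqref{hydordyn} on $[0,\eta]$ and match $b^\kappa(0)$ at time $\eta$ forces this backward flow, and replacing it by a ramp would require a separate cost estimate that does not vanish for fixed $\eta$. Note also that even when $a(0)\in(0,1)$ your approximants satisfy $a^\kappa(0)=\gamma+(a(0)-\gamma)c_\kappa\neq a(0)$; the paper's definition of $I_T$-density does not formally demand preservation of the initial datum, but the application in the lower bound of Theorem \ref{DynamicGD} does (the perturbed process starts from a fixed mass $m_0$), and the paper's construction keeps $a^p(0)=a(0)$ by switching the mollification off near $t=0$. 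That modulation of the mollifier's width near the origin is the ingredient your argument is missing.
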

\begin{proof}
Consider $a\in \Pi_2^T$ such that $I_T(a)<\infty$. By Proposition \ref{Continu}, $a$ is continuous. Let $\delta>0$ be such that $a$ is solution to \eqref{hydordyn} on $[0,3\delta]$. Consider $\phi:\R \rightarrow \R$ smooth with compact support in $(0,1)$ and $\int_0^1\phi(s)ds=1$. For $\epsilon>0$, define
$\psi(\epsilon,s)= \frac{1}{\epsilon}\phi\left( \frac{s}{\epsilon}\right). $
Then, $\left(\psi(\epsilon,.) \right)_{\epsilon>0}$ is an approximation of the identity on compact sets in the sense that for any $\rho\in \mathcal{C}(\R)$ with compact support, $t\mapsto \int_{\R}\rho(t+s)\psi(\epsilon,s)ds$ converges uniformly to $\rho$ on $\R$ as $\epsilon\rightarrow 0$. Consider $\varepsilon:[0,T] \rightarrow [0,1]$ a smooth non decreasing function such that
\begin{equation} 
   \varepsilon(t)= \left\{
    \begin{array}{ll}
        0,~ ~\text{if}~ t\in [0,\delta]\\ 
        0<\varepsilon(t)<1,~ ~\text{if}~ t\in (\delta,2\delta)\\
        1,~ ~\text{if}~ t\in [2\delta,T]
        \end{array}
\right.
\end{equation}
and for $p\in \N$, define $\varepsilon_p(t)=\frac{\varepsilon(t)}{p}$. Introduce the sequence
$a^p(t) = \int_0^1a(t+\varepsilon_p(t)s)\phi(s)ds $, where we extend $a$ on $[T,T+1]$ by letting, for $t\in [0,1]$,
$a(t+T) = \Tilde{\lambda}(t) $,
where $\Tilde{\lambda}$ is the solution to \eqref{hydordyn} with initial condition $a(T)$. 

By construction of $\phi$, the sequence $a^p$ converges to $a$ in $\mathcal{D}_{[0,1]}^T$. For $t\in [0,\delta]$, $a^p(t)=a(t)$, where $a$ is solution to \eqref{hydordyn}, so $a^p\in \Pi_1^T$. As $a\in \Pi_2^T$, the convolution product ensures that $a^p$ is also in $\Pi_2^T$. For $t\in [0,2\delta)$, $a^p(t) =\int_0^t a(t+\varepsilon_p(t)s)\phi(s)ds$. As $a$ is smooth on $[0,3\delta)$, for $p$ large enough, $a^p$ is smooth on $[0,2\delta)$. For $t\in (\delta,T]$, $\varepsilon_p(t)>0$ and the following change of variable holds:
\begin{equation*}
    \begin{split}
        a^p(t) = \int_0^{\varepsilon_p(t)}a(t+s)\frac{1}{\varepsilon_p(t)}\phi\left(\frac{s}{\varepsilon_p(t)}\right)ds = \int_t^{t+\varepsilon_p(t)}a(s)\psi(\varepsilon_p(t),s-t)ds = \int_{\R}a(s)\psi(\varepsilon_p(t),s-t)ds
        \end{split}
\end{equation*}
and this is smooth in $t$. Hence, $a^p$ is smooth on $[0,T]$ and it follows that $a^p\in \Pi_3^T$. 

To conclude the proof, let us check that $\underset{p\rightarrow \infty}{\overline{\lim}} I_T(a^p) \leq I_T(a)$. For $G\in \mathcal{C}^1([0,T])$, decomposing $J_{T,G}(a^p)$ as in \eqref{Decompo} we have:
\begin{equation}\label{Decompoo}
    \begin{split}
       J_{T,G}(a^p) &= a^p_T G_T -a^{p}_{2\delta}G_{2\delta} -\int_{2\delta}^T \partial_s G_s a^{p}_s ds - \int_{2\delta}^TA_G(a^{p})(s)ds\\
        &+ a^{p}_{2\delta}G_{2\delta}- a^{p}_{\delta}G_{\delta}
        -\int_{\delta}^{2\delta} \partial_s G_s a^{p}_s ds - \int_{\delta}^{2\delta}A_G(a^{p})(s)ds\\
    &+a^{p}_{\delta}G_{\delta}- a^{p}_0 G_0 - \int_{0}^{\delta} \partial_s G_s a^{p}_s ds - \int_{0}^{\delta}A_G(a^{p})(s)ds.
    \end{split}
\end{equation}
Again, the last term is negative. The first term in \eqref{Decompoo} is given by 
$$\int_0^1\left[\tau_{\frac{s}{p}}a(T )G_T - \tau_{\frac{s}{p}}a (2\delta )G_{2\delta} - \int_{2\delta}^T\partial_s G_s \tau_{\frac{s}{p}}a (t)dt +\int_{2\delta}^T J_G(\tau_{\frac{s}{p}}a )(t)dt \right]\phi(s)ds $$
where $\tau_{\frac{s}{p}}a(t) = a\left(t + \frac{s}{p} \right)$. Bound this by $\int_0^1 I_T(\tau_{\frac{s}{p}}a)\phi(s)ds$. Using that $a$ solves \eqref{hydordyn} on $[T,T+1]$, this is less than $\int_0^1 I_T(a)\phi(s)ds = I_T(a)$. 

Now we deal with the second term in \eqref{Decompoo}. By regularity of $a^p$ we can perform the following integration by part
$$a^{p}_{2\delta}G_{2\delta}- a^{p}_{\delta}G_{\delta}
        -\int_{\delta}^{2\delta} \partial_t G_t a^{p}_t dt =  \int_{\delta}^{2\delta}\partial_t a_s^p G_s ds.   $$
Therefore,
\begin{equation*}
        \begin{split}
       &a^{p}_{2\delta}G_{2\delta}- a^{p}_{\delta}G_{\delta}
        -\int_{\delta}^{2\delta} \partial_t G_t a^{p}_t dt - \int_{\delta}^{2\delta}A_G(a^p)(s)ds= \int_{\delta}^{2\delta}\partial_s a_s^p G_s ds - \int_{\delta}^{2\delta}A_G(a^p)(s)ds.
    \end{split}
\end{equation*}
Using Proposition \ref{Inversion} and the fact that $a^p$ is smooth on $[\delta,2\delta]$ and uniformly bounded away from $0$ and $1$, we have that
\begin{equation}\label{HHH}
    \underset{\Tilde{G}\in \mathcal{C}^1([0,T])}{\sup}\int_{\delta}^{2\delta}\partial_s a_s^p \Tilde{G}_s ds - \int_{\delta}^{2\delta}A_{\Tilde{G}}(a^p)(s)ds = \int_{\delta}^{2\delta}\partial_s a_s^p H_s ds - \int_{\delta}^{2\delta}A_H(a^p)(s)ds
\end{equation}
where
$$H(t)=\log\left(\frac{(a^p)'(t) + \sqrt{(a^p)'(t)^2 + 16\gamma(1-\gamma)(1-a^p(t))a^p(t)} }{4\gamma(1-a^p(t))} ~ \right). $$
Since $a$ solves the ODE \eqref{EDPmasse} on $[0,3\delta]$,
\begin{equation*}
    \begin{split}
        \partial_t a_t^p &= \int_{\R} \partial_t a(t+s) \psi(\varepsilon_p(t),s) ds + \int_{\R}a(t+s)\partial_t \psi(\varepsilon_p(t),s) ds = -2 \int_{\R} a(t+s) \psi(\varepsilon_p(t),s) ds  + 2\gamma + r^p(t)
        \end{split}
\end{equation*}
with
$$r^p(t) := \int_{\R}a(t+s)\partial_t\psi(\varepsilon_p(t),s)ds. $$
Hence, 
\begin{equation}\label{r}
    \partial_t a_t^p = -2 a_t^p + 2\gamma +r^p(t),
\end{equation}
and the right hand side term in \eqref{HHH} equals
\begin{equation*}
    \begin{split}
       \int_{\delta}^{2\delta}\left(-2a^p(s) +2\gamma\right)H(s)ds + \int_{\delta}^{2\delta} r^p(s)H(s)ds - \int_{\delta}^{2\delta}A_H(a^p)(s)ds.
    \end{split}
\end{equation*}
Therefore, the second term in \eqref{Decompoo} is less than
 \begin{equation}
     \begin{split}
          \int_{\delta}^{2\delta}\left(-2a^p(s) +2\gamma\right)H(s)ds + \int_{\delta}^{2\delta} r^p(s)H(s)ds - \int_{\delta}^{2\delta}A_H(a^p)(s)ds. 
     \end{split}
 \end{equation}
Now, 
$$\int_{\delta}^{2\delta}\left(-2a^p(s) +2\gamma\right)H(s)ds  - \int_{\delta}^{2\delta}A_H(a^p)(s)ds\leq 0.$$
Indeed, for any $G\in \mathcal{C}^1([0,T]))$ and $a\in C([0,T])$ with values in $[0,1]$, for any $s\in [0,T]$,
\[ A_G(a)(s) +2(a-\gamma)G(s) \geq 0.  \]
To see that, for $s\in [0,T]$, write $A_G(a)(s) +2(a_s-\gamma)G_s$ as $f_{a_s}(G_s)$, where $f_a$ is a convex function achieving its minimum at $0$ where it is vanishes.

To conclude, we show that $\underset{p\rightarrow \infty}{\lim}\int_{\delta}^{2\delta}r^p(s)H(s)ds = 0$. Recall that $\int_0^1\phi(s)ds =1$, so $\int_{\R} \partial_t\psi(\varepsilon_p (t),s)ds = 0.$
We then have
$$r^p(t) = \int_{\R}\left[a(t+s)-a(t) \right]\partial_t\psi(\varepsilon_p(t),s)ds $$
 and as $a$ is smooth, so Lipschitz on $[\delta,2\delta]$, there is a $C(\delta)>0$ such that
$|a(t+s) - a(t)|\leq C(\delta)s$. Following the same lines as in the proof of Lemma 5.6 in \cite{Tsunoda-Landim}, we prove that $r^p(t) \leq \frac{C(\delta,\phi)}{p}$ which yields the desired result.
\end{proof}
\textit{Proof of Theorem \ref{I-dense}.} By Lemma \ref{PI3}, it is enough to check that $\Pi_3^T \subset B^T$. For that, fix $a\in \Pi_3^T$ and $t\in (0,T]$, and define
$$H(t) = \log\left(\frac{a'(t) + \sqrt{a'(t)^2 + 16\gamma(1-\gamma)(1-a(t))a(t)} }{4\gamma(1-a(t))} ~ \right). $$
This is well defined because $a$ is in $\Pi_2^T$. Since $a\in \mathcal{C}^2([0,T])$, $H$ is in $\mathcal{C}^1([0,T])$ and by construction of $H$, it is related to $a$ by \eqref{EDP}. Therefore $a$ belongs to $B^T$.

\subsection{Proof of Lemma \ref{Identific}}
 The proof of Lemma \ref{Identific} relies on an argument introduced in \cite{Bertini}. Let us recall some notation. The quasi potential $V:[0,1] \rightarrow [0,+ \infty]$ relatively to $I_T(~.~|\gamma)$ is defined by
\begin{equation}
    V(m) := \underset{T>0}{\inf} ~ ~ \underset{a(.),~ a(T) = m}{\inf}~ I_T\left(a|\gamma\right),
\end{equation}
where the second infimum is taken over elements $a$ of $\mathcal{C}^1([0,T])$. The aim is to show that $V=S$, where we recall that $S$ was defined in \eqref{EntropieS}. 

To prove that $V(m)\leq S(m)$, one exhibits a path $a$ connecting $\gamma$ to $m$ in $[0,T]$ such that $I_T(a|\gamma) \leq S(m)$. Recall the variational definition of $I_T$ given in Definition \ref{defGD}. To prove that $V(m)\geq S(m)$, we show that for any path $a$ connecting $\gamma$, at time $0$, to $m$ at time $T$, $I_T(a)\geq S(m)$ from which the inequality follows.

First we prove the following result:
\begin{lem} \label{ControlI1}
    For $\kappa \in (0,1)$, let $a_{\kappa}:[0,1] \rightarrow [0,1]$ be given by $a_{\kappa}(t) = \kappa t + \gamma$. Then,
    $$I_1(a_{\kappa}|\gamma) \underset{\kappa \rightarrow 0}{\longrightarrow} 0.$$
\end{lem}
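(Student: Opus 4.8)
Since $a_\kappa(0)=\gamma$ we have $I_1(a_\kappa|\gamma)=I_1(a_\kappa)$, so it suffices to show $I_1(a_\kappa)\to 0$ as $\kappa\to 0$. The plan is to apply Proposition \ref{Inversion} to compute $I_1(a_\kappa)$ exactly through an associated solution $H_\kappa$ of the ordinary differential equation \eqref{EDP}, and then to observe that this solution — and hence the value $I_1(a_\kappa)$ — vanishes in the limit.

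First, $a_\kappa\in\mathcal{C}^\infty([0,1])$ with $\partial_t a_\kappa\equiv\kappa$, and for $\kappa<1-\gamma$ one has $a_\kappa(t)\in[\gamma,\gamma+\kappa]\subset(0,1)$ for all $t\in[0,1]$. Solving \eqref{EDP} for $e^{H(t)}$ as a quadratic equation (with $\partial_t a_\kappa(t)=\kappa$), the positive root is
\[
H_\kappa(t)\;=\;\log\!\left(\frac{\kappa+\sqrt{\kappa^2+16\gamma(1-\gamma)\,a_\kappa(t)\big(1-a_\kappa(t)\big)}}{4\gamma\big(1-a_\kappa(t)\big)}\right).
\]
Since $a_\kappa$ is smooth and bounded away from $0$ and $1$ on $[0,1]$, the function $H_\kappa$ is well defined, belongs to $\mathcal{C}^1([0,1])$, and by construction the pair $(a_\kappa,H_\kappa)$ satisfies \eqref{EDP}. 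Hence Proposition \ref{Inversion} gives $I_1(a_\kappa)=J_{1,H_\kappa}(a_\kappa)$.

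Performing the integration by parts used in the proof of Proposition \ref{Inversion} (legitimate because $a_\kappa\in\mathcal{C}^1$) and using $\partial_t a_\kappa\equiv\kappa$,
\[
I_1(a_\kappa)\;=\;J_{1,H_\kappa}(a_\kappa)\;=\;\kappa\int_0^1 H_\kappa(s)\,ds\;-\;\int_0^1 A_{H_\kappa}(a_\kappa)(s)\,ds ,
\]
with $A_G$ as in \eqref{AG}. As $\kappa\to 0$, $a_\kappa(t)\to\gamma$ uniformly on $[0,1]$, so $H_\kappa(t)\to\log\!\big(\sqrt{16\gamma^2(1-\gamma)^2}/(4\gamma(1-\gamma))\big)=0$ uniformly; in particular $(H_\kappa)_{\kappa<1-\gamma}$ is uniformly bounded. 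Therefore $\kappa\int_0^1 H_\kappa(s)\,ds\to 0$, and since $A_{H_\kappa}(a_\kappa)(s)=2\gamma(1-a_\kappa(s))(e^{H_\kappa(s)}-1)+2(1-\gamma)a_\kappa(s)(e^{-H_\kappa(s)}-1)\to 0$ uniformly in $s$, also $\int_0^1 A_{H_\kappa}(a_\kappa)(s)\,ds\to 0$. Hence $I_1(a_\kappa)\to 0$.

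This is a short computational lemma; rather than an obstacle, the one point requiring a little care is the verification that the explicit $H_\kappa$ above is an admissible test function (i.e.\ genuinely $\mathcal{C}^1$ on $[0,1]$, which follows from $a_\kappa$ being smooth and staying in a compact subset of $(0,1)$), after which the conclusion reduces to the elementary uniform-convergence estimate above.
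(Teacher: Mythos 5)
Your proof is correct and follows essentially the same route as the paper: both compute $I_1(a_\kappa)=J_{1,H_\kappa}(a_\kappa)$ via Proposition \ref{Inversion} with the same explicit $H_\kappa$ obtained from the quadratic equation for $e^{H}$. Your final limiting step (uniform convergence of $H_\kappa$ to $0$ by continuity of the formula at $(\kappa,a)=(0,\gamma)$) is in fact a cleaner justification than the paper's decomposition of the integral combined with dominated convergence, but it is a minor variation rather than a different argument.
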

\begin{proof}
    Choose $\kappa \in [0,1)$ so that $t\mapsto \kappa t + \gamma$ is an element of $\Pi_3^1$ (that is, uniformly bounded away from $0$ and $1$ and in $\mathcal{C}^2$). Then,
    $I_1(\kappa t + \gamma|\gamma) = J_{1,H_{\kappa}}(\kappa t + \gamma), $  with
    $$H_{\kappa}(t) = \log \left(\frac{\kappa + \sqrt{\kappa^2 + 16\gamma(1-\gamma)a(t)(1-a(t)) } }{4\gamma (1-a(t))} \right), $$
where $a(t) = \kappa t + \gamma$. Therefore,
\begin{equation*}
    \begin{split}
        I_1(\kappa t + \gamma|\gamma)
        &= \kappa\int_0^1H_{\kappa}(s)ds - \int_0^1\big[2\gamma(1-a(s))(e^{H_{\kappa}(s)}-1)+2(1-\gamma)a(s)(e^{-H_{\kappa}(s)}-1) \big]ds\\
        &= \kappa\int_0^1H_{\kappa}(s)ds - \int_0^1\big[2\gamma(1-\kappa s)(e^{H_{\kappa}(s)}-1)+2(1-\gamma)\kappa s(e^{-H_{\kappa}(s)}-1) \big]ds\\
        & - \int_0^1\big[2\gamma(1-\gamma)(e^{H_{\kappa}(s)}-1)+2(1-\gamma)\gamma(e^{-H_{\kappa}(s)}-1) \big]ds.
    \end{split}
\end{equation*}
The last term is less than $I_1(\gamma|\gamma)=0$. Using that $H_{\kappa}$ converges weakly to zero, that it is uniformly bounded in $\kappa$ and $t\in [0,t]$ and the dominated convergence theorem, the above converges to zero.  
\end{proof}
Let us now prove Lemma \ref{Identific}.
\begin{proof}
Let us first prove that $V(m)\leq S(m)$. Consider $m\in (0,1)$ and fix $0<\varepsilon<1$. Denote by $a^*$ the unique solution of the Cauchy problem: $\partial_t a^* =-2 a^* +  2\gamma $, $a^*(0)=m$. It is immediate to see that there is $T_1>0$ such that for any $t\geq T_1$, $|a^*(t)-\gamma|<\varepsilon$. Now, consider the following trajectory $m^*$ defined on $[0,T_1+1]$ by\begin{equation} 
   \left\{
    \begin{array}{ll}
        a^*(T_1)t + \gamma(1-t),~ \text{if}~ t\in [0,1]\\\\
        a^*(T_1+1-t),~ ~\text{if}~ t\in [1,T_1+1].
        \end{array}
\right.
\end{equation}
By definition, $V(m) \leq I_{T_1+1}(m^*|\gamma)$, and by \eqref{DecompoI},
\begin{equation}\label{shift}
\begin{split}
        I_{T_1+1}(m^*|\gamma) &= I_1(m^*|\gamma)+ I_{T_1}(m^*(.+1))\\ &=I_1(m^*|\gamma) + I_{T_1}(a^*(T_1-.)).
\end{split}
\end{equation}
Let us compute the second term in \eqref{shift} . Denote by $a(s)=a^*(T_1-s)$ for $s\in [0,T_1]$. Then $a$ satisfies the Cauchy problem $\partial_sa = 2a-2\gamma$, $a(0)=a^*(T_1)$, and one can check that $H$, as defined in \eqref{H} associated to $a$ is given by:
\begin{equation}
    H(t)= \log \left(\frac{\left(1-\gamma\right)a(t)}{\gamma\left(1-a(t)\right)} \right)=:\log \left(\frac{v(t)}{w(t)} \right).
\end{equation}
We claim that 
\begin{equation}\label{final}
    J_{T_1,H}(a^*(T_1-.))= m\log\left( \frac{(1-\gamma)m}{\gamma(1-m)}\right)- a^*(T_1)\log\left( \frac{(1-\gamma)a^*(T_1)}{\gamma(1-a^*(T_1))}\right) + \log\left( \frac{1-m}{1-\gamma}\right).
\end{equation}
The proof relies on a long but straightforward computation detailed in Appendix \ref{Appendice}.
Collecting \eqref{shift} and \eqref{final} we have
$$V(m) \leq I_1\big((a^*(T_1)-\gamma)t+ \gamma|\gamma\big) + m\log\left( \frac{(1-\gamma)m}{\gamma(1-m)}\right)- a^*(T_1)\log\left( \frac{(1-\gamma)a^*(T_1)}{\gamma(1-a^*(T_1))}\right) + \log\left( \frac{1-m}{1-\gamma}\right). $$
Take $\varepsilon \rightarrow 0$ and $T_1 \rightarrow \infty$ so that $a^*(T_1) \rightarrow \gamma$. By Lemma \ref{Identific}, the first term on the right hand side of \eqref{final} goes to zero. The rest converges to $S(m)$ so $V(m) \leq S(m). $

Now, we prove that $V(m)\geq S(m)$. It is enough to show that for any $T>0$, for any $a\in \mathcal{D}_{[0,1]}^T$ such that $a(0)=\gamma$ and $a(T) = m$, $I_T(a|\gamma) \geq S(m)$. Fix $T>0$, suppose that $a$ is in $\Pi_3^T$ and define, for $t\in [0,T]$, $H(t) = \log \left(\frac{(1-\gamma)a(t)}{\gamma(1-a(t))} \right)$. By definition, $I_T(a|\gamma) \geq J_{T,H}(a)$. Now, let us compute $J_{T,H}(a)$:
\begin{equation*}
    J_{T,H}(a)= mH_T - \gamma H_0 - \int_0^T a_s\partial_s H_sds - \int_0^T A_H(a)(s)ds.
\end{equation*}
We have $\partial_s H_s = \frac{\partial_s a_s}{a_s(1-a_s)}$, so 
$- \int_0^T a_s\partial_s H_sds = \left[ \log(1-a_s)\right]_0^T. $
Furthermore, 
\begin{equation*}
    \begin{split}
        A_H(a)(s) &= 2\gamma \left[\frac{(1-\gamma)a_s}{\gamma(1-a_s)} + \frac{\gamma(1-a_s)}{(1-\gamma)a_s} \right]a_s + 2(1-2\gamma)a_s -2\gamma\frac{(1-a_s)a_s}{(1-\gamma)a_s}a_s - 2\frac{(a_s-\gamma)}{1-a_s}\\
        &= 2(1-\gamma)\frac{a^2_s}{1-a_s}-2\gamma(1-a_s) + 2a_s - 4\gamma a_s + \frac{2\gamma}{1-a_s}-\frac{2a_s}{1-a_s}=0.
    \end{split}
\end{equation*}
We are left with
\begin{equation*}
    J_{T,H}(a|\gamma)= mH_T - \gamma H_0 +\left[ \log(1-a_s)\right]_0^T = S(m).    
\end{equation*}
To extend this fact for any trajectory $a\in \mathcal{D}_{[0,1]}^T$ such that $a(0)=\gamma$ and $a(T)=m$, use the $I_T$ -- density of $\Pi_3^T$  in $\mathcal{D}_{[0,1]}^T$. Indeed, 
if $a\in \mathcal{D}_{[0,1]}^T$, using the approximation $a^{\delta}$ from the proof of Lemma \ref{Pi1} of $a$ and $a^{\delta,\varepsilon}$ the approximation of each $a^{\delta}$ from the proof or Lemma \ref{PI2}, we have
$$\underset{\varepsilon, \delta \rightarrow 0}{\underline{\lim}}~ I_T(a^{\delta,\varepsilon}|\gamma) = I_T(a|\gamma) \geq S(m). $$
\end{proof}

\section{Dynamical large deviations principle}
In this section, we prove the dynamical large deviations principle (Theorem \ref{DynamicGD}) following the approach in \cite{QRV}, or \cite{BLM}. The steps are by now standard. For the upper bound, we use an exponential martingale and, for the lower bound, we perturb the dynamics to turn typical a trajectory.
\subsection{Large deviations upper bound}
We first prove the upper bound for compact sets.
For that, we use an exponential martingale as well as the superexponential replacement lemma (Lemma \ref{RL}). To extend the result to closed sets, we prove exponential tightness of the process (Proposition \ref{expotight}).
\subsubsection{Upper bound for closed compact sets}

For $G\in \mathcal{C}^1([0,T])$ consider, for $t\in [0,T]$, $F(t,\e_t) = N\langle\pi_t^N,G_t\rangle  = N\widehat{m}\left( \pi_t^N\right) G_t$ and
$$M_t(G) = \exp\left\{F(t,\e_t)-F(0,\e_0)-\int_0^te^{-F(s,\e_s)}(\partial_s+L)e^{F(s,\e_s)}ds \right\}. $$
Then $(M_t(G))_{0\leq t \leq T} $ is an exponential martingale of mean $1$ with respect to the natural filtration and a computation yields:
\begin{equation}\label{Grossemg}
    \begin{split}
        M_t(G) &= \exp\left\{N \left(\widehat{m}\left( \pi_t^N\right)G_t - \widehat{m}\left( \pi_0^N\right) G_0 - \int_0^t \partial_s G_s \widehat{m}\left( \pi_s^N\right) ds\right.\right.\\
        &\left.\left.-\int_0^t \sum_{x\in \{1,N-1\}}\left[r_x(1-\e_s(x))(e^{G_s}-1) + (1-r_x)\e_s(x) (e^{-G_s}-1)\right]ds \right)\right\}.
    \end{split}
\end{equation}
Fix $m_0\in (0,1)$ and consider a sequence of configurations $(\e^N)_{N\geq 1}$ such that $\widehat{m}(\pi^N(\e^N))_{N\geq 1}$ converges to $m_0$. For $\delta >0$, introduce the following event:
\begin{equation}\label{petit}
    \begin{split}
        &S_{\delta}^{G} = \Big\{(\e_s)_{0\leq s\leq T} \in \mathcal{D}_{\Omega_N}^T,~  \Big|\int_0^T \sum_{x\in \{1,N-1\}}\left[r_x(1-\e_s(x))(e^{G_s}-1) + (1-r_x)\e_s(x)(e^{-G_s}-1)\right]ds\\
        &- \int_0^T\big[2\gamma\big(1-\widehat{m}(\pi_s^N)\big) \big(e^{G_s}-1\big) + 2\widehat{m}(\pi_s^N)(1-\gamma)\big(e^{-G_s}-1\big)\big]ds  \Big|<\delta  \Big\}.
    \end{split}
\end{equation}
Let $\mathcal{F}$ be a closed subset of $ \mathcal{D}_{[0,1]}^T$. Introduce
$$\mathcal{H}_{m_0,\delta} = \Big\{a\in \mathcal{D}_{[0,1]}^T, |a(0)-m_0|<\delta \Big\}. $$
Using inequality \eqref{loginequ}, we have
\begin{equation*}
    \begin{split}
        &\lims \frac{1}{N}\log~ \Tilde{\mathbb{P}}_{\delta_{\e^N}}\left[\left(\widehat{m}\left( \pi_t^N\right)\right)_{0\leq t \leq T}\in \mathcal{F}\cap \mathcal{H}_{m_0,\delta}\right]\\
        &\leq \max \left(~ \lims~  \frac{1}{N}\log~ \Tilde{\mathbb{P}}_{\delta_{\e^N}}\left[\left(\widehat{m}\left( \pi_t^N\right)\in \mathcal{F}\cap \mathcal{H}_{m_0,\delta}\right)\cap \left(\pi_t^N\in S_{\delta}^{G} \right)\right]\right.,\left. \lims~  \frac{1}{N}\log~ \Tilde{\mathbb{P}}_{\delta_{\e^N}}\left[\pi_t^N\in \big(S_{\delta}^{G}\big)^c  \right]\right),
    \end{split}
\end{equation*} where, from now on, we forget the subscript $0\leq t \leq T$ in $\left(\widehat{m}\left( \pi_t^N\right)\right)_{0\leq t \leq T} $ and $(\pi_t^N)_{0\leq t \leq T}$ in order to lighten the notation. By Lemma \ref{RL}, the second limit is $-\infty$. Now, writing
$$\Tilde{\mathbb{P}}_{\delta_{\e^N}}\left[\left(\widehat{m}(\pi_t^N)\in \mathcal{F}\cap \mathcal{H}_{m_0,\delta}\right)\cap \left(\pi_t^N\in S_{\delta}^{G} \right)\right]= \Tilde{\mathbb{E}}_{\delta_{\e^N}}\left[\mathds{1}_{\left(\widehat{m}(\pi_t^N)\in \mathcal{F}\cap \mathcal{H}_{m_0,\delta}\right)\cap \left(\pi_t^N\in S_{\delta}^{G} \right)} M_T^G (M_T^G)^{-1} \right], $$
using the fact that $M_T^G$ is a martingale with mean $1$ and upper bounding $(M_T^G)^{-1}$ yields that this is less than
\begin{equation}
    \begin{split}
        &\underset{\pi\in S_{\delta}^{G}\cap m^{-1}(\mathcal{F})  }{\sup}~\exp\left\{-N \left(\widehat{m}(\pi_T^N)G_T - \widehat{m}(\pi_0^N)G_0 - \int_0^T \partial_s G_s \widehat{m}(\pi_s^N)ds \right.\right.\\
        &-\left.\left.\int_0^T \big[2\gamma\big(1-\widehat{m}(\pi_s^N)\big) \big(e^{G_s}-1\big) + 2\widehat{m}(\pi_s^N)(1-\gamma)\big(e^{-G_s}-1\big)\big]ds \right)\right\} \exp(N\delta)
    \end{split}
\end{equation}
where we used the definition of the event $S_{\delta}^{G}$. Therefore, for any $G\in \mathcal{C}^1([0,T])$,
\begin{equation}
    \begin{split}
        \lims \frac{1}{N}\log~ \Tilde{\mathbb{P}}_{\delta_{\e^N}}\left[\left(\widehat{m}(\pi_t^N)\right)_{0\leq t\leq T}\in \mathcal{F}\cap \mathcal{H}_{m_0,\delta} \right] &\leq - \underset{a \in \mathcal{F}\cap \mathcal{H}_{m_0,\delta}}{\inf} \Big\{a_TG_T - a_0G_0 - \int_0^T \partial_s G_s a_s ds \\
        &-\int_0^T \big[2\gamma\big(1-a_s\big) \big(e^{G_s}-1\big) + 2a_s (1-\gamma)\big(e^{-G_s}-1\big)\big]ds \Big\} + \delta\\
        &\leq  -\underset{a \in \mathcal{F}\cap \mathcal{H}_{m_0,\delta}}{\inf}J_G(a) + \delta.
    \end{split}
\end{equation} Now, as $\widehat{m}(\pi^N(\e^N))_{N\geq 1}$ converges to $m_0$,
$$\lims \frac{1}{N}\log~ \Tilde{\mathbb{P}}_{\delta_{\e^N}}\left[\left(\widehat{m}\left( \pi_t^N\right)\right)_{0\leq t \leq T}\in \mathcal{F}\cap \mathcal{H}_{m_0,\delta}^c\right]=-\infty,$$
so we are left with
\begin{equation*}
    \lims \frac{1}{N}\log~ \Tilde{\mathbb{P}}_{\delta_{\e^N}}\left[\left(\widehat{m}\left( \pi_t^N\right)\right)_{0\leq t \leq T}\in \mathcal{F} \right] \leq   -\underset{a \in \mathcal{F}\cap \mathcal{H}_{m_0,\delta}}{\inf}J_G(a) + \delta
\end{equation*}
and taking $\delta \rightarrow 0$,
\begin{equation*}
    \lims \frac{1}{N}\log~ \Tilde{\mathbb{P}}_{\delta_{\e^N}}\left[\left(\widehat{m}\left( \pi_t^N\right)\right)_{0\leq t \leq T}\in \mathcal{F} \right] \leq   -\underset{a \in \mathcal{F}_{m_0}}{\inf}J_G(a) ,
\end{equation*}
where $\mathcal{F}_{m_0}$ is the set of elements of $\mathcal{F}$ with initial data $m_0$.
Optimizing this inequality over $G$ yields
\begin{equation}
    \begin{split}
        \lims \frac{1}{N}\log~ \Tilde{\mathbb{P}}_{\delta_{\e^N}}\left[\left(\widehat{m}(\pi_t^N)\right)_{0\leq t\leq T}\in \mathcal{F} \right] &\leq - \underset{G \in \mathcal{C}^1([0,T])}{\sup}~ \underset{a \in \mathcal{F}_{m_0}}{\inf}J_{T,G}\left(a\right).
    \end{split}
\end{equation}
If $\mathcal{F}$ is a compact subset of $\mathcal{D}_{\mathcal{M}}^T$, so is $\mathcal{F}_{m_0}$ and one can exchange the supremum and infimum. Indeed, at any fixed $G\in \mathcal{C}^1([0,T])$, $a \mapsto J_G(a)$ is a linear and continuous function so one can apply Varadhan's argument (as done in \cite{KL}). In that case we get
\begin{equation}\label{GD}
    \begin{split}
        \lims \frac{1}{N}\log~ \Tilde{\mathbb{P}}_{\delta_{\e^N}}\left[\left(\widehat{m}(\pi_t^N)\right)_{0\leq t\leq T}\in \mathcal{F} \right]  &\leq - \underset{a \in \mathcal{F}_{m_0} }{\inf}~ I_T\left(a\right) = - \underset{a \in \mathcal{F} }{\inf}~ I_T\left(a|m_0\right).
    \end{split}
\end{equation}
\subsubsection{Upper bound for closed sets}
In the previous subsection, we have established the large deviations upper bound for compact sets. To extend the result to closed sets we use the standard method presented in \cite{KL}, Chapter 10, and based on the so called exponential tightness of the process, stated in the following Proposition. 
\begin{prop} \label{expotight}
    For every $\ell \in \N$, there exists a compact set $K_{\ell}\in \mathcal{D}_{[0,1]}^T$ such that 
    \begin{equation}\label{Compact}
        \lims \frac{1}{N}\log ~ \Tilde{\mathbb{P}}_{\delta_{\e^N}}\left[\left(\widehat{m}(\pi_t^N)\right)_{0\leq t\leq T}\in K_{\ell}^c \right]\leq -\ell.
    \end{equation}
\end{prop}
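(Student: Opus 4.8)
The plan is to obtain the exponential tightness from the exponential martingale \eqref{Grossemg} already at hand, following the standard scheme of \cite[Chapter 10]{KL}. Since the state space $[0,1]$ is compact, it suffices to prove the modulus-of-continuity estimate
\begin{equation*}
    \underset{\delta\rightarrow 0}{\lim}~ \lims~ \frac{1}{N}\log~ \Tilde{\mathbb{P}}_{\delta_{\e^N}}\left[\underset{\substack{0\leq s,t\leq T\\ |t-s|\leq \delta}}{\sup}~ \big|\widehat{m}(\pi_t^N) - \widehat{m}(\pi_s^N)\big| > \varepsilon\right] \;=\; -\infty \qquad\text{for every } \varepsilon>0.
\end{equation*}
From this, the compact sets $K_\ell$ are produced by the usual recipe: choosing $\delta_k>0$ so small that the above $\limsup$ with $\varepsilon=1/k$, $\delta=\delta_k$ is at most $-(\ell+k)$, one takes $K_\ell:=\bigcap_{k\geq 1}\{a\in\mathcal{D}_{[0,1]}^T:\ \sup_{|t-s|\leq\delta_k}|a(t)-a(s)|\leq 1/k\}$, which is closed and, by Arzel\`a--Ascoli, compact in $\mathcal{C}([0,T])\subset\mathcal{D}_{[0,1]}^T$ since its elements take values in the compact set $[0,1]$ and share a common modulus of continuity; a union bound over $k$ then yields \eqref{Compact}.

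For the modulus estimate I would specialize the martingale $M_t(G)$ of \eqref{Grossemg} to a constant $G_s\equiv\lambda$, $\lambda>0$, which gives a positive mean-one $(\mathcal{F}_t)$-martingale $M_t(\lambda)=\exp\{N\lambda(\widehat{m}(\pi_t^N)-\widehat{m}(\pi_0^N))-N\int_0^t\Phi_\lambda(\e_s)\,ds\}$, where $\Phi_\lambda$ is the integrand appearing in \eqref{Grossemg} for this choice of $G$; since the occupation variables lie in $\{0,1\}$ and $0<\alpha,\beta<1$, one has $|\Phi_\lambda|\leq C_\lambda$ for a constant $C_\lambda$ independent of $N$. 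Partitioning $[0,T]$ into $\lceil T/\delta\rceil$ consecutive intervals $[t_i,t_{i+1}]$ of length $\delta$ and using that, conditionally on $\mathcal{F}_{t_i}$, the ratio $t\mapsto M_t(\lambda)/M_{t_i}(\lambda)$ is again a positive mean-one martingale, Doob's maximal inequality gives
\begin{equation*}
    \Tilde{\mathbb{P}}_{\delta_{\e^N}}\Big[\underset{t\in[t_i,t_{i+1}]}{\sup}\big(\widehat{m}(\pi_t^N)-\widehat{m}(\pi_{t_i}^N)\big)>\tfrac{\varepsilon}{3}\Big]\;\leq\; \exp\big\{-N\big(\tfrac{\lambda\varepsilon}{3}-C_\lambda\delta\big)\big\},
\end{equation*}
and the same bound for the opposite sign upon taking $G_s\equiv-\lambda$. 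Because any $s,t$ with $|t-s|\leq\delta$ lie in two consecutive cells, $\sup_{|t-s|\leq\delta}|\widehat{m}(\pi_t^N)-\widehat{m}(\pi_s^N)|\leq 3\max_i\sup_{t\in[t_i,t_{i+1}]}|\widehat{m}(\pi_t^N)-\widehat{m}(\pi_{t_i}^N)|$, so a union bound over the $\lceil T/\delta\rceil$ cells, together with $N^{-1}\log(T/\delta)\to 0$, yields $\lims N^{-1}\log\Tilde{\mathbb{P}}_{\delta_{\e^N}}[\sup_{|t-s|\leq\delta}|\widehat{m}(\pi_t^N)-\widehat{m}(\pi_s^N)|>\varepsilon]\leq -\tfrac{\lambda\varepsilon}{3}+C_\lambda\delta$. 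Letting first $\delta\to0$ and then $\lambda\to\infty$ closes the argument.

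The estimate itself is short; the structural point I would be careful about is that the corrector $N\int_{t_i}^t\Phi_\lambda(\e_s)\,ds$ is only of order $C_\lambda\delta$ because the interval has length $\delta$, so it is essential to apply Doob's inequality on short subintervals rather than on all of $[0,T]$ (on $[0,T]$ one only gets a finite, $\varepsilon$-dependent rate, not $-\infty$). The remaining delicate point is the routine bookkeeping in turning the modulus estimate into the compact sets $K_\ell$ — the order of the limits $N\to\infty$, $\delta\to0$, $\lambda\to\infty$ and the diagonal choice of $(\delta_k)$ — which is handled exactly as in \cite[Chapter 10]{KL}. An alternative that avoids \eqref{Grossemg} would use the decomposition of $\widehat{m}(\pi_t^N)$ in Lemma \ref{tightness} (a $2$-Lipschitz drift plus a jump martingale with jumps of size $1/N$) and bound the oscillation over $[s,t]$ by the number of boundary jumps in that interval, which is stochastically dominated by a Poisson variable of parameter $O(N\delta)$ whose super-exponential tail gives the same conclusion.
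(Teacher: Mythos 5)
Your proof is correct and follows exactly the route the paper takes: reduce Proposition \ref{expotight} to the super-exponential modulus-of-continuity estimate (the paper's Lemma \ref{AZ}) and then build the compacts $K_\ell$ via a common modulus of continuity and Arzel\`a--Ascoli. The paper itself only states Lemma \ref{AZ} and refers to \cite{KL} and \cite{FN} both for its proof and for the derivation of the Proposition from it, so your exponential-martingale/Doob argument on subintervals of length $\delta$ (with the correct observation that the corrector is only $O(C_\lambda\delta)$ there) supplies precisely the details the paper delegates to those references.
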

To build a sequence of compact sets satisfying Proposition \ref{expotight}, one proves the following estimate:
\begin{lem}\label{AZ}
For $\varepsilon,\delta>0$, introduce the set
$$\mathcal{C}_{\delta,\varepsilon} = \{ a\in \mathcal{D}_{[0,1]}^T,~ \underset{s\leq t \leq s+\delta}{\sup}  |a_t-a_s|\leq \varepsilon \}. $$
For any $\varepsilon >0$,
\begin{equation}
    \underset{\delta \downarrow 0}{\lim}~  \lims~  \frac{1}{N}\log~ \Tilde{\mathbb{P}}_{\delta_{\e^N}}\left[\widehat{m}(\pi_t^N)\notin \mathcal{C}_{\delta,\varepsilon} \right]=-\infty.
\end{equation}
\end{lem}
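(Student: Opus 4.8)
The plan is to reduce the oscillation of the mass trajectory on $[0,T]$ to its oscillation over a short time window, and then to control the latter by the exponential martingale \eqref{Grossemg} evaluated at a \emph{constant} test function, together with Doob's maximal inequality.

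\emph{First}, I would carry out a deterministic reduction followed by the Markov property. Fix the grid $t_j=j\delta$, $0\le j\le\lceil T/\delta\rceil$. If $0\le s\le t\le T$ with $t-s\le\delta$, then $s$ and $t$ both lie in $[\,t_{j-1},(t_{j-1}+2\delta)\wedge T\,]$ for a suitable $j$, so $|a_t-a_s|\le 2\sup_{t_{j-1}\le r\le (t_{j-1}+2\delta)\wedge T}|a_r-a_{t_{j-1}}|$; hence $\{a\notin\mathcal{C}_{\delta,\varepsilon}\}$ is contained in the union over $j$ of the events $\{\sup_{t_{j-1}\le r\le (t_{j-1}+2\delta)\wedge T}|a_r-a_{t_{j-1}}|>\varepsilon/2\}$. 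A union bound together with the Markov property of $(\zeta_t)$ at the deterministic times $t_{j-1}$ gives
\begin{equation*}
\Tilde{\mathbb{P}}_{\delta_{\e^N}}\big[\widehat{m}(\pi_t^N)\notin\mathcal{C}_{\delta,\varepsilon}\big]\;\le\;\big(\lceil T/\delta\rceil+1\big)\,\sup_{\mu_N}\,\Tilde{\mathbb{P}}_{\mu_N}\Big[\sup_{0\le r\le 2\delta}\big|\widehat{m}(\pi_r^N)-\widehat{m}(\pi_0^N)\big|>\varepsilon/2\Big],
\end{equation*}
the supremum being taken over all probability measures $\mu_N$ on $\Omega_N$. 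Since the prefactor does not depend on $N$, it suffices to bound the right-hand probability uniformly in the initial law.

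\emph{Second}, I would invoke the exponential martingale. For $\theta\in\R$, taking $G\equiv\theta$ in \eqref{Grossemg} yields the mean-one, nonnegative martingale
\begin{equation*}
M_t(\theta)=\exp\Big\{N\theta\big(\widehat{m}(\pi_t^N)-\widehat{m}(\pi_0^N)\big)-N\int_0^t\Phi_\theta(\zeta_s)\,ds\Big\},
\end{equation*}
where $\Phi_\theta(\zeta)=\sum_{x\in\{1,N-1\}}\big[r_x(1-\zeta(x))(e^\theta-1)+(1-r_x)\zeta(x)(e^{-\theta}-1)\big]$ satisfies $|\Phi_\theta|\le 2(e^{|\theta|}-1)=:C_\theta$, a bound uniform in $N$ and in $\zeta$ because the boundary sum has only two terms. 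For $\theta>0$, on the event $\{\sup_{0\le t\le 2\delta}(\widehat{m}(\pi_t^N)-\widehat{m}(\pi_0^N))>\varepsilon/2\}$ one has $\sup_{0\le t\le 2\delta}M_t(\theta)\ge\exp\{N\theta\varepsilon/2-2N\delta C_\theta\}$, so by Doob's maximal inequality the probability $\sup_{\mu_N}\Tilde{\mathbb{P}}_{\mu_N}[\sup_{0\le t\le 2\delta}(\widehat{m}(\pi_t^N)-\widehat{m}(\pi_0^N))>\varepsilon/2]$ is at most $\exp\{-N\theta\varepsilon/2+2N\delta C_\theta\}$; the opposite oscillation is treated identically with $G\equiv-\theta$. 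Combining with the first step, for every $\theta>0$,
\begin{equation*}
\lims\,\frac1N\log\Tilde{\mathbb{P}}_{\delta_{\e^N}}\big[\widehat{m}(\pi_t^N)\notin\mathcal{C}_{\delta,\varepsilon}\big]\;\le\;-\frac{\theta\varepsilon}{2}+4\delta\big(e^{\theta}-1\big).
\end{equation*}

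\emph{Finally}, I would optimize in $\theta$: choosing $\theta=\log(1/\delta)$ makes the last term at most $4$ while the first term equals $-\tfrac{\varepsilon}{2}\log(1/\delta)$, so letting $\delta\downarrow0$ gives $-\infty$, which is the claim. I do not expect a serious obstacle here: the argument is essentially bookkeeping. The points that need care are making every estimate uniform over the starting configuration (so that the Markov reduction in the first step is legitimate), checking that $C_\theta$ is independent of $N$ (which uses that the interaction touches only two sites), and, most importantly, letting the tilting parameter $\theta$ grow like $\log(1/\delta)$ — this is exactly what turns the finite-$\theta$ estimate into a rate that diverges as $\delta\to0$.
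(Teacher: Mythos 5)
Your proof is correct. The paper does not actually prove Lemma \ref{AZ} — it delegates it to the references \cite{KL} and \cite{FN} — and your argument (grid reduction plus Markov property, exponential Chebyshev via the mean-one martingale $M_t(\theta)$ with a constant tilt, Doob's maximal inequality, and the choice $\theta=\log(1/\delta)$) is precisely the standard proof given there, with all the uniformity and bookkeeping points handled correctly.
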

We refer to \cite{KL}, or \cite{FN} for precise details on how to recover the large deviations upper bound for closed sets from Proposition \ref{expotight}, how to derive Proposition \ref{expotight} from Lemma \ref{AZ} and for a proof of Lemma \ref{AZ}.

\subsection{Large deviations lower bound}
To prove the lower bound in Theorem \ref{DynamicGD}, we follow the usual strategy which consists in finding a perturbation of the process under which a trajectory satisfying some regularity assumptions becomes typical. The large deviation functional then appears as the entropy of the measure induced by the perturbed process, relatively to the one induced by the initial process. To extend this to any trajectory, proceeding as in \cite{BLM}, we use an $I_T$-density argument.

\subsubsection{Hydrodynamic limit of a perturbation of the process}
Given $G\in \mathcal{C}^1([0,T])$,
consider the following generator:
\begin{equation}
    (\mathcal{L}_{N,b}^Gf)(\e) = \sum_{x\in \{1,N-1\}}\left[ e^{G(t)}r_x(1-\e(x)) + e^{-G(t)}\e(x)(1-r_x)\right]\left(f(\e^x)-f(\e)\right).
\end{equation}
We will write $G_t$ instead of $G(t)$. Denote  by $\{\overline{\e}_t^N,~ t\in [0,T]\}$ the Markov process with generator $N\overline{\mathcal{L}}_N$, where
$$\overline{\mathcal{L}}_N= N^2\mathcal{L}_{N,0} + \mathcal{L}_{N,b}^G.$$
Given $\mu_N$ a measure on $\Omega_N$, denote by $\overline{\mathbb{P}}_{\mu_N}^G$ the probability measure on $\mathcal{D}_{\Omega_N}^T$ induced by $(\overline{\e}_t^N)_{t\geq 0}$ when $\overline{\e}_0 \sim \mu_N$, and $\overline{\mathbb{E}}_{\mu_N}^G$ its associated expectation. In particular, for $\e^N\in \Omega_N$, $\overline{\mathbb{P}}_{\delta_{\e^N}}^G$ is the measure induced starting from $\e^N$. Denote by $\overline{\pi}_t^N$ the empirical measure associated to $\overline{\e}_t^N$ and $\widehat{m}(\overline{\pi}_t^N)$ its total mass.

\begin{thm} \label{LGNmassperturbed}(Hydrodynamic limit for the total mass of the perturbed process).
Fix $\overline{m}_0\in [0,1]$ and consider a sequence of measures $(\mu_N)_{N\geq 1}$ on $\Omega_N$ associated to the mass $\overline{m}_0$. Then, for any $t\in [0,T]$ and any $\delta> 0$,
    $$\underset{N\rightarrow \infty}{\lim}~ \overline{\mathbb{P}}_{\mu_N}^G\left[~ \Big|\widehat{m}(\pi_t^N) -\overline{m}(t)\Big|> \delta \right] =0, $$
 where $\overline{m}:[0,T] \rightarrow [0,1]$ is the unique solution of 
 \begin{equation} \label{EDPmasseperturbed}
   \left\{
    \begin{array}{ll}
        \partial_t \overline{m}(\pi)(t) = 2\gamma\big(1-\overline{m}(\pi)(t)\big)e^{G_t} - 2(1-\gamma)\overline{m}(\pi)(t)e^{-G_t}\\ 
        \overline{m}(0)=\overline{m}_0.
        \end{array}
\right.
\end{equation}
\end{thm}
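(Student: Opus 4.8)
The plan is to follow, essentially verbatim, the proof of Proposition~\ref{LGNmass}: the perturbation modifies only the boundary rates, and only by the bounded factors $e^{\pm G_t}$, leaving untouched the bulk dynamics, which carries the $N^3$ speed-up responsible for the super-exponential estimates. As exchanging occupation variables does not change $\widehat m$, the generator $\mathcal{L}_{N,0}$ annihilates every function of $\widehat m$, so $\mathcal{L}_{N,0}\widehat{m}(\overline\pi_s^N)=0$; and since flipping a site $x\in\{1,N-1\}$ changes $\widehat m$ by $N^{-1}(1-2\overline{\e}_s^N(x))$, a direct computation gives
\begin{equation*}
N\,\mathcal{L}_{N,b}^G\,\widehat{m}(\overline\pi_s^N)
= \sum_{x\in\{1,N-1\}}\Big[ e^{G_s}r_x\big(1-\overline{\e}_s^N(x)\big) - e^{-G_s}(1-r_x)\,\overline{\e}_s^N(x)\Big].
\end{equation*}
Hence, by Dynkin's formula,
\begin{equation*}
\overline M_t^N := \widehat{m}(\overline\pi_t^N) - \widehat{m}(\overline\pi_0^N) - \int_0^t \sum_{x\in\{1,N-1\}}\Big[ e^{G_s}r_x\big(1-\overline{\e}_s^N(x)\big) - e^{-G_s}(1-r_x)\,\overline{\e}_s^N(x)\Big]\,ds
\end{equation*}
is a martingale, and a second application of Dynkin's formula together with conservativeness of the bulk shows $\langle \overline M_t^N\rangle = \frac1N\int_0^t\sum_{x\in\{1,N-1\}}\big[e^{G_s}r_x(1-\overline{\e}^N_s(x)) + e^{-G_s}(1-r_x)\overline{\e}^N_s(x)\big]\,ds$, which is $O(1/N)$ uniformly on $[0,T]$ because $G$ is bounded.

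Next I would transfer Lemmas~\ref{tightness} and~\ref{RL} to $\overline{\mathbb P}^G_{\mu_N}$. Tightness of the laws of $\widehat m(\overline\pi_t^N)$ on $\mathcal D_{[0,1]}^T$, with limit points supported on continuous trajectories, follows as in Lemma~\ref{tightness}: Doob's inequality gives $\overline{\mathbb E}^G_{\mu_N}[\sup_{t\le T}|\overline M_t^N|]\le 4\,\overline{\mathbb E}^G_{\mu_N}[\langle\overline M_T^N\rangle]^{1/2}\to0$, while the compensator $\int_s^t N\mathcal{L}_{N,b}^G\widehat m(\overline\pi_r^N)\,dr$ has a uniformly bounded integrand. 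The super-exponential replacement Lemma~\ref{RL} holds verbatim for the perturbed process: its proof uses only the bulk Dirichlet-form identity of Lemma~\ref{Ineg}(i), which is unchanged, together with the uniform boundedness of $\langle\mathcal L_{N,b}\sqrt f,\sqrt f\rangle_{\nu_\rho^N}$; the computation of Lemma~\ref{Ineg}(ii) carries over, with the extra bounded factors $e^{\pm G_s}$, to give $\langle\mathcal L_{N,b}^G\sqrt f,\sqrt f\rangle_{\nu_\rho^N}\le U_N^G$ with $(U_N^G)_{N\ge1}$ uniformly bounded. (Alternatively, one can dominate the Radon--Nikodym derivative of the perturbed path measure with respect to the unperturbed one on the relevant events and invoke Lemma~\ref{RL} directly.)

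Finally I would identify the limit points. Applying the replacement lemma to each of the four terms above with $g(s)=e^{G_s}r_x$ and $g(s)=-e^{-G_s}(1-r_x)$, $x\in\{1,N-1\}$, and using $r_1+r_{N-1}=\alpha+\beta=2\gamma$ and $(1-r_1)+(1-r_{N-1})=2(1-\gamma)$, one obtains that for every $\delta>0$ and $t\in[0,T]$,
\begin{equation*}
\overline{\mathbb P}^G_{\mu_N}\Big[\,\Big|\widehat m(\overline\pi_t^N)-\widehat m(\overline\pi_0^N)-\int_0^t\big(2\gamma(1-\widehat m(\overline\pi_s^N))e^{G_s}-2(1-\gamma)\widehat m(\overline\pi_s^N)e^{-G_s}\big)\,ds\,\Big|>\delta\,\Big]\;\underset{N\to\infty}{\longrightarrow}\;0.
\end{equation*}
Combined with $\overline{\mathbb E}^G_{\mu_N}[\sup_{t\le T}|\overline M_t^N|]\to0$, tightness, and the Portmanteau lemma (exactly as in the proof of Proposition~\ref{LGNmass}, using a dense set of times and right-continuity), this shows that every limit point $\mathbb Q^*$ of $(\overline{\mathbb P}^G_{\mu_N})$ is concentrated on the trajectories $\pi$ with $\widehat m(\pi_0)=\overline m_0$ satisfying $\widehat m(\pi_t)=\overline m_0+\int_0^t\big(2\gamma(1-\widehat m(\pi_s))e^{G_s}-2(1-\gamma)\widehat m(\pi_s)e^{-G_s}\big)\,ds$ for all $t\in[0,T]$. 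The right-hand side of \eqref{EDPmasseperturbed} is continuous in $t$ and globally Lipschitz in $\overline m$, so this integral equation has a unique solution $\overline m(\cdot)$; hence $\mathbb Q^*=\delta_{\overline m(\cdot)}$, and convergence in law to a deterministic trajectory gives the claimed convergence in probability.

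I do not expect a genuine obstacle here: the one place needing an explicit remark is the transfer of the replacement Lemma~\ref{RL}, and this is immediate precisely because the perturbation touches only the boundary rates, which were already bounded, while the bulk — whose $N^3$ acceleration supplies the super-exponential decay — is left intact.
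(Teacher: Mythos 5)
Your proposal is correct and follows essentially the same route as the paper, which itself only sketches the argument by reducing to the proof of Proposition \ref{LGNmass}: the same Dynkin martingale and quadratic variation computation, tightness, the transfer of the replacement lemma via the uniform bound $\langle \mathcal{L}_{N,b}^{G}\sqrt{f},\sqrt{f}\rangle_{\nu_\rho^N}\leq C_0$, and identification of limit points with the unique solution of \eqref{EDPmasseperturbed}. Your write-up is in fact more detailed than the paper's at the limit-identification step.
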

The proof of Theorem \ref{LGNmassperturbed} follows the same lines as that of Theorem \ref{LGNmass}. Indeed, start by writing the martingale associated to $\widehat{m}(\pi_t^N) $
\begin{equation}
\begin{split}
        \overline{M}_t^N &:= \widehat{m}(\pi_t^N)  - \widehat{m}(\pi_0^N)   -N^3\inte\mathcal{L}_{N,0}\widehat{m}(\pi_s^N) \mathrm{d}{s}- N\inte \mathcal{L}_{N,b}^G \widehat{m}(\pi_s^N)  \mathrm{d}{s}\\
        &= \widehat{m}(\pi_t^N)  - \widehat{m}(\pi_0^N)  - \inte \sum_{x\in \{1,N-1\}}\left[e^{G_s}r_x(1-\overline{\e}_s(x))-e^{-G_s}\overline{\e}_s(x)(1-r_x) \right]ds.
\end{split}
\end{equation}
Using the quadratic variation of $\overline{M}_t^N$ one proves tightness of the sequence of measures $\overline{\mathbb{P}}_{\mu_N}^G$. Then, one proves the replacement lemma: given $g\in \mathcal{C}([0,T])$, for any $t\in [0,T]$ and $\delta>0$, for any $x\in \{1,N-1\}$,
\begin{equation}\label{RLperturb}
    \underset{N\rightarrow \infty}{\overline{\lim}}~  \frac{1}{N} \log ~ \overline{\mathbb{P}}_{\mu_N}^G\left[~ \Big|\inte g(s)(\e_s(x)-\widehat{m}(\pi_s^N) )\mathrm{d}{s} \Big|>\delta \right] = -\infty.
\end{equation}
From this, one proves that a limit point must lie on a solution of \eqref{EDPmasseperturbed}. Uniqueness of the solution of \eqref{EDPmasseperturbed} is immediate.

For the proof of \eqref{RLperturb}, use that there is a uniform constant $C_0$ such that 
$$ \langle \mathcal{L}_{N,b}^{G,t} \sqrt{f},\sqrt{f} \rangle_{\nurhon} \leq C_0.$$
To prove that, proceed as in the proof of (ii) in Lemma \ref{Ineg} and use that the transition rates of the dynamics are bounded in $N$. Then, to recover \eqref{RLperturb}, proceed as in the proof of Lemma \ref{RL} using this estimate.

\subsubsection{Proof of the lower bound}
Fix $m_0$ in $[0,1]$ and consider a sequence $(\e^N)_{N\geq 1}$ such that $\widehat{m}(\pi^N(\e^N))_{N\geq 1}$ converges to $m_0$. We wish to prove that for any open subset $\mathcal{O}$ of $\mathcal{D}_{[0,1]}^T$,
\begin{equation*}
    \underset{N\rightarrow \infty}{\underline{\lim}}~  \frac{1}{N}\log \Tilde{\mathbb{P}}_{\delta_{\e^N}}\left[\widehat{m}(\pi^N)\in \mathcal{O} \right]\geq - \underset{a\in \mathcal{O}}{\inf}I\left(a|m_0\right).
\end{equation*}
Consider a profile $a\in \mathcal{O}\cap \Pi_3^T $ such that $I_T\left(a|m_0\right)<\infty$. For $\varepsilon>0$ denote by $\widehat{B}_{\varepsilon}(a)$ the ball of radius $\varepsilon$ and centered in $a$ for the Skorohod distance. Consider $H\in \mathcal{C}^1([0,T])$ such that $a$ and $H$ are related by \eqref{EDP}. As $\mathcal{O}$ is open, there is $\varepsilon>0$ such that $\widehat{B}_{\varepsilon}(a)\subset \mathcal{O}$, so
\begin{equation}
    \begin{split}
        \Tilde{\mathbb{P}}_{\delta_{\e^N}}\left[\widehat{m}(\pi^N)\in \mathcal{O}\right]&\geq \Tilde{\mathbb{P}}_{\delta_{\e^N}}\left[\widehat{m}(\pi^N)\in \widehat{B}_{\varepsilon}(a) \right] = \overline{\mathbb{E}}_{\delta_{\e^N}}^H\left[~ \frac{d\Tilde{\mathbb{P}}_{\delta_{\e^N}}}{d\overline{\mathbb{P}}_{\delta_{\e^N}}^H} \mathds{1}_{\widehat{m}(\pi^N)\in \widehat{B}_{\varepsilon}(a)} \right]\\
        & = \overline{\mathbb{E}}_{\delta_{\e^N}}^H\left[~ \left.\frac{d\Tilde{\mathbb{P}}_{\delta_{\e^N}}}{d\overline{\mathbb{P}}_{\delta_{\e^N}}^H}\right| \widehat{m}(\pi^N)\in \widehat{B}_{\varepsilon}(a)\right] \overline{\mathbb{P}}_{\delta_{\e^N}}^H \left[\widehat{m}(\pi^N)\in \widehat{B}_{\varepsilon}(a) \right]
    \end{split}
\end{equation}

The Radon-Nikodym derivative in the expectation is given by:
\begin{equation}
    \begin{split}
        \left( M_T(H) \right)^{-1}
        &=  \exp\left\{-N \left(\widehat{m}_T(\pi^N)H_T - \widehat{m}_0(\pi^N)H_0 - \int_0^T \partial_s H_s \widehat{m}_s(\pi^N)ds\right.\right.\\
        &-\left.\left.\int_0^T \sum_{x\in \{1,N-1\}}\left[r_x(1-\overline{\e}^N_{s}(x))(e^{H_s}-1) + (1-r_x)\overline{\e}^N_{s}(x) (e^{-H_s}-1)\right]ds \right)\right\}.
    \end{split}
\end{equation}
For a justification of that, see for instance \cite{KL}, Appendix 1. Therefore,
\begin{equation}
    \begin{split}
        \frac{1}{N}\log \Tilde{\mathbb{P}}_{\delta_{\e^N}}\left[\widehat{m}(\pi^N)\in \mathcal{O} \right] &\geq \frac{1}{N}\log~  \overline{\mathbb{E}}_{\delta_{\e^N}}^{H,\varepsilon} \left[ M_T(H) ^{-1}\Big| \widehat{m}(\pi^N)\in \widehat{B}_{\varepsilon}(a) \right] + \frac{1}{N} \log~ \overline{\mathbb{P}}_{\delta_{\e^N}}^{H}\left[\widehat{m}(\pi^N)\in \widehat{B}_{\varepsilon}(a)  \right].
    \end{split}
\end{equation}
By the hydrodynamic limit of the perturbed process (Theorem \ref{LGNmassperturbed}), the last term goes to $0$ when $N \rightarrow \infty$. Using Jensen's inequality in the conditional expectation, the replacement lemma \ref{RL} and the martingale computations in Subsection 5.1.1,  we get that the right hand side is bounded below by $-J_{T,H}\left(a\right)=-I_T\left(a\right)$. Therefore,
\[ \underset{N \rightarrow \infty}{\underline{\lim}}~ \frac{1}{N}\log \Tilde{\mathbb{P}}_{\delta_{\e^N}}\left[\widehat{m}(\pi^N)\in \mathcal{O} \right]\geq -I(a)~ ~ \forall a\in \mathcal{O}\cap \Pi_3^T. \]
Optimize this in $a\in \mathcal{O}\cap \Pi_3^T$ and the lower bound follows by $I_T$ - density of $\Pi_3^T$ (see Lemma \ref{PI3}). Again, we refer to \cite{KL}, Chapter 10 for more details.

\section{Static large deviations}

In this section, we prove Theorem \ref{StaticGD}, that is, that the quasi potential $V$ is the large deviations functional for the total mass under the stationary profile. First we prove the lower bound, then the upper bound. In both cases we make use of the dynamic large deviations principle (Theorem \ref{DynamicGD}). In the lower bound we use the hydrostatic limit for the total mass (Theorem \ref{LGNequ}) and for the upper bound, inspired by \cite{FW}, \cite{BodineauGiac} and \cite{Farfan}, we use a Markov chain representation of the invariant measure.
For $\delta>0$, denote by
\begin{equation}\label{boule}
  B_{\delta} = \left\{\pi \in \mathcal{M},~ |\widehat{m}(\pi)-\gamma|<\delta \right\}.   
\end{equation}

\subsection{Proof of the lower bound}
Fix an open set $\mathscr{O}$ in $[0,1]$. By definition of $V$, it is enough to prove that for any $m\in \mathscr{O}$, for any $T>0$ and for any $a\in \mathcal{C}([0,T])$ such that $a(0)=\gamma$ and $a(T)=m$,
$$\underset{N \rightarrow \infty}{\underline{\lim}}~ \frac{1}{N} \log \mu_{ss}^N\left[\widehat{m}(\pi^N)\in \mathscr{O} \right] \geq -I_T(a|\gamma). $$
Fix $\widehat{a}\in \mathcal{C}([0,T])$ such that $\widehat{a}(0)=\gamma$ and $\widehat{a}(T)=m$. Recall (see Remark \ref{R1}) that there is a sequence $(\varepsilon_N)_{N\geq 1} \downarrow 0$ such that $\mu_{ss}^N\left(B_{\varepsilon_N} \right)$ converges to $1$. Consider such a sequence $(\varepsilon_N)_{N\geq 1}$. By stationarity of $\mu_{ss}^N$,
\begin{equation*}
    \begin{split}
        \mu_{ss}^N\left[\widehat{m}(\pi^N)\in \mathscr{O} \right] &= \Tilde{\mathbb{E}}_{\mu_{ss}^N}\left[\Tilde{\mathbb{P}}_{\delta_\e}\left(\widehat{m}(\pi_T^N)\in \mathscr{O}\right) \right] \\
        &\geq\Tilde{\mathbb{E}}_{\mu_{ss}^N}\left[\Tilde{\mathbb{P}}_{\delta_\e}\left(\widehat{m}(\pi_T^N)\in \mathscr{O} \right) \mathds{1}_{\e\in (\pi^N)^{-1}\left(B_{\varepsilon_N}\right) } \right]\\
        & \geq \mu_{ss}^N\left(B_{\varepsilon_N} \right) \underset{\e\in (\pi^N)^{-1}\left(B_{\varepsilon_N}\right)}{\inf} \Tilde{\mathbb{P}}_{\delta_\e}\left[ \pi_T^N\in \mathscr{O}\right]\\
        &\geq \frac{1}{2}~  \underset{\e\in (\pi^N)^{-1}\left(B_{\varepsilon_N}\right)}{\inf} \Tilde{\mathbb{P}}_{\delta_\e}\left[ \pi_T^N\in \mathscr{O} \right],
    \end{split}
\end{equation*}
where we used that for $N$ large enough, $\mu_{ss}^N\left(B_{\varepsilon_N} \right) \geq \frac{1}{2}$. Now, since $(\pi^N)^{-1}\left(B_{\varepsilon_N}\right)$ is finite, the infimum above is achieved for a certain $\e^N\in (\pi^N)^{-1}\left(B_{\varepsilon_N}\right)$, so
\begin{equation*}
\begin{split}
        \underset{N \rightarrow \infty}{\underline{\lim}}~ \frac{1}{N} \log \mu_{ss}^N\left[\widehat{m}(\pi^N)\in \mathscr{O}\right] &\geq \underset{N \rightarrow \infty}{\underline{\lim}}~ \frac{1}{N} \log~  \Tilde{\mathbb{P}}_{\delta_{\e^N}}\left[ \widehat{m}(\pi_T^N) \in \mathscr{O} \right]\\
        &\geq \underset{N \rightarrow \infty}{\underline{\lim}}~ \frac{1}{N} \log~  \Tilde{\mathbb{P}}_{\delta_{\e^N}}\left[ \widehat{m}\left((\pi_{t}^N)_{t\in [0,T]} \right)\in \mathscr{O}_T \right]
\end{split}
\end{equation*}
where
$\mathscr{O}_T = \big\{ ~a \in \mathcal{D}_{[0,1]}^T, ~ a(T)\in \mathscr{O}\big\} $
is an open set because $\mathscr{O}$ is open. By the dynamic large deviations principle and since $\widehat{m}(\pi^N(\e^N))$ converges to $\gamma$,
$$\underset{N \rightarrow \infty}{\underline{\lim}}~ \frac{1}{N} \log~  \Tilde{\mathbb{P}}_{\delta_{\e^N}}\left[ \widehat{m}\left((\pi_{t}^N)_{t\in [0,T]} \right)\in \mathscr{O}_T \right] \geq - \underset{v\in \mathscr{O}_T}{\inf}~I_T(v|\gamma) \geq -I_T(\widehat{a}|\gamma). $$

\subsection{Proof of the upper bound}
Consider $\mathscr{F}$ a closed subset of $[0,1]$. If $\gamma\in \mathscr{F}$, $\underset{m\in \mathscr{F}}{\inf}V(m)=V(\gamma)=0$ and the upper bound follows. 

Now, let us deal with the case where $\gamma \notin \mathscr{F}$. There is a $\delta>0$ such that $[\gamma-3\delta, \gamma+ 3 \delta]\cap \mathscr{F}=\emptyset$. As mentioned above, the idea is to use a representation of the invariant measure $\mu_{ss}^N$ in terms of an invariant measure for an irreducible dynamics defined on a subset of $\Omega_N$, as done in \cite{FW}, \cite{BodineauGiac} and \cite{Farfan}. The subset considered is included in the set of configurations $\e$ such that 
\[|\widehat{m}(\pi^N(\e))- \gamma| <\delta,\]
that is, the configurations whose associated empirical measure is in $B_{\delta}$, defined in \eqref{boule}. 

Define the closed set
$$R_{\delta} = \{\pi \in \mathcal{M},~ 2\delta\leq|\widehat{m}(\pi) -\gamma|\leq 3\delta \}. $$
For any integer $N$ and any subset $A$ of $\mathcal{M}$, let $A^N = (\pi^N)^{-1}(A)\in \Omega_N$, and denote by $\tau_{A^N}: \mathcal{D}(\R^+,\R)\rightarrow \R^+$ the entry time in $A^N$ of $\e_t^N$, that is,
$$\tau_{A^N} = \inf\{t\geq 0,~ \e_t^N\in A^N \}. $$
We also denote by $\mathfrak{F}=\widehat{m}^{-1}(\mathscr{F})\in \mathcal{M}$, which is closed because $\widehat{m}$ is continuous and $\mathscr{F}$ is closed. Define $\partial B_{\delta}^N$ as the set of configurations $\e \in B_{\delta}^N$ such that there is a finite sequence $(\e^i)_{1\leq i \leq k}$ such that $\e^0\in R_{\delta}^N$, $\e^k=\e$ and
\begin{itemize}
    \item [(i)] $\e^{i}$ is obtained from $\e^{i-1}$ by a move which is allowed by the dynamics.
    \item [(ii)] for any $1\leq i <k$, $\e^{i}\notin B_{\delta}^N$.
\end{itemize}
Define
$$\tau_1= \inf \{t>0,~  \exists~ s<t,~ \e_s\in R_{\delta}^N~ \text{and}~ \e_t\in \partial B_{\delta}^N \}. $$

\begin{lem}
    The sequence $(\e_{\tau_k})_{k\geq 1}$, where $\tau_k$ is obtained by iterating $\tau_1$, is an irreducible Markov chain.
\end{lem}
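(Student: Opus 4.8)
The plan is to show that $(\e_{\tau_k})_{k\ge1}$ is a time-homogeneous Markov chain on the finite state space $\partial B_\delta^N$ and then that it is irreducible, reducing everything to two properties of the underlying continuous-time process $(\e_t^N)_{t\ge0}$ on $\Omega_N$: the strong Markov property, and irreducibility (all jump rates are positive since $\alpha,\beta\in(0,1)$), hence recurrence on the finite set $\Omega_N$. Preliminarily, for $N$ large $R_\delta^N$ and $B_\delta^N$ are non-empty — the mass $\widehat{m}(\pi^N)$ takes all values of a $1/N$-grid, which eventually meets any interval of positive length — so $\partial B_\delta^N\neq\emptyset$; by recurrence each hitting time entering the construction of the $\tau_k$ is a.s.\ finite, so $\tau_k<\infty$ a.s. Moreover $\e_{\tau_k}\in\partial B_\delta^N$ for every $k\ge1$: at $\tau_k$ the process has just re-entered $B_\delta^N$ after a visit to $R_\delta^N$ that was not followed by a return to $B_\delta^N$, and the piece of trajectory between that visit and $\tau_k$ is, jump by jump, exactly a path of the type occurring in the definition of $\partial B_\delta^N$. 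Finally, since $R_\delta^N\cap B_\delta^N=\emptyset$, whenever $\e_0\in B_\delta^N$ one has $\tau_1=\inf\{t>\sigma\colon \e_t^N\in B_\delta^N\}$ with $\sigma=\inf\{t\ge0\colon \e_t^N\in R_\delta^N\}$, which in particular makes $\tau_1$ a stopping time.

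For the Markov property, note that because $\e_{\tau_k}\in\partial B_\delta^N\subset B_\delta^N$ and $B_\delta^N\cap R_\delta^N=\emptyset$, after time $\tau_k$ the pattern defining $\tau_1$ restarts from scratch, so $\tau_{k+1}=\tau_k+\tau_1\circ\theta_{\tau_k}$ with $\theta$ the time-shift. Applying the strong Markov property at the stopping time $\tau_k$, and using that $\e_{\tau_1},\dots,\e_{\tau_k}$ are $\mathcal F_{\tau_k}$-measurable, one obtains, for every $\e'\in\partial B_\delta^N$,
$$\Tilde{\mathbb P}\big[\,\e_{\tau_{k+1}}=\e'\,\big|\,\mathcal F_{\tau_k}\,\big]\;=\;\Tilde{\mathbb P}_{\e_{\tau_k}}\big[\,\e_{\tau_1}=\e'\,\big]\;=:\;p(\e_{\tau_k},\e')\,,$$
which depends on the past only through $\e_{\tau_k}$ and does not depend on $k$. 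Hence $(\e_{\tau_k})_{k\ge1}$ is a time-homogeneous Markov chain on $\partial B_\delta^N$ with transition kernel $p$.

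It remains to prove irreducibility; I in fact expect $p(\e,\e')>0$ for every $\e,\e'\in\partial B_\delta^N$. Fix such a pair and, by the definition of $\partial B_\delta^N$, pick a path $\xi^0,\dots,\xi^l=\e'$ of allowed moves with $\xi^0\in R_\delta^N$ and $\xi^1,\dots,\xi^{l-1}\notin B_\delta^N$; since a move straddling $B_\delta^N$ and its complement is necessarily a boundary flip, this path lies entirely on one side of $\gamma$, say $\widehat{m}\ge\gamma+\delta$ along it (the opposite side is symmetric). It then suffices to exhibit a positive-probability trajectory of $(\e_t^N)$ started from $\e$ which first visits $R_\delta^N$, afterwards stays outside $B_\delta^N$ until reaching $\xi^0$, and then follows $\xi^0,\dots,\xi^l$ into $\e'$: for such a trajectory the first visit to $R_\delta^N$ is followed by the first return to $B_\delta^N$ exactly at $\e'$, i.e.\ $\{\e_{\tau_1}=\e'\}$ occurs. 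Such a trajectory is produced from irreducibility of $(\e_t^N)$ on $\Omega_N$ by first driving the process (possibly through $B_\delta^N$) to a configuration with $\widehat{m}\ge\gamma+\delta$, then moving within $\{\widehat{m}\ge\gamma+\delta\}$ — which is connected under the dynamics, via adjacent transpositions within a mass level and boundary flips between consecutive levels — to $\xi^0$, and finally appending $\xi^0,\dots,\xi^l$. The only delicate point, which I expect to be the main (though routine) obstacle, is the bookkeeping ensuring that after its first visit to $R_\delta^N$ the trajectory does not return to $B_\delta^N$ before hitting $\e'$; this is automatic as soon as the trajectory remains on the high side. Combining the three steps gives the claim.
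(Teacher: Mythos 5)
Your proof is correct, and for the irreducibility part it takes a genuinely different route from the paper's. The paper establishes only multi-step accessibility: given $\xi,\e\in\partial B_\delta^N$, it concatenates a path from $\xi$ to the starting point $\e^0\in R_\delta^N$ of the defining path of $\e$ with that defining path itself, and then extracts from the concatenation the successive indices at which it visits $R_\delta^N$ and then first returns to $\partial B_\delta^N$; each consecutive pair of extracted configurations gives a positive one-step transition probability of the embedded chain, so $\e$ is accessible from $\xi$ in finitely many steps. You instead prove the stronger statement that every one-step transition probability $p(\e,\e')$ is positive, by exhibiting a single trajectory that climbs to $R_\delta^N$, travels to $\xi^0$ while staying outside $B_\delta^N$, and re-enters $B_\delta^N$ exactly at $\e'$. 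This buys a cleaner conclusion (the chain is even aperiodic and one-step irreducible), but it costs you two extra inputs the paper does not need: the connectivity of the super-level set $\{\widehat{m}\ge\gamma+\delta\}$ under the dynamics, and the bookkeeping you yourself flag, namely that the first stage must be arranged (e.g.\ by making the mass monotone) so that the trajectory does not visit $R_\delta^N$ and then fall back into $B_\delta^N$ before reaching $\e'$ — without this the event $\{\e_{\tau_1}=\e'\}$ would not be realized, so it is a genuine constraint, though routine to enforce. You also verify the Markov property of $(\e_{\tau_k})_{k\ge1}$ via the strong Markov property at $\tau_k$, a step the paper takes for granted.
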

\begin{proof}
   Consider $\e, \xi \in \partial B_{\delta}^N$ and $(\e^i)_{1\leq i \leq k}$ a path connecting $\e^0\in R_{\delta}^N$ to $\e$. By irreducibility of the original dynamics, there is a sequence $(\xi^{i})_{0\leq i \leq \ell}$ connecting $\xi^0=\xi$ to $\xi^{\ell}=\e^0$. Then, consider the sequence of configurations $z^0=\xi^0,...,z^{\ell}=\xi^{\ell}=\e^0, z^{\ell +1}=\e^1,.., z^{\ell+k} = \e^k=\e$ connecting $\xi$ to $\e$. From the path $z$ we can extract a sequence $\Tilde{z}^0=\xi,...,\Tilde{z}^p=\e $ in $\partial B_{\delta}^N$, such that
$$\Tilde{\mathbb{P}}_{\Tilde{z}^{i-1}}\left[\e_{\tau_1}=\Tilde{z}^{i} \right]>0. $$
Indeed, consider $j_0=0$ and for $i\geq 1$, let
$$j_{2i-1}=\underset{\underset{z^j\in R_{\delta}^N}{j>j_{2i-2}}}{\min}~ \{j\} ~ ~ ~~\text{and}~ ~ ~~j_{2i}=\underset{\underset{z^j\in \partial B_{\delta}^N}{j>j_{2i-1}}}{\min}~ \{j\}.  $$
The sequence $\Tilde{z}^{i}=z^{2j}$ satisfies the assumptions. 
\end{proof}

Since $\partial B_{\delta}^N$ is finite, the irreducible Markov chain thus defined has a unique invariant measure that we denote by $\nu_N$. Following \cite{FW}, \cite{BodineauGiac} and \cite{Farfan}, the stationary measure $\mu_{ss}^N$ can be written as follows: for every subset $A$ of $\Omega_N$,
\begin{equation}
    \mu_{ss}^N(A) = \frac{1}{C_N}\int_{\partial B_{\delta}^N}\Tilde{\mathbb{E}}_{\delta_\e}\left(\int_0^{\tau_1}\mathds{1}_{\e_s\in A}ds \right)d\nu_N(\e)
\end{equation}
with $C_N=\int_{\partial B_{\delta}^N}\Tilde{\mathbb{E}}_{\delta_\e}\left(\tau_1\right)d\nu_N(\e). $ 
Therefore,
\begin{equation}\label{MF}
    \begin{split}
        \mu_{ss}^N(\widehat{m}^{-1}(\mathscr{F})) &= \frac{1}{C_N}\int_{\partial B_{\delta}^N}\Tilde{\mathbb{E}}_{\delta_\e}\left(\int_0^{\tau_1}\mathds{1}_{\e_s\in (\pi^N)^{-1}\left(\widehat{m}^{-1}(\Tilde{\mathscr{F}})\right)}ds \right)d\nu_{N}(\e)\\
        & \leq \frac{1}{C_N} \underset{\e\in \partial B_{\delta}^N}{\sup}\Tilde{\mathbb{E}}_{\delta_\e}\left(\int_0^{\tau_1}\mathds{1}_{\e_s\in (\pi^N)^{-1}\left(\widehat{m}^{-1}(\Tilde{\mathscr{F}})\right)}ds \right)\\
        &\leq \frac{1}{C_N} \underset{\e\in \partial B_{\delta}^N}{\sup}\Tilde{\mathbb{P}}_{\delta_\e}\left[\tau_{\mathfrak{F}^N}<\tau_1 \right] \sup_{\e\in \mathfrak{F}^N}\Tilde{\mathbb{E}}_{\delta_\e}\left[\tau_1\right],
    \end{split}
\end{equation}
where the last inequality results from the strong Markov property.

For $N$ large, any trajectory in $\mathcal{D}(\R^+,\Omega_N)$ starting from $\mathfrak{F}^N$ has to perform at least one jump before reaching $\partial B_{\delta}^N$, because $[\gamma-3\delta, \gamma+ 3 \delta]\cap \mathscr{F}=\emptyset$. As the jump rates of the dynamics in the bulk are of order $N^3$ and those for the dynamics at the boundary are of order $N$, there is a constant $c>0$ depending on $\alpha$ and $\beta$ such that  $C_N>\frac{1}{cN^3}$. If the mesh size $1/N$ of $[0,1]$ is small enough, that is $N$ large enough, by continuity of $\widehat{m}$, any trajectory in $\mathcal{D}(\R^+,\Omega_N)$ starting in $\partial B_{\delta}^N$ satisfies $\tau_{R_{\delta}^N}\leq \tau_{\mathfrak{F}^N}$. That means that when $N$ is large enough, a flip in the configuration cannot lead directly from $\partial B_{\delta}^N$ to $\mathfrak{F}^N$ without passing through $R_{\delta}^N$. For that same reason, we also have that for $N$ large enough, any trajectory starting in $\mathfrak{F}^N$ satisfies $\tau_1 = \tau_{B_{\delta}^N}$ almost surely. The second supremum in \eqref{MF} is therefore bounded by $\sup_{\e\in \mathfrak{F}^N}\Tilde{\mathbb{E}}_{\delta_\e}\left[\tau_{B_{\delta}^N}\right]$ and using the strong Markov property, the first supremum satisfies, for $N$ large enough:
\begin{equation*}
    \begin{split}
        \underset{\e\in \partial B_{\delta}^N}{\sup}\Tilde{\mathbb{P}}_{\delta_\e}\left[\tau_{\mathfrak{F}^N}<\tau_1 \right] &= \underset{\e\in \partial B_{\delta}^N}{\sup}\Tilde{\mathbb{E}}_{\delta_\e}\left[\Tilde{\E}_{\e_{\tau_{R_{\delta}^N}}}\left[\mathds{1}_{\tau_{\mathfrak{F}^N}<\tau_1} \right]\mathds{1}_{\tau_{R_{\delta}^N}<\tau_{\mathfrak{F}^N}} \right]\\
        & \leq \sup_{\e\in R_{\delta}^N} \Tilde{\mathbb{P}}_{\delta_\e}\left[\tau_{\mathfrak{F}^N}<\tau_1 \right] = \sup_{\e\in R_{\delta}^N}\Tilde{\mathbb{P}}_{\delta_\e}\left[\tau_{\mathfrak{F}^N}<\tau_{B_{\delta}^N} \right].
    \end{split}
\end{equation*}
We are left with
$$\mu_{ss}^N(\widehat{m}^{-1}(\mathscr{F})) \leq cN^3 \sup_{\e\in R_{\delta}^N}\Tilde{\mathbb{P}}_{\delta_\e}\left[\tau_{\mathfrak{F}^N}<\tau_{B_{\delta}^N} \right] \sup_{\e\in \mathfrak{F}^N}\Tilde{\mathbb{E}}_{\delta_\e}\left[\tau_{B_{\delta}^N}\right]. $$
Now, to prove the upper bound, it is enough to prove the following result:

\begin{lem}\label{Lemprinc}
    \begin{itemize}
        \item [(i)] For every $\delta>0$,
        \begin{equation}
            \underset{N \rightarrow \infty}{\overline{\lim}}~ \frac{1}{N} \log \sup_{\e\in \mathfrak{F}^N}\Tilde{\mathbb{E}}_{\delta_\e}\left[\tau_{B_{\delta}^N}\right] \leq 0.
        \end{equation}
        \item [(ii)] For every $\varepsilon>0$, there exists $\delta>0$ such that
        \begin{equation}
            \underset{N \rightarrow \infty}{\overline{\lim}}~ \frac{1}{N}\log \sup_{\e\in R_{\delta}^N}\Tilde{\mathbb{P}}_{\delta_\e}\left[\tau_{\mathfrak{F}^N}<\tau_{B_{\delta}^N} \right] \leq -\inf_{m\in \mathscr{F}}V(m) + \varepsilon.
        \end{equation}
    \end{itemize}
\end{lem}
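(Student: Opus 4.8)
The plan is to prove the two estimates in Lemma \ref{Lemprinc} separately, both as consequences of the dynamical large deviations principle (Theorem \ref{DynamicGD}) together with the identification $V=S$ (Lemma \ref{Identific}) and the explicit relaxation of the mass ODE \eqref{f01}.

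For part (i), I would fix $\delta>0$ and argue that, starting from any configuration whose mass lies in $\mathscr{F}$ (hence at least $3\delta$ away from $\gamma$), the hitting time $\tau_{B_\delta^N}$ of the set $B_\delta^N$ is typically $O(1)$. Concretely, by \eqref{f01} the deterministic mass trajectory starting from any $m_0\in[0,1]$ enters the ball $(\gamma-\delta,\gamma+\delta)$ after a fixed time $T_\delta$ (e.g. $T_\delta$ with $e^{-2T_\delta}<\delta$); by the hydrodynamic limit of the mass (Proposition \ref{LGNmass}) applied uniformly over initial conditions, $\Tilde{\mathbb{P}}_{\delta_\e}[\tau_{B_\delta^N}>T_\delta+1]\to 0$, in fact it should decay, but even a bound bounded away from $1$ suffices. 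Then a standard renewal/submultiplicativity argument — splitting time into blocks of length $T_\delta+1$ and using the Markov property — gives $\Tilde{\mathbb{E}}_{\delta_\e}[\tau_{B_\delta^N}]\le C(T_\delta)$ uniformly in $\e\in\mathfrak F^N$ and in $N$, which is far more than the subexponential bound $\tfrac1N\log(\cdot)\le 0$ required. One must be slightly careful that the hydrodynamic convergence is uniform over the (finite, but $N$-dependent) set of initial configurations with mass in $\mathscr F$; this follows because Proposition \ref{LGNmass} only uses that $\mu_N$ is associated to a mass $m_0\in[0,1]$, and the constants in Lemma \ref{tightness} and Lemma \ref{RL} are uniform in $\mu_N$.

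For part (ii), this is the genuinely delicate estimate and where I expect the main difficulty. Given $\varepsilon>0$, I would choose $\delta>0$ small (to be adjusted) and bound
\[
\sup_{\e\in R_\delta^N}\Tilde{\mathbb{P}}_{\delta_\e}\big[\tau_{\mathfrak F^N}<\tau_{B_\delta^N}\big]
\]
by decomposing over a diverging time horizon: on the event $\{\tau_{\mathfrak F^N}<\tau_{B_\delta^N}\}$, either this happens before some large time $T$, or the process stays outside $B_\delta^N$ up to time $T$. The second possibility has superexponentially small probability by part (i)-type estimates (staying away from $B_\delta^N$ for a long time is costly), so it contributes $-\infty$ after dividing by $N$ and can be discarded for $T$ large. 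On the first event, the mass trajectory $t\mapsto\widehat m(\pi_t^N)$ is an element of $\mathcal D_{[0,1]}^T$ starting in $[\gamma-3\delta,\gamma+3\delta]$, never entering $(\gamma-\delta,\gamma+\delta)$ before time $\tau_{\mathfrak F^N}$, and hitting $\mathscr F$ (which lies at distance $\ge 3\delta$ from $\gamma$); such trajectories form a closed set $\mathcal K_{\delta,T}\subset\mathcal D_{[0,1]}^T$. Applying the upper bound of Theorem \ref{DynamicGD} (the mass at the starting configuration is within $3\delta$ of $\gamma$, so one uses $I_T(\cdot|m_0)$ with $|m_0-\gamma|\le 3\delta$), we get
\[
\limsup_N\frac1N\log\sup_{\e\in R_\delta^N}\Tilde{\mathbb{P}}_{\delta_\e}\big[\tau_{\mathfrak F^N}<\tau_{B_\delta^N},\ \tau_{\mathfrak F^N}\le T\big]
\ \le\ -\inf\{\,I_T(a|m_0)\ :\ a\in\mathcal K_{\delta,T},\ |m_0-\gamma|\le 3\delta\,\}.
\]
The crux is then a lower-semicontinuity/compactness argument showing that this infimum is at least $\inf_{m\in\mathscr F}V(m)-\varepsilon$ once $\delta$ is small and $T$ is large. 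By the decomposition \eqref{DecompoI} and positivity of $I$, a path from $m_0$ (near $\gamma$) to some point of $\mathscr F$ costs at least the cost of a path from $\gamma$ to $\mathscr F$ minus a small correction for the initial segment from $m_0$ to $\gamma$ — and that correction is controlled by $I_{T_0}(a_\kappa|\gamma)\to0$ as in Lemma \ref{ControlI1}, i.e. one can prepend a cheap linear segment. Hence any such path has cost at least $\inf_{m\in\mathscr F}V(m)$ up to an error that vanishes with $\delta$; using compactness of the sublevel sets of $I_T$ (Lemma \ref{Levelset}) and lower semicontinuity, the infimum over $\mathcal K_{\delta,T}$ converges to $\inf_{m\in\mathscr F}V(m)$ as $\delta\to0$, $T\to\infty$. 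Choosing $\delta$ small enough gives the claimed bound with the $\varepsilon$ slack.

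The main obstacle, as indicated, is the variational lower bound in (ii): one must show that forcing the trajectory to avoid $B_\delta^N$ and reach $\mathscr F$ genuinely costs (essentially) the quasipotential $\inf_{m\in\mathscr F}V(m)$, uniformly over the starting point in $R_\delta^N$ and over the horizon $T$. This requires the quasipotential to be ``reached from $\gamma$'' — which is exactly the content of Lemma \ref{Identific} — plus a careful handling of (a) the small mismatch between the true starting mass $m_0\in[\gamma-3\delta,\gamma+3\delta]$ and $\gamma$, absorbed using the cheap-path estimate of Lemma \ref{ControlI1}, and (b) the passage from a finite horizon $T$ to the infimum over all $T>0$ in the definition of $V$, handled by letting $T\to\infty$ after $\delta\to 0$ and using that the cost of long excursions is superexponentially negligible.
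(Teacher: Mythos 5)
Your overall architecture is the right one and, for part (ii), coincides with the paper's: restrict to a finite horizon $T_\delta$, bound the hitting probability by the probability that the mass trajectory lies in the closed set of paths reaching $\mathscr F$ before $T_\delta$, apply the upper bound of Theorem \ref{DynamicGD} with initial mass $m_0\in R_\delta$, and then lower-bound the variational infimum by $\inf_{\mathscr F}V-\varepsilon$ by prepending a cheap linear segment from $\gamma$ to $m_0$ via \eqref{DecompoI} and Lemma \ref{ControlI1} (the paper phrases this as a contradiction argument, you phrase it directly; the substance is identical). Your part (i) takes a mildly different and legitimate route: the paper deduces (i) from the exponential tail bound $\sup_\e\Tilde{\mathbb P}_{\delta_\e}[\tau_{B_\delta^N}\ge kT_0]\le e^{-kC_0N}$ of Lemma \ref{Lemsecond}, whereas you use only the hydrodynamic limit of the mass plus a renewal/Markov iteration to get a geometric tail $\le 2^{-k}$ and hence a uniformly bounded expectation. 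For (i) alone that is enough (indeed more elementary), since $\frac1N\log$ of an $N$-uniform constant is $0$.

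There is, however, one genuine gap, precisely where you lean on ``part (i)-type estimates'' in part (ii). To discard the event $\{\tau_{B_\delta^N}\ge T\}$ you need
\begin{equation*}
\limsup_{N}\tfrac1N\log\sup_{\e}\Tilde{\mathbb P}_{\delta_\e}\bigl[\tau_{B_\delta^N}\ge T\bigr]\;\le\;-\inf_{m\in\mathscr F}V(m)
\end{equation*}
for $T$ large, i.e.\ a probability exponentially small \emph{in $N$} with a rate that grows with $T$. The renewal bound your part (i) produces is $2^{-k}$ with $k\sim T/T_0$: it is small in $T$ but does not decay with $N$ at all, so after dividing by $N$ it contributes $0$, not $-\infty$ and not $-\inf_{\mathscr F}V$. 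The missing ingredient is the paper's Lemma \ref{Lemsecond}: a \emph{single-block} estimate $\sup_\e\Tilde{\mathbb P}_{\delta_\e}[\tau_{B_\delta^N}\ge T_0]\le e^{-C_0N}$, which is then iterated by the strong Markov property to get $e^{-kC_0N}$. That single-block bound cannot come from the law of large numbers; it is proved by applying the dynamical large deviations upper bound to the closed set of mass trajectories ending outside $(\gamma-\delta,\gamma+\delta)$ at time $T_0$, together with Lemma \ref{interm}, which asserts that the corresponding infimum of $I_{T_0}$ is strictly positive (because zero-cost trajectories solve the relaxation ODE and hence are $\delta$-close to $\gamma$ at large times, by Corollary \ref{Strongsol} and compactness of the level sets). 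Once you replace your LLN-based tail estimate by this LDP-based one, both parts of the lemma follow as you describe.
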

For that, use the following result proved later on.
\begin{lem} \label{Lemsecond}
    For every $\delta>0$, there is $T_0, C_0, N_0>0$ such that for any $N \geq N_0$ and for any integer $k>0$,
    \begin{equation}
        \sup_{\e\in \Omega_N} \Tilde{\mathbb{P}}_{\delta_\e} \left[\tau_{B_{\delta}^N} \geq k T_0 \right] \leq \exp\left(-kC_0N \right).
    \end{equation}
\end{lem}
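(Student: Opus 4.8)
The plan is to reduce, via the Markov property, to a single time window $[0,T_0]$ and, on that window, to compare the total-mass process with the solution of \eqref{EDPmasse}, uniformly over the initial configuration. Since $\{\tau_{B_{\delta}^N}\ge T_0\}$ is the event that the process does not visit $B_{\delta}^N$ before time $T_0$, applying the Markov property successively at the times $T_0,2T_0,\dots$ gives
$$\Tilde{\mathbb{P}}_{\delta_\e}\big[\tau_{B_{\delta}^N}\ge kT_0\big]\;\le\;\Big(\sup_{\xi\in\Omega_N}\Tilde{\mathbb{P}}_{\delta_\xi}\big[\tau_{B_{\delta}^N}\ge T_0\big]\Big)^{k},$$
so it is enough to find $T_0,C_0,N_0>0$ with $\sup_{\xi}\Tilde{\mathbb{P}}_{\delta_\xi}[\tau_{B_{\delta}^N}\ge T_0]\le e^{-C_0N}$ for $N\ge N_0$. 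The solution of \eqref{EDPmasse} started from $m_0\in[0,1]$ is $m(t)=\gamma+(m_0-\gamma)e^{-2t}$, so I fix $T_0$ with $e^{-T_0}<\delta/2$; then, for every $\xi$, the solution $m_\xi$ of \eqref{EDPmasse} with $m_\xi(0)=\widehat{m}(\pi^N(\xi))$ satisfies $|m_\xi(s)-\gamma|<\delta/2$ for all $s\in[T_0/2,T_0]$.

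Next I would invoke the martingale decomposition from the proof of Lemma \ref{tightness}: under $\Tilde{\mathbb{P}}_{\delta_\xi}$ one has $\widehat{m}(\pi^N_t)=\widehat{m}(\pi^N(\xi))+\int_0^t 2(\gamma-\widehat{m}(\pi^N_s))\,\mathrm{d}s+R^N_t$, where $R^N_t:=M^N_t+\int_0^t(\widehat{m}(\pi^N_s)-\zeta_s(1))\,\mathrm{d}s+\int_0^t(\widehat{m}(\pi^N_s)-\zeta_s(N-1))\,\mathrm{d}s$ and $M^N$ is the Dynkin martingale of $\widehat{m}(\pi^N_\cdot)$. Subtracting the integral form of \eqref{EDPmasse} for $m_\xi$ and applying Grönwall's lemma yields $\sup_{t\le T_0}|\widehat{m}(\pi^N_t)-m_\xi(t)|\le e^{2T_0}\sup_{t\le T_0}|R^N_t|$. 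On $\{\tau_{B_{\delta}^N}\ge T_0\}$ one has $|\widehat{m}(\pi^N_s)-\gamma|\ge\delta$ for every $s\in[0,T_0)$; comparing with $m_\xi$ on $[T_0/2,T_0)$ forces $\sup_{t\le T_0}|\widehat{m}(\pi^N_t)-m_\xi(t)|>\delta/2$, hence $\sup_{t\le T_0}|R^N_t|>\tfrac{\delta}{2}e^{-2T_0}=:\eta$. It therefore remains to prove $\sup_{\xi}\Tilde{\mathbb{P}}_{\delta_\xi}[\sup_{t\le T_0}|R^N_t|>\eta]\le e^{-C_0N}$ for $N\ge N_0$.

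For this I split $R^N$ into its three terms, each bounded by $\eta/3$. For $M^N$, the jumps are of size $N^{-1}$ and occur at rate $O(N)$ (whence $\langle M^N\rangle_{T_0}\le 2T_0/N$ by \eqref{Varquadra}), so the associated exponential martingale $\exp\{\lambda N M^N_t-\int_0^t\mathcal A^{\lambda N}_s\,\mathrm{d}s\}$ has compensator $\mathcal A^{\lambda N}_s=O(N\lambda^2 e^{\lambda})$, and Doob's maximal inequality for this nonnegative martingale gives $\sup_{\xi}\Tilde{\mathbb{P}}_{\delta_\xi}[\sup_{t\le T_0}|M^N_t|>\eta/3]\le 2\,e^{-c(\eta,T_0)N}$ for all $N$, with a bound that does not depend on $\xi$ (cf.\ \cite[Appendix 1]{KL}). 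For each of the two drift corrections, $t\mapsto\int_0^t(\widehat{m}(\pi^N_s)-\zeta_s(x))\,\mathrm{d}s$ is $1$-Lipschitz, so on a sufficiently fine deterministic grid a union bound reduces $\sup_{t\le T_0}$ to finitely many fixed-time events, each handled by the superexponential replacement Lemma \ref{RL} with $g\equiv 1$; here the only $\xi$-dependence enters through the change-of-measure bound $\|\mathrm{d}\delta_\xi/\mathrm{d}\nurhon\|_\infty\le(\min(\rho,1-\rho))^{-(N-1)}=e^{CN}$ used in \eqref{Tcheby}--\eqref{Bigproba}, which is the same for every $\xi\in\Omega_N$, so these estimates too are uniform in the initial configuration. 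Combining the three bounds proves the one-window estimate, and the Markov iteration above then gives the lemma.

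The point that needs care is precisely this uniformity over all of $\Omega_N$: one cannot simply quote the dynamical large deviations principle (Theorem \ref{DynamicGD}), which is stated for a fixed sequence of configurations whose mass converges into $(0,1)$, so the two worst-case controls --- a $\xi$-free exponential tail for $M^N$ obtained from the exponential martingale, and the observation that the change-of-measure constant in Lemma \ref{RL} is independent of $\xi$ --- must be extracted by hand. Once they are in place, the Grönwall comparison and the Lipschitz-grid reduction are routine.
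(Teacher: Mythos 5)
Your proof is correct, and the reduction to a single time window via the (strong) Markov property is exactly the paper's first step; but for the one-window estimate you take a genuinely different route. The paper picks a worst-case configuration $\e^N$, bounds the event $\{\tau_{B_\delta^N}\ge T_0\}$ by the event that the mass trajectory lies in the closed set $D_{\delta,\Tilde T_0}$ of paths ending outside $(\gamma-\delta,\gamma+\delta)$, extracts a subsequence along which $\pi^N(\e^N)$ converges, invokes the dynamical large deviations upper bound (Theorem \ref{DynamicGD}), and concludes with Lemma \ref{interm}, which guarantees that $\inf I_{\Tilde T_0}$ over $D_{\delta,\Tilde T_0}$ is strictly positive once $\Tilde T_0$ is large. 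You instead bypass the LDP entirely: you compare $\widehat{m}(\pi^N_t)$ with the solution of \eqref{EDPmasse} through the martingale decomposition and Gr\"onwall, and control the error term by an exponential martingale bound for $M^N$ together with Lemma \ref{RL} on a Lipschitz grid. What your approach buys is uniformity over all of $\Omega_N$ by inspection: the change-of-measure constant in \eqref{Tcheby} and the exponential martingale bound are configuration-free, whereas the paper's appeal to Theorem \ref{DynamicGD} requires the initial masses to converge to a point of $(0,1)$, a hypothesis that is not obviously satisfied by the worst-case sequence (the subsequential limit $\widehat m(\rho)$ could a priori be $0$ or $1$), and the paper does not address this. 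What the paper's approach buys is brevity and reuse of machinery already established (the LDP upper bound and the lower-semicontinuity/compactness arguments behind Lemma \ref{interm}); your argument re-derives at the level of basic estimates what the LDP encodes. Both are valid; yours is the more careful one on the uniformity point you correctly identify as the crux of the lemma.
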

\begin{proof}\textit{(Lemma \ref{Lemprinc})}
    \begin{itemize}
    \item [(i)]For $\delta>0$ and $T_0,C_0,N_0$ as in the statement of Lemma \ref{Lemsecond}, for every $N\geq N_0$ and every $\e\in \Omega_N$,
$$ \Tilde{\mathbb{E}}_{\delta_\e}\left[\tau_{B_{\delta}^N} \right] = T_0\Tilde{\mathbb{E}}_{\delta_\e}\left[\frac{\tau_{B_{\delta}^N}}{T_0} \right]\leq T_0 \sum_{k=0}^{\infty}~ \sup_{\e\in \Omega_N} \Tilde{\mathbb{P}}_{\delta_\e}\left[\tau_{B_{\delta}^N} \geq k T_0 \right] \leq  T_0 \sum_{k=0}^{\infty}\exp\left(-kC_0N \right) \leq \frac{T_0}{1-e^{-C_0N}}, $$
therefore,
$$\underset{N \rightarrow \infty}{\overline{\lim}}~ \frac{1}{N} \log \sup_{\e\in \mathfrak{F}^N}\Tilde{\mathbb{E}}_{\delta_\e}\left[\tau_{B_{\delta}^N}\right] \leq  \underset{N \rightarrow \infty}{\overline{\lim}}~ \frac{1}{N} \log ~ \frac{T_0}{1-e^{-C_0N}} = 0.  $$
\item[(ii)]Fix $\varepsilon>0$ and pick $\delta>0$, that will be specified according to $\varepsilon$ later on. Consider $T_0, C_0>0$ as in Lemma \ref{Lemsecond}. Since $\underset{m\in \mathscr{F}}{\inf}V(m)<\infty$, there is an integer $k>0$ such that $-kC_0<-\underset{m\in \mathscr{F}}{\inf}V(m)$ so taking $T_{\delta}= k T_0$ we get, by Lemma \ref{Lemsecond},
\begin{equation}\label{omega}
    \underset{N \rightarrow \infty}{\overline{\lim}}~ \frac{1}{N} \log \sup_{\e\in \Omega_N} \Tilde{\mathbb{P}}_{\delta_\e}\left[\tau_{B_{\delta}^N}\geq T_{\delta} \right]\leq -\underset{m\in \mathscr{F}}{\inf}V(m).
\end{equation}
Now write for $\e\in R_{\delta}^N$,
\begin{align*}
    \Tilde{\mathbb{P}}_{\delta_\e}\left[\tau_{\mathfrak{F}^N}<\tau_{B_{\delta}^N} \right]&= \Tilde{\mathbb{P}}_{\delta_\e}\left[\left(\tau_{\mathfrak{F}^N}<\tau_{B_{\delta}^N}\right)\cap \left( \tau_{B_{\delta}^N}< T_{\delta}\right)  \right] + \Tilde{\mathbb{P}}_{\delta_\e}\left[\left(\tau_{\mathfrak{F}^N}<\tau_{B_{\delta}^N}\right)\cap \left( \tau_{B_{\delta}^N}\geq  T_{\delta}\right)  \right]\\
    &\leq \Tilde{\mathbb{P}}_{\delta_\e}\left[\tau_{\mathfrak{F}^N}\leq T_{\delta} \right] + \Tilde{\mathbb{P}}_{\delta_\e}\left[ \tau_{B_{\delta}^N}\geq  T_{\delta}  \right].
\end{align*}
Using that
$\underset{N \rightarrow \infty}{\overline{\lim}}~ \frac{1}{N}\log(a_N+b_N) \leq \max\left(\underset{N \rightarrow \infty}{\overline{\lim}}~ \frac{1}{N}\log a_N, \underset{N \rightarrow \infty}{\overline{\lim}}~ \frac{1}{N}\log b_N \right) $
and \eqref{omega}, we are left with
$$\underset{N \rightarrow \infty}{\overline{\lim}}~ \frac{1}{N}\log \sup_{\e\in R_{\delta}^N}\Tilde{\mathbb{P}}_{\delta_\e}\left[\tau_{\mathfrak{F}^N}<\tau_{B_{\delta}^N} \right] \leq \max\left(-\underset{m\in \mathscr{F}}{\inf}V(m),\underset{N \rightarrow \infty}{\overline{\lim}}~ \frac{1}{N}\log \sup_{\e\in R_{\delta}^N}\Tilde{\mathbb{P}}_{\delta_\e}\left[\tau_{\mathfrak{F}^N}<T_{\delta} \right] \right) $$
and so it is enough to show that
$$\underset{N \rightarrow \infty}{\overline{\lim}}~ \frac{1}{N}\log \sup_{\e\in R_{\delta}^N}\Tilde{\mathbb{P}}_{\delta_\e}\left[\tau_{\mathfrak{F}^N}<T_{\delta} \right] \leq -\underset{m\in \mathscr{F}}{\inf}V(m)+ \varepsilon.$$
As $R_{\delta}^N $ is finite, there is a configuration $\e^N\in R_{\delta}^N $ such that
$$\sup_{\e\in R_{\delta}^N}\Tilde{\mathbb{P}}_{\delta_\e}\left[\tau_{\mathfrak{F}^N}<T_{\delta} \right]=\Tilde{\mathbb{P}}_{\delta_{\e^N}}\left[\tau_{\mathfrak{F}^N}<T_{\delta} \right]\leq ~ \Tilde{\mathbb{P}}_{\delta_{\e^N}}\left[\widehat{m}(\pi^N)\in \mathscr{F}_{\delta} \right],  $$
where $\mathscr{F}_{\delta}$ is the closed set of elements in $\mathcal{D}_{[0,1]}^{T_{\delta}}$ such that for $a$ in that set, there is $t\in [0,T_{\delta}]$ such that $a(t)$ or $a(t-)$ is in $\mathscr{F}$. By compactness of $\mathcal{M}$, $\pi^N(\e^N)$ contains a subsequence which converges to an element $\rho_{\delta}$ in $\mathcal{M}_0$. The continuity of the map $\widehat{m}$ ensures that $\widehat{m}(\rho_{\delta})\in R_{\delta}$. Along that converging subsequence, using the dynamical large deviation principle for the total mass, one has:
$$\underset{N \rightarrow \infty}{\overline{\lim}}~\frac{1}{N} \log \Tilde{\mathbb{P}}_{\delta_{\e^N}}\left[\widehat{m}(\pi^N)\in \mathscr{F}_{\delta} \right]\leq - \inf_{a\in \mathscr{F}_{\delta}}I_{T_{\delta}}\left(a|\widehat{m}(\rho_{\delta}) \right).  $$
Finally, we show that there is a $\delta>0$, such that
$$\inf_{a\in \mathscr{F}_{\delta}}I_{T_{\delta}}\left(a|\widehat{m}(\rho_{\delta}) \right) \geq \inf_{m\in \mathscr{F}}V(m)-\varepsilon. $$
Assume that this is not true so that for any $k>0$, there is $a^k\in \mathscr{F}_{\frac{1}{k}}\subset \mathcal{D}_{[0,1]}^{T_{\frac{1}{k}}}$ such that
$$I_{T_{\frac{1}{k}}}\left(a^k\big|\widehat{m}(\rho_{\frac{1}{k}}) \right) < \inf_{m\in \mathscr{F}}V(m)-\varepsilon. $$
Then, for any $k>0$, there is $0<T'_k\leq T_{\frac{1}{k}}$ such that $a^k(T'_k)\in \mathscr{F}$ or $a^k(T'_k-)\in \mathscr{F}$. Without loss of generality, we assume that $a^k(T'_k)\in \mathscr{F}$. We have
\begin{equation}\label{lhs}
    \inf_{m\in \Tilde{\mathscr{F}}}V(m) \leq V\left(a^k\left(T'_k\right)\right) = \inf_{\underset{u(0)=\gamma,~ a(T)=a^k(T'_k)}{a, T>0}}I_T\left(a|\gamma\right) \leq ~ I_{T'_k+1}\left(\Tilde{a}^k|\gamma\right) 
\end{equation}
where $\Tilde{a}^k$ is defined on $ [0,T'_k+1]$ by
\begin{equation} 
   \Tilde{a}^k= \left\{
    \begin{array}{ll}
        (1-t)\gamma + t\widehat{m}(\rho_{\frac{1}{k}}),~ \text{if}~ t\in [0,1]\\\\
        a^k(t-1),~ ~\text{if}~ t\in [1,T'_k+1].
        \end{array}
\right.
\end{equation}
Then, by \eqref{DecompoI},
\begin{equation}
    \begin{split}
        I_{T'_k+1}\left(\Tilde{a}^k|\gamma\right)&= I_1\left((1-t)\gamma + t\widehat{m}(\rho_{1/N})|\gamma\right)+  I_{T'_k}\left(a^k|\widehat{m}(\rho_{\frac{1}{k}})\right)\\
        &\leq I_1\left(\left(\widehat{m}(\rho_{\frac{1}{k}})-\gamma \right)t|\gamma\right)+ \inf_{m\in \mathscr{F}}V(m)-\varepsilon,
    \end{split}
\end{equation}
where we used that $T'_k\leq T_{\frac{1}{k}}$, so $I_{T'_k}\left(a^k|\widehat{m}(\rho_{\frac{1}{k}})\right)\leq I_{T_{\frac{1}{k}}}\left(a^k|\widehat{m}(\rho_{\frac{1}{k}})\right).$ 

By Lemma \ref{ControlI1}, for $k>0$ large enough, $I_1\left(\left(\widehat{m}(\rho_{\frac{1}{k}})-\gamma \right)t|\gamma\right)< \varepsilon$, so we are left with
$$\inf_{m\in \mathscr{F}}V(m) \leq V\left(a^k\left(T'_k\right)\right)< ~ \varepsilon + \inf_{m\in \mathscr{F}}V(m)-\varepsilon= \inf_{m\in \mathscr{F}}V(m) $$
which is a contradiction.

\end{itemize}
\end{proof}

To prove Lemma \ref{Lemsecond}, we use the following

\begin{lem}\label{interm}
    For every $\delta>0$, there is a $T>0$ such that
    $$\inf\left\{I_T(a),~ a\in \mathcal{D}_{[0,1]}^T~ \text{and}~ a(T)\notin ]\gamma-\delta,\gamma+ \delta[  \right\}>0. $$
\end{lem}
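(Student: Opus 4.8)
The plan is to use the characterization of zero-cost trajectories from Corollary \ref{Strongsol}: $I_T(a)=0$ forces $a$ to be the strong solution of $\partial_t a=-2(a-\gamma)$, hence $a(t)=\gamma+(a(0)-\gamma)e^{-2t}$. Since $|a(0)-\gamma|\le 1$ for every $a(0)\in[0,1]$, I would first fix $T>0$ with $e^{-2T}<\delta$. Then any trajectory of zero cost satisfies $|a(T)-\gamma|=|a(0)-\gamma|\,e^{-2T}\le e^{-2T}<\delta$, i.e. $a(T)\in\,]\gamma-\delta,\gamma+\delta[$; so no trajectory with $a(T)\notin\,]\gamma-\delta,\gamma+\delta[$ has zero cost. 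The real content is that the infimum over this family stays bounded away from $0$, and this is where compactness of the level sets enters.

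Write $q:=\inf\{I_T(a):a\in\mathcal{D}_{[0,1]}^T,\ a(T)\notin\,]\gamma-\delta,\gamma+\delta[\}$. If $q=+\infty$ there is nothing to prove, so assume $q<+\infty$ and take a minimizing sequence $(a_n)_{n\ge1}$ with $I_T(a_n)\to q$; for $n$ large, $I_T(a_n)\le q+1$, so the $a_n$ lie in the level set $E_{q+1}$, which by Lemma \ref{Levelset} and Remark \ref{uniformconv} is compact in $\mathcal{C}([0,T])$ for uniform convergence. Extract a subsequence $a_{n_k}\to a^\star$ uniformly on $[0,T]$. Uniform convergence gives $a_{n_k}(T)\to a^\star(T)$, and since $\{x\in[0,1]:|x-\gamma|\ge\delta\}$ is closed and contains each $a_{n_k}(T)$, it contains $a^\star(T)$. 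By lower semicontinuity of $I_T$ (uniform convergence implies convergence in $\mathcal{D}_{[0,1]}^T$), $I_T(a^\star)\le\liminf_k I_T(a_{n_k})=q$; as $a^\star$ is admissible, $I_T(a^\star)=q$.

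It remains to exclude $q=0$. If $q=0$ then $I_T(a^\star)=0$, so by Corollary \ref{Strongsol} $a^\star$ solves $\partial_t a=-2(a-\gamma)$ on $[0,T]$ and $|a^\star(T)-\gamma|=|a^\star(0)-\gamma|\,e^{-2T}\le e^{-2T}<\delta$, contradicting $|a^\star(T)-\gamma|\ge\delta$. Hence $q>0$, which is the assertion. The only points requiring a little care are that the infimum is attained — which is exactly what compactness of the level sets buys us — and the identification of zero-cost paths with solutions of the relaxation ODE; both are already in hand from the excerpt, so I do not expect a substantive obstacle.
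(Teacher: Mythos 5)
Your proof is correct and follows essentially the same route as the paper's: compactness of the level sets plus lower semicontinuity to pass to a zero-cost (or cost-$q$) limit of a minimizing sequence, Corollary \ref{Strongsol} to identify zero-cost paths with solutions of the relaxation ODE, and the explicit decay $|a(T)-\gamma|\le e^{-2T}$ to reach a contradiction. The only difference is organizational — you fix $T$ with $e^{-2T}<\delta$ up front and argue directly, whereas the paper argues by contradiction over all $T$ — and your version is, if anything, slightly cleaner.
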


\begin{proof}
Suppose that the result is not true and consider $\delta>0$ such that for any $T>0$
  $\underset{a\in D_{\delta,T}}{\inf} I_T(a)=0,$ where $$D_{\delta,T}= \left\{a\in \mathcal{D}_{[0,1]}^T~ \text{and}~ a(T)\notin ]\gamma-\delta,\gamma+ \delta[ \right\}.   $$ 
  For $T>0$ consider $(a^{T,k})_{k\geq 0}$ a sequence in $D_{\delta,T}$ such that $I(a^{T,k}) \rightarrow 0$. Recall that by Lemma \ref{Levelset} and Remark \ref{uniformconv}, the level sets are compact for the uniform convergence topology so there is $a^T\in \mathcal{C}([0,T]) $ a limit of a strong converging subsequence of $(a^{T,k})_{k\geq 0}$ and, by lower semi-continuity of $I_T$, $I_T(a^T)=0$. By Corollary \ref{Strongsol}, $a^T$ is therefore a strong solution of $\partial_t a = -2(a -\gamma)$ and $a^T(T) \underset{T\rightarrow \infty}{\rightarrow} \gamma$. The strong convergence of $a^{T,k}$ to $a$ also implies that $|a^T(T)-\gamma| \geq \delta$ which yields a contradiction when $T$ is large enough.
\end{proof}

\begin{proof}\textit{(Lemma \ref{Lemsecond})}
    Fix a $\delta>0$. To prove the result, it is enough to show that there is $T_0,C_0$ and an $N_0>0$ such that for every $N>N_0$,
$$\underset{\e\in \Omega_N}{\sup}~ \Tilde{\mathbb{P}}_{\delta_\e}\left[ \tau_{B_{\delta}^N}\geq T_0 \right]\leq \exp(-C_0N).$$
Indeed, using induction and the strong Markov property, one then gets the desired result. Let us show that there is $T_0,C_0>0$ such that 
$$\underset{N \rightarrow \infty}{\overline{\lim}}~ \frac{1}{N}\log \underset{\e\in \Omega_N}{\sup}~ \Tilde{\mathbb{P}}_{\delta_\e}\left[ \tau_{B_{\delta}^N}\geq T_0 \right]\leq  -C_0.$$ Consider $\e^N\in \Omega_N$ such that
$$\underset{\e\in \Omega_N}{\sup}~ \Tilde{\mathbb{P}}_{\delta_\e}\left[ \tau_{B_{\delta}^N}\geq T_0 \right]= \Tilde{\mathbb{P}}_{\delta_{\e^N}}\left[ \tau_{B_{\delta}^N}\geq T_0 \right]. $$
We have, for $\Tilde{T}_0<T_0$,
\begin{equation*}
    \begin{split}
        \Tilde{\mathbb{P}}_{\delta_{\e^N}}\left[ \tau_{B_{\delta}^N}\geq T_0 \right] &\leq \Tilde{\mathbb{P}}_{\delta_{\e^N}}\left[ \tau_{B_{\delta}^N}> \Tilde{T}_0 \right]\\
        &\leq \Tilde{\mathbb{P}}_{\delta_{\e^N}}\left[ m(\pi^N)\in D_{\delta,\Tilde{T}_0}\right].
    \end{split}
\end{equation*}
By compactness of $\mathcal{M}$ and the fact that each configuration in $\Omega_N$ has at most one particle per site, $\pi^N(\e^N)$ contains a subsequence converging to some $\rho$ in $\mathcal{M}_0$. By continuity of $\widehat{m}$ and the fact that $D_{\delta,\Tilde{T}_0}$ is closed, using the dynamical large deviations principle we get, up to some extraction,
\begin{equation}
\begin{split}
        \underset{N \rightarrow \infty}{\overline{\lim}}~ \frac{1}{N}\log\Tilde{\mathbb{P}}_{\delta_{\e^N}}\left[ \tau_{B_{\delta}^N}\geq T_0 \right] &= \underset{N \rightarrow \infty}{\overline{\lim}}~ \frac{1}{N}\log\Tilde{\mathbb{P}}_{\delta_{\e^N}}\left[\widehat{m}(\pi^N)\in  D_{\delta,\Tilde{T}_0}\right]\\
        &\leq - \underset{a\in \mathcal{D}_{\gamma} }{\inf}~ I_{\Tilde{T}_0}(a|\rho).
\end{split}
\end{equation}
We then conclude thanks to Lemma \ref{interm}, by taking $\Tilde{T}_0$ large enough.
\end{proof}

\appendix
\section{Proof of \eqref{final} } \label{Appendice}
Let us compute $J_{T_1,H}(a^*(T_1-.))$:
\begin{equation*}
    \begin{split}
        J_{T_1,H}(a^*(T_1-.))= mH(T_1) - a^*(T_1)H(0) - \int_0^{T_1}a_s\partial_s H(s)ds - \int_0^{T_1}A_H(a)(s)ds.
    \end{split}
\end{equation*}
On the one hand,
\begin{equation*}
    \int_0^{T_1}a_s\partial_s H(s)ds = \int_0^{T_1}\frac{2(a(s)-\gamma)}{1-a(s)}ds = \int_0^{T_1}\frac{\partial_s a_s}{1-a_s}ds = -\left[\log\left(1-a(s)\right) \right]_0^{T_1}.
\end{equation*}
On the other hand,
\begin{equation*}
    A_H(a)(s) = 2\gamma\left[\frac{v(s)}{w(s)} + \frac{w(s)}{v(s)} \right]a(s) + 2(1-2\gamma)a(s) - 2\frac{w(s)}{v(s)}a(s) - 2\gamma \left(\frac{v(s)-w(s)}{w(s)}\right),
\end{equation*}
and 
$$2\gamma\int_0^{T_1}\frac{v(s)-w(s)}{w(s)}ds = 2\gamma \int_0^{T_1}\frac{2a(s)-2\gamma}{2\gamma(1- a(s))}ds =-\left[\log\left(1-a(s)\right) \right]_0^{T_1}. $$
We are left with\begin{equation}\label{sum}
    \begin{split}
        J_{H,T_1}(a^*(T_1-.))&= mH(T_1) - u^*(T_1)H(0) + 2\left[\log\left(1-u(s)\right) \right]_0^{T_1}\\
        &+ 2\gamma\int_0^{T_1} \frac{a(s)}{b(s)}u(s)ds + \left[2\gamma-2 \right]\int_0^{T_1}\frac{b(s)}{a(s)}u(s)ds + 2(1-2\gamma)\int_0^{T_1}u(s)ds.
    \end{split}
\end{equation}
As $\frac{a(s)}{v(s)}= \frac{1}{2-2\gamma}$,
$$2\left[\gamma-1 \right]\int_0^{T_1}\frac{w(s)}{v(s)}a(s)ds = 2\gamma \int_0^{T_1}a(s)ds - 2\gamma T_1, $$
so
\begin{equation}\label{2+3}
    2\left[\gamma-1 \right]\int_0^{T_1}\frac{w(s)}{v(s)}a(s)ds + 2(1-\gamma)\int_0^{T_1}a(s)ds=2\left[1-\gamma\right] \int_0^{T_1}a(s)ds - 2\gamma T_1.
\end{equation}
We also have:
$\frac{v(s)a(s)}{w(s)} = 2\left[1-\gamma\right]\frac{a^2(s)}{1-a(s)} = 2\frac{a(s) - \gamma}{1-a(s)}- 2\left[1-\gamma\right]a(s) + 2\gamma.$
Therefore,
\begin{equation}\label{1}
    2\gamma\int_0^{T_1} \frac{v(s)}{w(s)}a(s)ds = -\left[\log\left(1-a(s)\right) \right]_0^{T_1}-2\left[1-\gamma \right]\int_0^{T_1}a(s)ds + 2\gamma T_1.
\end{equation}
Collecting \eqref{sum}, \eqref{2+3} and \eqref{1} we are left with
\begin{equation}
    J_{T_1,H}(a^*(T_1-.))= m\log\left( \frac{(1-\gamma)m}{\gamma(1-m)}\right)- a^*(T_1)\log\left( \frac{(1-\gamma)a^*(T_1)}{\gamma(1-a^*(T_1))}\right) + \log\left( \frac{1-m}{1-\gamma}\right).
\end{equation}

\bibliographystyle{plain}
\bibliography{biblio.bib}

\end{document}